\newtheorem{theorem}{Theorem}[section]
\newtheorem{prop}[theorem]{Proposition}
\newtheorem{lem}[theorem]{Lemma}
\newtheorem{corol}[theorem]{Corollary}
\newtheorem{prob}[theorem]{Problem}
\theoremstyle{definition}
\newtheorem{defi}[theorem]{Definition}
\newtheorem{rmq}[theorem]{Remark}
\newtheorem{exmp}[theorem]{Example}
\def\Gr{{\rm{Gr}}}
\def\pr{{\rm{pr}}}
\def\Mut{{\rm{Mut}\,}}
\def\dim{{\rm{dim}\,}}
\def\ens#1{\left\{ #1 \right\}}
\def\fl{{\longrightarrow}\,}
\def\CC{{\mathcal{C}}}
\def\C{{\mathbb{C}}}
\def\Q{{\mathbb{Q}}}
\def\Z{{\mathbb{Z}}}
\def\id{\1}
\def\op{{\rm{op}}}
\def\ens#1{\left\{ #1 \right\}}
\def\Ext{{\rm{Ext}}}
\def\End{{\rm{End}}}
\def\Hom{{\rm{Hom}}}
\def\d{{\partial}}
\def\im{{\rm{Im}}}
\def\ker{{\rm{Ker}}}
\def\res{{\rm{Res}}}
\def\1{\mathbbm{1}}
\def\k{{\mathbbm{k}}}
\def\x{{\mathbf x}}
\def\ex{{\mathbf{ex}}}
\def\y{{\mathbf y}}
\def\am{\mathbf{Clus}}
\def\amr{\am}
\def\Ann{{\mathbf{Ring}}}
\def\AM#1{{\bf{(CM#1)}}}
\title{On a category of cluster algebras}
\author{Ibrahim Assem, Gr\'egoire Dupont and Ralf Schiffler}
\address{Universit\'e de Sherbrooke, Sherbrooke QC, Canada
}
\email{ibrahim.assem@usherbrooke.ca}
\address{Institut de Math\'ematiques de Jussieu-Paris Rive Gauche, Paris, France
}
\email{dupontg@math.jussieu.fr}
\address{University of Connecticut, Storrs CT, USA
}
\email{schiffler@math.uconn.edu}
\date{\today}
\begin{document}

\begin{abstract}
	We introduce a category of cluster algebras with fixed initial seeds. This category has countable coproducts, which can be constructed combinatorially, but no products. We characterise isomorphisms and monomorphisms in this category and provide combinatorial methods for constructing special classes of monomorphisms and epimorphisms. In the case of cluster algebras from surfaces, we describe interactions between this category and the geometry of the surfaces.
\end{abstract}

\maketitle

\setcounter{tocdepth}{1}
\tableofcontents

\section*{Introduction}
	Cluster algebras are particular commutative algebras which were introduced by Fomin and Zelevinsky in \cite{cluster1} in order to provide a combinatorial framework for studying total positivity and canonical bases in algebraic groups. Since then, a fast-growing literature focused on the numerous interactions of these algebras with various areas of mathematics like Lie theory, Poisson geometry, representation theory of algebras or mathematical physics. The study of the cluster algebras as algebraic structures in themselves can essentially be found in the seminal series of articles \cite{cluster1,cluster2,cluster3,cluster4} and their ring-theoretic properties were recently studied in \cite{GLS:factorial}. 

	For a long time, an obstacle to the good understanding of cluster algebras was that they are defined \emph{recursively} by applying a combinatorial process called \emph{mutation}. However, the interactions of cluster algebras with either the (combinatorial) Teichmüller theory or the representation theory of algebras led to various closed formulae which enlightened the structure of cluster algebras, see \cite{MSW:positivity, DWZ:potentials2,Plamondon:ClusterAlgebras,HL:quantumaffinecluster}.

	In order to get a better comprehension of cluster algebras, the next step is thus to define a categorical framework for their study. Therefore, one has to find what are the ``right'' morphisms between cluster algebras. The most natural idea is to look at ring homomorphisms which commute with mutations. For bijective morphisms from a coefficient-free skew-symmetric cluster algebra to itself, this gave rise to the notion of \emph{cluster automorphisms}, introduced in \cite{ASS:automorphisms}. However, in more general settings this idea turns out to be slightly too restrictive in order to get enough morphisms between non-isomorphic cluster algebras.

	In this article, we slightly relax this first idea and propose a similar definition of cluster morphism between arbitrary skew-symmetrisable cluster algebras of geometric type (with non-invertible coefficients). The definition is still based on the idea that morphisms between cluster algebras should commute with mutations but, broadly speaking, we also allow the morphisms to specialise certain cluster variables to integral values or to send frozen variables to exchangeable ones (see Definition \ref{defi:main}).

	With this notion of morphisms, we obtain a category $\am$ with countable coproducts (Lemma \ref{lem:sommes}) but generally without products (Proposition \ref{prop:products}). We prove that  in $\am$, the isomorphisms are the bijective morphisms (Corollary \ref{corol:iso}), the monomorphisms are the injective monomorphisms (Proposition \ref{prop:mono}) while the epimorphisms are not necessarily surjective (Remark \ref{rmq:cexepi}). 

	Inspired by the interactions between geometry and the combinatorics of cluster algebras associated with surfaces in the sense of \cite{FST:surfaces}, we define for arbitrary cluster algebras concepts of \emph{gluings} and \emph{cuttings} which provide natural classes of monomorphisms and epimorphisms in $\am$ (see Sections \ref{section:sums} and \ref{section:surgery}). 

	We also study specialisations of cluster variables in this categorical context. We prove that the usual specialisations of \emph{frozen} variables to 1 yield epimorphisms in $\am$. More surprisingly, for cluster algebras from surfaces or for acyclic cluster algebras, we prove that specialisations of exchangeable cluster variables also give rise to epimorphisms in $\am$ (Theorems \ref{theorem:specialisationsurfaces} and \ref{theorem:acyclicspe}). 

 	The article is organised as follows. After a brief section recalling our conventions, Section \ref{section:rootedclalg} recalls the principal definitions on cluster algebras which we will use along the article. In Section \ref{section:amr}, we introduce the notion of \emph{rooted cluster morphisms} and the category $\amr$ and we establish some basic properties. Section \ref{section:iso} is devoted to the study of isomorphisms in $\amr$, generalising previous results of \cite{ASS:automorphisms} on cluster automorphisms. Section \ref{section:mono} is devoted to the study of monomorphisms in $\amr$. Section \ref{section:sums} concerns the study of products and coproducts in $\amr$ and their interactions with the geometry of surfaces. Section \ref{section:epi} is devoted to the study of epimorphisms in $\amr$ and gives rise to a combinatorial process called \emph{surgery}, whose interactions with the geometry of surfaces are studied in Section \ref{section:surgery}.

\section*{Notations}\label{section:notations}
	In this article, every ring $A$ has an identity $1_A$ and every ring homomorphism $A \fl B$ is assumed to send the identity $1_A$ to the identity $1_B$. We denote by $\Ann$ the category of rings with ring homomorphisms. 

	If $I$ is a countable set, we denote by $M_I(\Z)$ the ring of locally finite matrices with integral entries indexed by $I \times I$ (we recall that a matrix $B = (b_{ij})_{i,j \in I}$ is \emph{locally finite} if for every $i \in I$, the families $(b_{ij})_{j \in I}$ and $(b_{ji})_{j \in I}$ have finite support). We say that $B$ is \emph{skew-symmetrisable} if there exists a family of non-negative integers $(d_i)_{i \in I}$ such that $d_i b_{ij} = - d_j b_{ji}$ for any $i,j \in I$. If $J \subset I$, we denote by $B[J]=(b_{ij})_{i,j \in J}$ the submatrix of $B$ formed by the entries labelled by $J \times J$.

	If $I$ and $J$ are sets, we use the notation $I \setminus J = \ens{i \in I \ | \ i \not \in J}$ independently of whether $J$ is contained in $I$ or not. By a \emph{countable} set we mean a set of cardinality at most $\aleph_0$.

	If $R$ is a subring of a ring $S$, and if $\mathcal X \subset S$ is a set, we denote by $R[\mathcal X]$ the ring of all polynomials with coefficients in $R$ evaluated in the elements of $S$. Note that this is not necessarily isomorphic to a ring of polynomials.

	We recall that a \emph{concrete} category is a category whose objects have underlying sets and whose morphisms between objects induce maps between the corresponding sets.

	In order to make some statements clearer, it might be convenient for the reader to use a combinatorial representation of the skew-symmetrisable locally finite matrices as \emph{valued quivers}. If $B \in M_I(\Z)$ is a locally finite skew-symmetric matrix, we associate with $B$ a valued quiver $Q_B$ whose points are indexed by $I$ and such that for any $i,j \in I$, if $b_{ij}>0$ (so that $b_{ji}<0$), then we draw a valued arrow 
	$$\xymatrix{ i \ar[rr]^{(b_{ij},-b_{ji})} && j}$$
	in $Q_B$. As $B$ is skew-symmetrisable the valued quiver $Q_B$ has no oriented cycles of length at most two. In the case where $B$ is skew-symmetric, if $i,j \in I$ are such that $b_{ij}>0$, we usually draw $b_{ij}$ arrows from $i$ to $j$ in $Q_B$ instead of a unique arrow with valuation $(b_{ij},-b_{ji})$.

\section{Rooted cluster algebras}\label{section:rootedclalg}
	In this section we recall the definition of a cluster algebra of geometric type. As opposed to the initial definition formulated in \cite{cluster1}, we consider \emph{non-invertible} coefficients.

	\subsection{Seeds and mutations}
		\begin{defi}[Seeds]
			A \emph{seed} is a triple $\Sigma = (\x, \ex, B)$ such that~:
			\begin{enumerate}
				\item $\x$ is a countable set of undeterminates over $\Z$, called the \emph{cluster} of $\Sigma$~;
				\item $\ex \subset \x$ is a subset of $\x$ whose elements are the \emph{exchangeable variables} of $\Sigma$~;
				\item $B =(b_{xy})_{x,y \in \x} \in M_{\x}(\Z)$ is a (locally finite) skew-symmetrisable matrix, called the \emph{exchange matrix} of $\Sigma$.
			\end{enumerate}
		\end{defi}

		The variables in $\x \setminus \ex$ are the \emph{frozen variables} of $\Sigma$. A seed $\Sigma=(\x,\ex,B)$ is \emph{coefficient-free} if $\ex=\x$ and in this case, we simply write $\Sigma=(\x,B)$. A seed is \emph{finite} if $\x$ is a finite set. 

		Given a seed $\Sigma$, the field $\mathcal F_{\Sigma} = \Q(x\ | \ x \in \x)$ is called the associated \emph{ambient field}.

		\begin{defi}[Mutation]
			Given a seed $\Sigma = (\x, \ex, B)$ and an exchangeable variable $x \in \ex$, the image of the \emph{mutation} of $\Sigma$ in the direction $x$ is the seed $$\mu_x(\Sigma) = (\x',\ex',B')$$ given by
			\begin{enumerate}
				\item $\x' = (\x \setminus \ens{x}) \sqcup \ens{x'}$ where
					$$xx' = \prod_{\substack{y \in \x~; \\ b_{xy}>0}} y^{b_{xy}} + \prod_{\substack{y \in \x~; \\ b_{xy}<0}} y^{-b_{xy}}.$$
				\item $\ex'=(\ex \setminus \ens{x}) \sqcup \ens{x'}$.
				\item $B'=(b'_{yz}) \in M_{\x}(\Z)$ is given by
					$$b'_{yz} = \left\{\begin{array}{ll}
						- b_{yz} & \textrm{ if } x=y \textrm{ or } x=z~; \\
						b_{yz} + \frac 12 (|b_{yx}|b_{xz} + b_{yx}|b_{xz}|) & \textrm{ otherwise.}
					\end{array}\right.$$
			\end{enumerate}
		\end{defi}
		
		For any $y \in \x$ we denote by $\mu_{x,\Sigma}(y)$ the variable corresponding to $y$ in the cluster of the seed $\mu_x(\Sigma)$, that is, $\mu_{x,\Sigma}(y)=y$ if $y \neq x$ and $\mu_{x,\Sigma}(x)=x'$ where $x'$ is defined as above. If there is no risk of confusion, we simply write $\mu_x(y)$ instead of $\mu_{x,\Sigma}(y)$.

		The set $\x'$ is again a free generating set of $\mathcal F_{\Sigma}$ and the mutation is involutive in the sense that $\mu_{x'} \circ \mu_x(\Sigma)=\Sigma$, for each $x \in \ex$.

		\begin{defi}[Admissible sequence of variables]
			Let $\Sigma=(\x,\ex,B)$ be a seed. We say that $(x_1, \ldots, x_l)$ is \emph{$\Sigma$-admissible} if $x_1$ is exchangeable in $\Sigma$ and if, for every $i \geq 2$, the variable $x_i$ is exchangeable in $\mu_{x_{i-1}} \circ \cdots \circ \mu_{x_1}(\Sigma)$.
		\end{defi}

		Given a seed $\Sigma=(\x,\ex,B)$, its \emph{mutation class} is the set $\Mut(\Sigma)$ of all seeds which can be obtained from $\Sigma$ by applying successive mutations along finite admissible sequences of variables. In other words, 
		$$\Mut(\Sigma) = \ens{\mu_{x_{n}} \circ \cdots \circ \mu_{x_1}(\Sigma) \ | \ n>0 \text{ and } (x_1, \ldots, x_n) \textrm{ is $\Sigma$-admissible}}.$$
		Two seeds in the same mutation class are \emph{mutation-equivalent}.

	\subsection{Rooted cluster algebras}
		\begin{defi}[Rooted cluster algebra]
			Let $\Sigma$ be a seed. The \emph{rooted cluster algebra with initial seed} $\Sigma$ is the pair $(\Sigma,\mathcal A)$ where $\mathcal A$ is the $\Z$-subalgebra of $\mathcal F_{\Sigma}$ given by~:
			$$\mathcal A = \mathcal A(\Sigma) = \Z\left[x \ | \ x \in \bigcup_{(\x,\ex,B) \in \Mut(\Sigma)} \x\right] \subset \mathcal F_{\Sigma}.$$
			The variables (exchangeable variables and frozen variables respectively) arising in the clusters of seeds which are mutation-equivalent to $\Sigma$ are the \emph{cluster variables} (or the \emph{exchangeable variables} and \emph{frozen variables} respectively) of the rooted cluster algebra $(\Sigma, \mathcal A)$. We denote by $\mathcal X_\Sigma$ the set of cluster variables in $(\Sigma,\mathcal A)$.
		\end{defi}

		In order to simplify notations, a rooted cluster algebra $(\Sigma,\mathcal A)$ is in general simply denoted by $\mathcal A(\Sigma)$ but one should keep in mind that a rooted cluster algebra is always viewed together \emph{with} its initial seed.

		\begin{rmq}
			This definition authorises seeds whose clusters are empty. Such seeds are called \emph{empty seeds} and by convention the rooted cluster algebra corresponding to an empty seed is $\Z$.
		\end{rmq}

		\begin{exmp}
			If $\Sigma=(\x,\emptyset,B)$ has no exchangeable variables, then $$\mathcal A(\Sigma) = \Z[x \ | \ x \in \x]$$ is a polynomial ring in countably many variables.
		\end{exmp}

		\begin{rmq}
			In the original definition of cluster algebras given in \cite{cluster1}, the frozen variables are supposed to be invertible in the cluster algebra. However, it is known that cluster variables in a cluster algebra of geometric type are Laurent polynomials in the exchangeable variables with polynomial coefficients in the frozen ones (see for instance \cite[Proposition 11.2]{cluster2}). Therefore, the cluster algebra structure can essentially be considered without inverting the coefficients. Also, several ``natural'' examples of cluster algebras arise with non-invertible coefficients, as for instance cluster algebras arising in Lie theory as polynomial rings (see \cite[\S 6.4]{GLS:factorial}) or cluster structures on rings of homogeneous coordinates on Grassmannians (see \cite[\S 2.1]{GSV:book} or Section \ref{ssection:Gr}).

			Of course, if one wants to recover the initial definition from ours, it is enough to localise the cluster algebra at the frozen variables. Nevertheless, some of our results (in particular the crucial Lemma \ref{lem:injfrozen}) require that frozen variables be non-invertible.
		\end{rmq}

	\subsection{Rooted cluster algebras from surfaces}\label{ssection:surfaces}
		In this article, we are often interested in the particular class of rooted cluster algebras associated with marked surfaces in the sense of \cite{FST:surfaces}. We recall that a \emph{marked surface} is a pair $(S,M)$ where $S$ is an oriented 2-dimensional Riemann surface and $M$ is a finite set of marked points in the closure of $S$ such that each connected component of the boundary $\d S$ of the surface $S$ contains at least one marked point in $M$. We also assume that none of the connected components of $(S,M)$ is a \emph{degenerate marked surface}, that is, a surface which is homeomorphic to one of the following surfaces~: 
		\begin{itemize}
			\item a sphere with one, two or three punctures,
			\item an unpunctured or a once-punctured monogon,
			\item an unpunctured digon or an unpunctured triangle.
		\end{itemize}

		All curves in $(S,M)$ are considered up to isotopy with respect to the set $M$ of marked points. Therefore, two curves $\gamma$ and $\gamma'$ are called \emph{distinct} if they are not isotopic. Two curves $\gamma$ and $\gamma'$ are called \emph{compatible} if there exist representatives of their respective isotopy classes which do not intersect in $S \setminus M$.

		An \emph{arc} is a curve joining two marked points in $(S,M)$ which is compatible with itself. An arc is a \emph{boundary arc} if it is isotopic to a connected component of $\d S \setminus M$, otherwise it is \emph{internal}.

		A \emph{triangulation} of $(S,M)$ is a maximal collection of arcs which are pairwise distinct and compatible. The arcs of the triangulation cut the surface into triangles (which may be \emph{self-folded}). 

		With any triangulation $T$ of $(S,M)$ we can associate a skew-symmetric matrix $B^T$ as in \cite{FST:surfaces}. For the convenience of the reader, we recall this construction in the case where $T$ has no self-folded triangles (for the general case we refer the reader to \cite[\S 4]{FST:surfaces}). For any triangle $\Delta$ in $T$, we define a matrix $B^\Delta$, indexed by the arcs in $T$ and given by 
		$$B^\Delta_{\gamma,\gamma'} 
		= \left\{\begin{array}{ll}
			1 & \text{ if $\gamma$ and $\gamma'$ are sides of $\Delta$ and $\gamma'$ follows $\gamma$ in the positive direction~;}\\
			-1 & \text{ if $\gamma$ and $\gamma'$ are sides of $\Delta$ and $\gamma'$ follows $\gamma$ in the negative direction~;}\\
			0 & \text{ otherwise.}
		\end{array}\right.$$
		The matrix $B^T$ is then given by
		$$B^T = \sum_{\Delta} B^\Delta$$
		where $\Delta$ runs over all the triangles in $T$. 

		In terms of quivers, the quiver $Q_T$ corresponding to $B^T$ is the quiver such that~:
		\begin{itemize}
			\item the points in $Q_T$ are the arcs of $T$,
			\item the frozen points in $Q_T$ are the boundary arcs of $T$,
			\item there is an arrow $\gamma \fl \gamma'$ if and only if $\gamma$ and $\gamma'$ are sides of a same triangle and $\gamma'$ follows $\gamma$ in the positive direction,
			\item a maximal collection of 2-cycles is removed.
		\end{itemize}

		\begin{exmp}
			Figure \ref{fig:quivertriangulation} shows an example of quiver obtained from a triangulation of a hexagon. Points in white correspond to frozen variables and points in black correspond to exchangeable variables. The dashed arrows, joining frozen points, are precisely those which we remove in the \emph{simplification of the seed} (see Definition \ref{defi:simplification}).
			\begin{figure}[htb]
				\begin{center}
					\begin{tikzpicture}
						\draw[thick] (0,0) -- ++(1,1.7) -- ++(-1,1.7) -- ++ (-2,0) -- ++(-1,-1.7) -- ++(1,-1.7) -- cycle;

						\draw[] (-2,0) -- (-2,3.4);
						\draw[] (-2,0) -- (1,1.7);
						\draw[] (1,1.7) -- (-2,3.4);

						\draw[-triangle 60,red] (-1,0) -- (-.5,.85);
						\draw[-triangle 60,red] (-.5,.85) -- (.5,.85);
						\draw[-triangle 60,red,dashed] (.5,.85) -- (-1,0);

						\draw[-triangle 60,red] (-.5,.85) -- (-2,1.7);
						\draw[-triangle 60,red] (-2,1.7) -- (-.6,2.6);
						\draw[-triangle 60,red] (-.6,2.6) -- (-.5,.85);

						\draw[-triangle 60,red]  (-2.5,2.55) -- (-2,1.7);
						\draw[-triangle 60,red,dashed] (-2.5,.85) -- (-2.5,2.55);
						\draw[-triangle 60,red] (-2,1.7) -- (-2.5,.85);

						\draw[-triangle 60,red] (-.6,2.6) -- (-1,3.4);
						\draw[-triangle 60,red,dashed] (-1,3.4) -- (.5,2.55);
						\draw[-triangle 60,red] (.5,2.55) -- (-.6,2.6);

						\fill (-2,1.7) circle (.085);
						\fill (-.5,.85) circle (.085);
						\fill (-.6,2.6) circle (.085);

						\fill[white] (-1,0) circle (.085);
						\fill[white] (-1,3.4) circle (.085);
						\fill[white] (.5,.85) circle (.085);
						\fill[white] (-2.5,.85) circle (.085);
						\fill[white] (.5,2.55) circle (.085);
						\fill[white] (-2.5,2.55) circle (.085);

						\draw (-1,0) circle (.085);
						\draw (-1,3.4) circle (.085);
						\draw (.5,.85) circle (.085);
						\draw (-2.5,.85) circle (.085);
						\draw (.5,2.55) circle (.085);
						\draw (-2.5,2.55) circle (.085);
					\end{tikzpicture}
				\end{center}
				\caption{The quiver of a triangulation of a hexagon.}\label{fig:quivertriangulation}
			\end{figure}
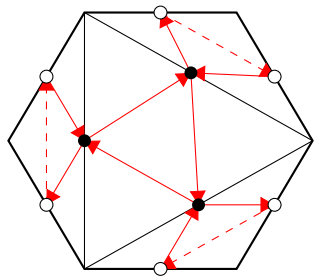
		\end{exmp}

		Then, we can associate with $T$ the \emph{seed} $\Sigma_T = (\x_T,\ex_T,B_T)$ where~:
		\begin{itemize}
			\item $\x_T$ is indexed by the arcs in $T$~;
			\item $\ex_T$ is indexed by the internal arcs in $T$~;
			\item $B^T$ is the matrix defined above.
		\end{itemize}
		The \emph{rooted cluster algebra associated with the triangulation $T$} is therefore $\mathcal A(\Sigma_T)$.

		It is proved in \cite{FST:surfaces} that if $T$ and $T'$ are two triangulations of $(S,M)$, then $\Sigma_T$ and $\Sigma_{T'}$ can be joined by a sequence of mutations. Therefore, up to a ring isomorphism, $\mathcal A(\Sigma_T)$ does not depend on the choice of the triangulation $T$ and is called \emph{the cluster algebra $\mathcal A(S,M)$ associated with $(S,M)$}, see \cite{FST:surfaces} for further details.

		More generally, it is possible to associate to a marked surface a cluster algebra with an arbitrary choice of coefficients and not only the coefficients arising from the boundary arcs considered above. However, for geometric statements (see for instance Sections \ref{ssection:typeA}, \ref{ssection:specialisationsurfaces} or \ref{ssection:topsurgery}), it is usually more natural to consider coefficients associated with the boundary arcs. 

		Finally, in order to avoid technicalities, we only consider \emph{untagged} triangulations but all the results we present can easily be extended to the case of tagged triangulations.

\section{The category of rooted cluster algebras}\label{section:amr}
	\subsection{Rooted cluster morphisms}
		\begin{defi}[Biadmissible sequences]
			Let $\Sigma = (\x,\ex,B)$ and $\Sigma'=(\x',\ex',B')$ be two seeds and let $f: \mathcal F_\Sigma \fl \mathcal F_{\Sigma'}$ be a map. A sequence $(x_1, \ldots, x_n) \subset \mathcal A(\Sigma)$ is called \emph{$(f,\Sigma,\Sigma')$-biadmissible} (or simply \emph{biadmissible} if there is no risk of confusion) if it is $\Sigma$-admissible and if $(f(x_1), \ldots, f(x_n))$ is $\Sigma'$-admissible.
		\end{defi}

		We fix two seeds $\Sigma = (\x,\ex,B)$ and $\Sigma'=(\x',\ex',B')$.

		\begin{defi}[Rooted cluster morphisms]\label{defi:main}
			A \emph{rooted cluster morphism} from $\mathcal A(\Sigma)$ to $\mathcal A(\Sigma')$ is a ring homomorphism from $\mathcal A(\Sigma)$ to $\mathcal A(\Sigma')$ such that~:
			\begin{tabularx}{15cm}{rX}
				\AM 1 & $f(\x) \subset \x' \sqcup \Z$~; \\
				\AM 2 & $f(\ex) \subset \ex' \sqcup \Z$~;\\
				\AM 3 & For every $(f,\Sigma,\Sigma')$-biadmissible sequence $(x_1, \ldots, x_n)$, we have 
				$$f(\mu_{x_n} \circ \cdots \circ \mu_{x_1,\Sigma}(y)) = \mu_{f(x_n)} \circ \cdots \circ \mu_{f(x_1),\Sigma'}(f(y))$$
				for any $y$ in $\x$. 
			\end{tabularx}
		\end{defi}
		
		We sometimes say that $f$ \emph{commutes with biadmissible mutations} if it satisfies \AM 3.

		\begin{rmq}
			A rooted cluster morphism may send a frozen cluster variable to an exchangeable cluster variable whereas \AM 2 prevents the opposite from happening.
		\end{rmq}

		\begin{rmq}
			Given an explicit ring homomorphism $f$ between two rooted cluster algebras, \AM 3 is difficult to check since it requires to test every biadmissible sequence, and there are in general infinitely many. However, we shall see that for instance for isomorphisms, it is sometimes possible to simplify this situation (see Lemma \ref{lem:amlocambij}).
		\end{rmq}

		\begin{prop}\label{prop:composition}
			The composition of rooted cluster morphisms is a rooted cluster morphism.
		\end{prop}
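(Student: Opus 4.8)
The plan is to verify the three defining conditions $\AM1$, $\AM2$, $\AM3$ directly for a composition $g \circ f$, where $f : \mathcal A(\Sigma) \fl \mathcal A(\Sigma')$ and $g : \mathcal A(\Sigma') \fl \mathcal A(\Sigma'')$ are rooted cluster morphisms. First I would note that $g \circ f$ is a ring homomorphism since the composition of ring homomorphisms is one, so it remains to check $\AM1$--$\AM3$.

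For $\AM1$ and $\AM2$, the argument is a short set-theoretic computation: applying $\AM1$ for $f$ gives $f(\x) \subset \x' \sqcup \Z$, and then $g$ applied to this set lands in $g(\x') \cup g(\Z)$. By $\AM1$ for $g$ we have $g(\x') \subset \x'' \sqcup \Z$, and since $g$ is a ring homomorphism $g(\Z) \subset \Z$, so $g(f(\x)) \subset \x'' \sqcup \Z$. The identical argument with $\ex, \ex', \ex''$ in place of the clusters (using $\AM2$ for $f$ and then $g$, plus $g(\Z)\subset\Z$) gives $\AM2$ for $g \circ f$.

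The only real content is $\AM3$, and this is the step I expect to be the main obstacle because the notion of biadmissibility is not obviously transitive: a $(g\circ f, \Sigma, \Sigma'')$-biadmissible sequence $(x_1,\ldots,x_n) \subset \mathcal A(\Sigma)$ is by definition $\Sigma$-admissible and has the property that $(g(f(x_1)),\ldots,g(f(x_n)))$ is $\Sigma''$-admissible, but a priori this does not guarantee that the intermediate sequence $(f(x_1),\ldots,f(x_n))$ is $\Sigma'$-admissible, which is what I need to invoke $\AM3$ for $f$ and then for $g$. My approach is to prove, by induction on $n$, the auxiliary claim that if $(x_1,\ldots,x_n)$ is $(g\circ f,\Sigma,\Sigma'')$-biadmissible then in fact $(x_1,\ldots,x_n)$ is $(f,\Sigma,\Sigma')$-biadmissible and $(f(x_1),\ldots,f(x_n))$ is $(g,\Sigma',\Sigma'')$-biadmissible. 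The key point in the induction step is that if $x_i$ maps under $g\circ f$ to an exchangeable variable of the appropriate mutated seed, then $f(x_i)$ cannot be an integer — because $g$ of an integer is an integer, not an exchangeable variable — so by $\AM1$/$\AM2$ for $f$, $f(x_i)$ is an exchangeable variable; and then applying $\AM3$ for $f$ identifies $\mu_{x_{i-1}}\circ\cdots\circ\mu_{x_1,\Sigma}(x_i)$ with a variable whose image under $f$ is $\mu_{f(x_{i-1})}\circ\cdots\circ\mu_{f(x_1),\Sigma'}(f(x_i))$, which is exchangeable in the corresponding seed, so the admissibility propagates one more step. One subtlety to handle carefully: $x_i$ here should be read as the appropriate mutated variable $\mu_{x_{i-1}}\circ\cdots\circ\mu_{x_1,\Sigma}(x_i)$ rather than the original, and likewise on the $\Sigma'$ side, so I will set up the induction statement to track these mutated variables explicitly.

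Once the auxiliary claim is established, $\AM3$ for $g \circ f$ follows immediately: given a $(g\circ f,\Sigma,\Sigma'')$-biadmissible sequence $(x_1,\ldots,x_n)$ and $y \in \x$, apply $\AM3$ for $f$ to get $f(\mu_{x_n}\circ\cdots\circ\mu_{x_1,\Sigma}(y)) = \mu_{f(x_n)}\circ\cdots\circ\mu_{f(x_1),\Sigma'}(f(y))$, then apply $\AM3$ for $g$ (legitimate since $(f(x_1),\ldots,f(x_n))$ is $(g,\Sigma',\Sigma'')$-biadmissible) to the right-hand side, obtaining $g(f(\mu_{x_n}\circ\cdots\circ\mu_{x_1,\Sigma}(y))) = \mu_{g(f(x_n))}\circ\cdots\circ\mu_{g(f(x_1)),\Sigma''}(g(f(y)))$, which is exactly $\AM3$ for $g\circ f$. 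This completes the proof.
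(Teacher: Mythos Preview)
Your proof is correct and follows essentially the same approach as the paper: verify $\AM1$ and $\AM2$ by the obvious set-theoretic chase, and for $\AM3$ establish the auxiliary claim that a $(g\circ f,\Sigma,\Sigma'')$-biadmissible sequence is both $(f,\Sigma,\Sigma')$-biadmissible and, after applying $f$, $(g,\Sigma',\Sigma'')$-biadmissible, the key point being that $f(x_i)$ cannot be an integer since $g$ of an integer is an integer rather than an exchangeable variable. Your inductive formulation is slightly more careful than the paper's (which treats the claim somewhat informally), but the strategy and the crucial observation are identical.
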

		\begin{proof}
			We fix three rooted cluster algebras $\mathcal A_1,\mathcal A_2$ and $\mathcal A_3$ with respective initial seeds $\Sigma_1,\Sigma_2$ and $\Sigma_3$ where $\Sigma_i=(\x_i,\ex_i,B^i)$ and consider rooted cluster morphisms $f:\mathcal A_1 \fl \mathcal A_2$ and $g:\mathcal A_2 \fl \mathcal A_3$. The composition $g \circ f$ is a ring homomorphism from $\mathcal A_1$ to $\mathcal A_3$. Moreover, we have~:
			\begin{enumerate}
				\item[\AM 1] $(g\circ f)(\x_1) = g(f(\x_1)) \subset g(\x_2 \sqcup \Z) \subset \x_3 \sqcup \Z$~;
				\item[\AM 2] $(g \circ f)(\ex_1) = g(f(\ex_1)) \subset g(\ex_2 \sqcup \Z) \subset \ex_3 \sqcup \Z$~;
				\item[\AM 3] Let $(x_1, \ldots, x_n)$ be a $((g \circ f),\Sigma_1,\Sigma_3)$-biadmissible sequence. We claim that $(x_1, \ldots, x_n)$ is $(f,\Sigma_1,\Sigma_2)$-biadmissible  and that $(f(x_1), \ldots, f(x_n))$ is $(g,\Sigma_2,\Sigma_3)$-biadmissible. Indeed, since $f$ satisfies \AM 2, an exchangeable variable $x \in \ex_1$ is sent either to an exchangeable variable in $\ex_2$ or to an integer. If $f(x) \in \Z$, because $g$ is a ring homomorphism, then $g(f(x)) \in \Z$ and therefore $(g \circ f)(x)$ is not exchangeable, a contradiction. Thus, $(x_1, \ldots, x_n)$ is $(f,\Sigma_1,\Sigma_2)$-biadmissible. It follows that $(f(x_1), \ldots, f(x_n))$ is $\Sigma_2$-admissible and therefore, as $(g(f(x_1)), \ldots, g(f(x_n)))$ is $\Sigma_3$-admissible, the sequence $(f(x_1), \ldots, f(x_n))$ is $(g,\Sigma_2,\Sigma_3)$-biadmissible. 

				Now, since $f$ satisfies \AM 3, for every $i$ such that $1 \leq i \leq n$ and any $x \in \x_1$, we have
				$$f(\mu_{x_i} \circ \cdots \circ \mu_{x_1}(x)) = \mu_{f(x_i)} \circ \cdots \circ \mu_{f(x_1)}(f(x)).$$
				and, since $g$ satisfies \AM 3, for any $(g,\Sigma_2,\Sigma_3)$-biadmissible sequence $(y_1, \ldots, y_n)$ and any $i$ such that $1 \leq i \leq n$ and $y \in \x_2$, we have
				$$g(\mu_{y_i} \circ \cdots \circ \mu_{y_1}(y)) = \mu_{g(y_i)} \circ \cdots \circ \mu_{g(y_1)}(g(y)).$$
				Then, as $f(\x_1) \subset \x_2 \sqcup \Z$, we get
				$$(g\circ f)(\mu_{x_i} \circ \cdots \circ \mu_{x_1}(x)) = \mu_{(g\circ f)(x_i)} \circ \cdots \circ \mu_{(g\circ f)(x_1)}((g\circ f)(x))$$
				for any $x \in \x$ so that $(g \circ f)$ satisfies \AM 3.
			\end{enumerate}
			Thus, $g \circ f : \mathcal A_1 \fl \mathcal A_3$ is a rooted cluster morphism.
		\end{proof}

		Therefore, we can set the following definition~:
		\begin{defi}
			The \emph{category of rooted cluster algebras} is the category $\amr$ defined by~:
			\begin{itemize}
				\item The objects in $\amr$ are the rooted cluster algebras~;
				\item The morphisms between two rooted cluster algebras are the rooted cluster morphisms.
			\end{itemize}
		\end{defi}

		\begin{rmq}
			One should observe the importance of the condition \AM 2 in the proof of Proposition \ref{prop:composition} in order to obtain well-defined compositions of rooted cluster morphisms. If this condition is removed from the definition of a rooted cluster morphism, one can easily construct examples of rooted cluster morphisms whose composition is not a rooted cluster morphism. 

			For instance, consider $\Sigma_1 = (x,x,[0])$, $\Sigma_2 = (x,\emptyset, [0])$ and $\Sigma_3 = \left((x,y),(x,y),\left[\begin{array}{rr} 0 & 1 \\ -1 & 0 \end{array}\right]\right)$. Let $f$ denote the identity in $\Q(x)$ and let $g$ denote the canonical inclusion of $\Q(x)$ in $\Q(x,y)$. By construction $f$ and $g$ satisfy \AM 1 and not \AM 2. Since there are neither non-empty $(f,\Sigma_1,\Sigma_2)$-biadmissible sequences nor non-empty $(g,\Sigma_2,\Sigma_3)$-biadmissible sequences, $f$ and $g$ satisfy \AM 3. The composition $g \circ f$ also satisfies \AM 1 and $(x)$ is $(g \circ f,\Sigma_1,\Sigma_3)$-biadmissible. However, 
			$$(g \circ f)(\mu_x(x)) = \frac{2}{x} \neq \frac{1+y}{x} = \mu_{(g \circ f)(x)}((g \circ f)(x)).$$
			Therefore, $g \circ f$ does not satisfy \AM 3.
		\end{rmq}

	\subsection{Ideal rooted cluster morphisms}
		\begin{defi}[Image seed]
			Let $f : \mathcal A(\Sigma) \fl \mathcal A(\Sigma')$ be a rooted cluster morphism. The \emph{image seed of $\Sigma$ under $f$} is
			$$f(\Sigma) = (\x' \cap f(\x), \ex' \cap f(\ex), B'[f(\x)]).$$
		\end{defi}
	
		\begin{lem}\label{lem:mutimage}
			Let $f : \mathcal A(\Sigma) \fl \mathcal A(\Sigma')$ be a rooted cluster morphism and let $(y_1, \ldots, y_l)$ be an $f(\Sigma)$-admissible sequence. Then $(y_1, \ldots, y_l)$ is $\Sigma'$-admissible and
			$$\mu_{y_l} \circ \cdots \mu_{y_1, f(\Sigma)}(y_1) = \mu_{y_l} \circ \cdots \mu_{y_1, \Sigma'}(y_1).$$
		\end{lem}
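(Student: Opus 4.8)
\medskip

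The plan is to prove the statement by induction on $l$, the length of the $f(\Sigma)$-admissible sequence, with the key observation being that the exchange matrix of $f(\Sigma)$ is a \emph{submatrix} of $B'$ in a way that is compatible with mutation. Recall that $f(\Sigma) = (\x' \cap f(\x), \ex' \cap f(\ex), B'[f(\x)])$, so the cluster of $f(\Sigma)$ is a subset $J := \x' \cap f(\x)$ of the cluster $\x'$ of $\Sigma'$, the exchangeable variables of $f(\Sigma)$ are contained in $\ex'$, and the exchange matrix of $f(\Sigma)$ is literally $B'[J]$. The crucial point is that when we mutate $\Sigma'$ at a variable $y_1$ that happens to lie in $J$ and is exchangeable in $f(\Sigma)$, the restriction of $B'$ to $J$ mutates correctly: by the mutation formula, $b'_{yz}$ for $y,z \in J$ only involves entries $b'_{yx}, b'_{xz}$ with $x = y_1 \in J$, so $(\mu_{y_1}(B'))[J] = \mu_{y_1}(B'[J])$. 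One must check, however, that $y_1$ is exchangeable in $\Sigma'$ — this is where \textbf{(CM2)}-type reasoning enters: since $y_1 \in \ex' \cap f(\ex) \subset \ex'$, the variable $y_1$ is indeed exchangeable in $\Sigma'$, so the first mutation $\mu_{y_1, \Sigma'}$ is defined.

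\medskip

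For the base case $l = 1$: by the above, $y_1$ exchangeable in $f(\Sigma)$ forces $y_1 \in \ex'$, hence $y_1$ is $\Sigma'$-admissible (a length-one admissible sequence). For the cluster-variable identity, note that the mutated variable $\mu_{y_1, f(\Sigma)}(y_1)$ is defined by the exchange relation using the entries of $B'[J]$ and the variables of $J \subset \x'$, while $\mu_{y_1, \Sigma'}(y_1)$ uses the entries of $B'$ in the row of $y_1$ and the variables of $\x'$. Because $B'$ is locally finite and $b'_{y_1 z} = 0$ for $z \notin J$ is \emph{not} automatic — here one needs the genuine content of the lemma: the rooted cluster morphism axioms guarantee that the support of the row $b'_{y_1, -}$ is contained in $f(\x)$, equivalently in $J$. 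I would extract this from \textbf{(CM3)} applied to the biadmissible sequence $(f^{-1}$-preimage of $y_1)$, or more directly from the structure of $f$ on seeds; if it is already established in the surrounding text (e.g. that $B'[f(\x)]$ captures all nonzero exchange entries out of vertices of $f(\Sigma)$), I would cite that. Granting this, the two exchange monomials coincide term by term, so $\mu_{y_1, f(\Sigma)}(y_1) = \mu_{y_1, \Sigma'}(y_1)$.

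\medskip

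For the inductive step, suppose the claim holds for sequences of length $l-1$. Given an $f(\Sigma)$-admissible sequence $(y_1, \ldots, y_l)$, the truncation $(y_1, \ldots, y_{l-1})$ is $f(\Sigma)$-admissible, so by induction it is $\Sigma'$-admissible and the partial mutations agree. Moreover, by the submatrix-compatibility of mutation noted above, the seed $\mu_{y_{l-1}} \circ \cdots \circ \mu_{y_1}(f(\Sigma))$ has exchange matrix equal to the restriction of $\mu_{y_{l-1}} \circ \cdots \circ \mu_{y_1}(B')$ to the (mutated) cluster, and its cluster is a subset of the cluster of $\mu_{y_{l-1}} \circ \cdots \circ \mu_{y_1}(\Sigma')$; in particular $y_l$, being exchangeable in the former seed, lies in the exchangeable variables of the latter, which shows $(y_1, \ldots, y_l)$ is $\Sigma'$-admissible. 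The final mutation identity then follows exactly as in the base case, applied now to the seed $\mu_{y_{l-1}} \circ \cdots \circ \mu_{y_1}(\Sigma')$ and its sub-seed: the support of the relevant exchange row is contained in the smaller cluster, so the exchange monomials match.

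\medskip

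The main obstacle I anticipate is precisely the support claim — that every nonzero exchange-matrix entry out of a vertex of the image seed $f(\Sigma)$ already lies within $f(\Sigma)$, so that "mutating inside $f(\Sigma)$" and "mutating inside $\Sigma'$" produce the same variable rather than differing by frozen-variable factors coming from $\x' \setminus f(\x)$. This is not formal; it should follow from axiom \textbf{(CM3)} (commutation with biadmissible mutations forces the exchange relation in $\Sigma'$, restricted to the image, to match the one obtained from $f(\Sigma)$), possibly combined with \textbf{(CM1)}, and I would organize the proof so that this lemma is the natural place to record that fact. Everything else — the induction bookkeeping and the submatrix-mutation identity — is routine.
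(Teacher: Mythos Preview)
Your proposal is correct and follows essentially the same approach as the paper: induction on $l$, reduction via the submatrix identity $\mu_{y_1}(B'[J]) = (\mu_{y_1}B')[J]$, and the support claim as the only nontrivial ingredient. Your first instinct for that support claim is exactly what the paper does: pick $x_1 \in \ex$ with $f(x_1)=y_1$, apply \AM 3 to the length-one biadmissible sequence $(x_1)$ to get $\mu_{y_1,\Sigma'}(y_1)=f(\mu_{x_1,\Sigma}(x_1))$, expand both sides, and read off $b'_{zy_1}=0$ for $z \in \x' \setminus f(\x)$ from algebraic independence of $\x'$; so commit to that argument rather than hedging.
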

		\begin{proof}
			Let $\Sigma = (\x,\ex,B)$ and $\Sigma'=(\x',\ex',B')$. Because $\mu_{y_1}(B'[f(\x)]) = B'[\mu_{y_1}(f(\x))]$, it is enough to prove the statement for $l=1$ and to proceed by induction. By definition, exchangeable variables in $f(\Sigma)$ are exchangeable in $\Sigma'$. Now, if $y_1$ is $f(\Sigma)$-admissible, we have
			$$\mu_{y_1,f(\Sigma)}(y_1) = \frac{1}{y_1} \left( \prod_{\substack{b'_{zy_1}>0 \\ z \in \x' \cap f(\x)}}z^{b'_{zy_1}} + \prod_{\substack{b'_{zy_1}<0 \\ z \in \x' \cap f(\x)}}z^{-b'_{zy_1}}\right).$$
			Because $y_1$ is exchangeable in $f(\Sigma)$, there exists some $x_1 \in \ex$ such that $y_1=f(x_1)$ and we have
			$$\mu_{x_1,\Sigma}(x_1) = \frac{1}{x_1} \left( \prod_{\substack{b_{ux_1}>0 \\ u \in \x}}u^{b_{ux_1}} + \prod_{\substack{b_{ux_1}<0 \\ u \in \x}}u^{-b_{ux_1}}\right).$$
			Therefore, since $f$ satisfies \AM 3, we get
			$$\mu_{y_1,\Sigma'}(y_1) = f(\mu_{x_1,\Sigma}(x_1)) = \frac{1}{y_1} \left( \prod_{\substack{b_{ux_1}>0 \\ u \in \x}}f(u)^{b_{ux_1}} + \prod_{\substack{b_{ux_1}<0 \\ u \in \x}}f(u)^{-b_{ux_1}}\right).$$
			And by definition 
			$$\mu_{y_1,\Sigma'}(y_1) = \frac{1}{y_1} \left( \prod_{\substack{b'_{zy_1}>0 \\ z \in \x'}}z^{b'_{zy_1}} + \prod_{\substack{b'_{zy_1}<0 \\ z \in \x'}}z^{-b'_{zy_1}}\right).$$
			Therefore, $b'_{zy_1} = 0$ for any $z \in \x' \setminus f(\x)$ so that $\mu_{y_1,\Sigma'}(y_1) = \mu_{y_1,f(\Sigma)}(y_1)$.
		\end{proof}

		\begin{lem}\label{lem:image}
			Let $f : \mathcal A(\Sigma) \fl \mathcal A(\Sigma')$ be a rooted cluster morphism. Then $\mathcal A(f(\Sigma)) \subset f(\mathcal A(\Sigma))$.
		\end{lem}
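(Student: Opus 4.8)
The goal is to show that $\mathcal A(f(\Sigma))$, the rooted cluster algebra on the image seed, is contained in $f(\mathcal A(\Sigma))$. Recall that $\mathcal A(f(\Sigma))$ is the $\Z$-subalgebra of $\mathcal F_{\Sigma'}$ generated by all cluster variables appearing in seeds mutation-equivalent to $f(\Sigma)$. So it suffices to show that every such cluster variable lies in $f(\mathcal A(\Sigma))$. The plan is to take an arbitrary $f(\Sigma)$-admissible sequence $(y_1,\dots,y_l)$ and an element $y$ in the cluster of $f(\Sigma)$, and to show that $\mu_{y_l}\circ\cdots\circ\mu_{y_1,f(\Sigma)}(y)$ belongs to $f(\mathcal A(\Sigma))$; together with the fact that the frozen part of $f(\Sigma)$'s cluster consists of elements $f(x)$ with $x \in \x$, this covers all generators of $\mathcal A(f(\Sigma))$.

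First I would handle the starting cluster of $f(\Sigma)$: every element of $\x' \cap f(\x)$ is by definition of the form $f(x)$ for some $x \in \x$, hence lies in $f(\mathcal A(\Sigma))$. Next, given an $f(\Sigma)$-admissible sequence $(y_1,\dots,y_l)$, I would use Lemma \ref{lem:mutimage} to replace the mutations computed in $f(\Sigma)$ by the corresponding mutations computed in $\Sigma'$: the lemma says exactly that $(y_1,\dots,y_l)$ is $\Sigma'$-admissible and that the variables obtained agree. Now each $y_i$ is exchangeable in $f(\Sigma)$, so $y_i = f(x_i)$ for some $x_i \in \ex$; the key point is to check, by induction on the length of the sequence, that $(x_1,\dots,x_l)$ is $(f,\Sigma,\Sigma')$-biadmissible. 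Once this is established, \AM 3 gives
$$\mu_{y_l}\circ\cdots\circ\mu_{y_1,\Sigma'}(f(y)) \;=\; f\bigl(\mu_{x_l}\circ\cdots\circ\mu_{x_1,\Sigma}(y)\bigr)$$
for any $y \in \x$, and the right-hand side is manifestly in $f(\mathcal A(\Sigma))$, since $\mu_{x_l}\circ\cdots\circ\mu_{x_1,\Sigma}(y)$ is a cluster variable of $\mathcal A(\Sigma)$. Taking $y$ to run over the preimages of the starting cluster of $f(\Sigma)$ under $f$ then shows that all cluster variables of $\mathcal A(f(\Sigma))$ lie in $f(\mathcal A(\Sigma))$, and since $f(\mathcal A(\Sigma))$ is a $\Z$-subalgebra, the inclusion $\mathcal A(f(\Sigma)) \subset f(\mathcal A(\Sigma))$ follows.

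The main obstacle is the inductive verification that $(x_1,\dots,x_l)$ is $(f,\Sigma,\Sigma')$-biadmissible, i.e.\ that it is $\Sigma$-admissible (the $\Sigma'$-admissibility of $(f(x_1),\dots,f(x_l)) = (y_1,\dots,y_l)$ being already guaranteed by Lemma \ref{lem:mutimage}). For the inductive step one needs to know that $x_{i+1}$ is exchangeable in $\mu_{x_i}\circ\cdots\circ\mu_{x_1}(\Sigma)$, knowing that $y_{i+1} = f(x_{i+1})$ is exchangeable in $\mu_{y_i}\circ\cdots\circ\mu_{y_1}(f(\Sigma))$. A subtlety to watch is that the mutated exchange matrix entry $b'_{zy_{i+1}}$ might a priori be non-zero for some $z \in \x' \setminus f(\x)$; but the computation in Lemma \ref{lem:mutimage} (applied along the already-processed part of the sequence, using that $\mu$ commutes with passing to the submatrix $B'[f(\x)]$) shows the exchange relation for $y_{i+1}$ computed in the mutated image seed matches the one computed in the full mutated $\Sigma'$, and the image under $f$ of the exchange relation for $x_{i+1}$ in the mutated $\Sigma$, so exchangeability transfers back. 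Care with the bookkeeping of "which seed each variable lives in" is where the argument needs to be written out precisely, but no genuinely hard estimate or construction is involved.
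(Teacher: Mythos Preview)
Your overall strategy matches the paper's, but there is a genuine gap in how you construct the lift $(x_1,\dots,x_l)$. You write ``each $y_i$ is exchangeable in $f(\Sigma)$, so $y_i = f(x_i)$ for some $x_i \in \ex$.'' This is only true for $i=1$. For $i \geq 2$, admissibility means $y_i$ is exchangeable in the \emph{mutated} seed $\mu_{y_{i-1}}\circ\cdots\circ\mu_{y_1}(f(\Sigma))$, whose cluster is no longer contained in $\x'\cap f(\x)$; for instance $y_2$ could equal the newly created variable $y_1'$, which is not of the form $f(x)$ for any $x\in\x$. So the lifts $x_i$ cannot all be chosen in $\ex$ up front.

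The paper fixes this by building the lift inductively and using \AM 3 at each step to move back and forth between $\Sigma$ and $f(\Sigma)$. Given that $(x_1,\dots,x_k)$ has already been constructed and is biadmissible, one observes that $y_{k+1}$, being exchangeable in $\mu_{y_k}\circ\cdots\circ\mu_{y_1}(f(\Sigma))$, must equal $\mu_{y_k}\circ\cdots\circ\mu_{y_1}(f(x))$ for some $x\in\ex$ (since the exchangeable variables of the mutated seed are precisely the mutations of those of $f(\Sigma)$). By \AM 3 this is $f\bigl(\mu_{x_k}\circ\cdots\circ\mu_{x_1}(x)\bigr)$, so one sets $x_{k+1}:=\mu_{x_k}\circ\cdots\circ\mu_{x_1}(x)$, which is exchangeable in $\mu_{x_k}\circ\cdots\circ\mu_{x_1}(\Sigma)$ because $x\in\ex$. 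This is the missing inductive construction; once it is in place, your concluding use of Lemma \ref{lem:mutimage} and \AM 3 goes through exactly as you outlined. The discussion in your final paragraph about matrix entries $b'_{zy_{i+1}}$ is not what is needed here: that computation belongs to Lemma \ref{lem:mutimage} itself, not to the lifting argument.
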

		\begin{proof}
			Let $\Sigma = (\x,\ex,B)$ and $\Sigma'=(\x',\ex',B')$. We have to prove that any cluster variable in $\mathcal A(f(\Sigma))$ belongs to $f(\mathcal A(\Sigma))$. Fix an arbitrary cluster variable $y \in \mathcal A(f(\Sigma))$, then there exists an $f(\Sigma)$-admissible sequence $(y_1, \ldots, y_l)$ such that $y =  \mu_{y_l} \circ \cdots \circ \mu_{y_1}(y_1)$. 

			We claim that any $f(\Sigma)$-admissible sequence $(y_1, \ldots, y_l)$ lifts to an $(f,\Sigma,f(\Sigma))$-biadmissible sequence $(x_1, \ldots, x_l)$. If $l=1$, then $y_1 =f(x_1)$ for some $x_1 \in \ex$ and thus $(x_1)$ is biadmissible. Assume now that for $k <l$ the sequence $(y_1, \ldots, y_k)$ lifts to a biadmissible sequence $(x_1, \ldots, x_k)$. Then $y_{k+1}$ is exchangeable in $\mu_{y_k} \circ \cdots \circ \mu_{y_1}(f(\Sigma))$ and thus there exists some $x \in \ex$ such that $y_{k+1} = \mu_{y_k} \circ \cdots \circ \mu_{y_1}(f(x))$. Because $f$ satisfies $\AM3$, we get 
			$$y_{k+1} = f(\mu_{x_k} \circ \cdots \circ \mu_{x_1}(x)).$$
			Therefore, $x_{k+1} = \mu_{x_k} \circ \cdots \circ \mu_{x_1}(x)$ is exchangeable in $\mu_{x_k} \circ \cdots \circ \mu_{x_1}(\Sigma)$ and $(y_1, \ldots, y_{k+1})$ lifts to $(x_1, \ldots, x_{k+1})$. The claim follows by induction.

			If $l=0$, then by definition of $f(\Sigma)$, the elements in the cluster of $f(\Sigma)$ belong to $f(\mathcal A(\Sigma))$. If $l \geq 0$, then it follows from the claim that $(y_1, \ldots, y_l)$ lifts to an $(f, \Sigma, f(\Sigma))$-biadmissible sequence $(x_1, \ldots, x_l)$ in $\mathcal A(\Sigma)$. Moreover, we have 
			\begin{align*}
				y 
					& = \mu_{y_l} \circ \cdots \circ \mu_{y_1,f(\Sigma)}(y_1)\\
					& = \mu_{f(x_l)} \circ \cdots \circ \mu_{f(x_1),f(\Sigma)}(f(x_1))\\
					& = \mu_{f(x_l)} \circ \cdots \circ \mu_{f(x_1),\Sigma'}(f(x_1))\\
					& = f(\mu_{x_l} \circ \cdots \circ \mu_{x_1,\Sigma}(x_1)) \in f(\mathcal A(\Sigma)).
			\end{align*}
			where the third equality follows from Lemma \ref{lem:mutimage} and the last one from the fact that $f$ satisfies \AM 3.
		\end{proof}

		\begin{defi}[Ideal rooted cluster morphism]
			A rooted cluster morphism $f : \mathcal A(\Sigma) \fl \mathcal A(\Sigma')$ is called \emph{ideal} if $\mathcal A(f(\Sigma)) = f(\mathcal A(\Sigma))$.
		\end{defi}

		We shall meet along the article several natural classes of rooted cluster morphisms which are ideal (see for instance Corollary \ref{corol:injideal}, Proposition \ref{prop:res} or Proposition \ref{prop:specialisation}). However, we do not know whether or not every rooted cluster morphism is ideal. We may thus state the following problem~:
		\begin{prob}
			Characterise the rooted cluster morphisms which are ideal.
		\end{prob}

		\begin{exmp}
			Consider the seeds
			$$\Sigma = \left((x_1,x_2,x_3),(x_2),
				\left[\begin{array}{rrr}
					0 & 1 & 0 \\
					-1 & 0 & 1 \\
					0 & -1 & 0
				\end{array}\right]
			\right)
			\textrm{ and }
			\Sigma' = \left((y_1,y_2,y_3),(y_1,y_2,y_3),
				\left[\begin{array}{rrr}
					0 & 1 & 0 \\
					-1 & 0 & 1 \\
					0 & -1 & 0			                  
				\end{array}\right]
			\right)$$
			and the ring homomorphism 
			$$f: \left\{\begin{array}{rcl}
				\mathcal F_{\Sigma} & \fl & \mathcal F_{\Sigma'} \\
				x_1 & \mapsto & 1 \\
				x_2 & \mapsto & y_1 \\
				x_3 & \mapsto & y_2
			\end{array}\right.$$
			The only $(f,\Sigma,\Sigma')$-biadmissible sequence to consider is $(x_2)$ and 
			$$f(\mu_{x_2}(x_2)) = f\left(\frac{x_1+x_3}{x_2}\right) = \frac{1+y_2}{y_1} = \mu_{y_1}(y_1) = \mu_{f(x_2)}(f(x_2))$$
			so that $f$ is a rooted cluster morphism $\mathcal A(\Sigma) \fl \mathcal A(\Sigma')$. Moreover we have 
			$$f(\Sigma) = \left((y_1,y_2),(y_1)
				\left[\begin{array}{rr}
					0 & 1 \\
					-1 & 0
			\end{array}\right]\right).$$
			Therefore
			$$\mathcal A(\Sigma) = \Z[x_1,x_2,x_3,\frac{x_1+x_3}{x_2}] \text{ and }\mathcal A(f(\Sigma)) = \Z[y_1,y_2,\frac{1+y_2}{y_1}]$$
			so that $\mathcal A(f(\Sigma)) = f(\mathcal A(\Sigma))$ and therefore $f$ is ideal.
		\end{exmp}

		\begin{defi}[Rooted cluster ideal]
			A (ring theoretic) ideal $I$ in a rooted cluster algebra $\mathcal A(\Sigma)$ is called a \emph{rooted cluster ideal} if the quotient $\mathcal A(\Sigma)/I$ can be endowed with a structure of rooted cluster algebra such that the canonical projection is a rooted cluster morphism.
		\end{defi}

		\begin{prop}
			Let $f: \mathcal A(\Sigma) \fl \mathcal A(\Sigma')$ be an ideal rooted cluster morphism. Then $\ker(f)$ is a rooted cluster ideal.
		\end{prop}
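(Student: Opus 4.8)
The plan is to use the first isomorphism theorem together with the idealness of $f$ to identify the quotient, and then to reduce everything to verifying the three axioms for the corestriction of $f$. Since $f$ is ideal, $f(\mathcal A(\Sigma)) = \mathcal A(f(\Sigma))$, so the first isomorphism theorem yields a ring isomorphism $\theta : \mathcal A(\Sigma)/\ker(f) \xrightarrow{\sim} \mathcal A(f(\Sigma))$ with $\theta \circ \pi = \bar f$, where $\pi$ is the canonical projection and $\bar f : \mathcal A(\Sigma) \to \mathcal A(f(\Sigma))$ is the map $f$ viewed as a map onto its image. The initial seed of the rooted cluster algebra $\mathcal A(f(\Sigma))$ is $f(\Sigma) = (\x' \cap f(\x),\, \ex' \cap f(\ex),\, B'[f(\x)])$. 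To prove that $\ker(f)$ is a rooted cluster ideal it therefore suffices to exhibit this seed $f(\Sigma)$ and the isomorphism $\theta$ and to show that the composite $\theta\circ\pi = \bar f$ is a rooted cluster morphism; equivalently, one must check \AM 1, \AM 2 and \AM 3 for $\bar f : \mathcal A(\Sigma) \to \mathcal A(f(\Sigma))$.

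Conditions \AM 1 and \AM 2 for $\bar f$ are immediate from the same conditions for $f$. If $x \in \x$, then $f(x) \in \x' \sqcup \Z$ by \AM 1 for $f$; if moreover $f(x) \notin \Z$, then $f(x) \in \x' \cap f(\x)$, which is precisely the cluster of $f(\Sigma)$, so $\bar f(\x) \subset (\x' \cap f(\x)) \sqcup \Z$. The same argument applied with \AM 2 for $f$ gives $\bar f(\ex) \subset (\ex' \cap f(\ex)) \sqcup \Z$.

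For \AM 3, let $(x_1, \ldots, x_n)$ be $(\bar f, \Sigma, f(\Sigma))$-biadmissible, so that it is $\Sigma$-admissible and $(f(x_1), \ldots, f(x_n))$ is $f(\Sigma)$-admissible. By Lemma \ref{lem:mutimage}, $(f(x_1), \ldots, f(x_n))$ is then also $\Sigma'$-admissible, hence $(x_1, \ldots, x_n)$ is $(f, \Sigma, \Sigma')$-biadmissible and \AM 3 for $f$ gives, for every $y \in \x$,
\[
\bar f\bigl(\mu_{x_n}\circ\cdots\circ\mu_{x_1,\Sigma}(y)\bigr) = \mu_{f(x_n)}\circ\cdots\circ\mu_{f(x_1),\Sigma'}(f(y)),
\]
an identity whose left-hand side lies in $f(\mathcal A(\Sigma)) = \mathcal A(f(\Sigma))$. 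It then remains to replace $\Sigma'$ by $f(\Sigma)$ in the right-hand side. If $f(y) \in \Z$ both sides equal $f(y)$; otherwise $f(y)$ lies in the cluster of $f(\Sigma)$ by the previous paragraph, and an induction on the length of the sequence, based on the matrix identity $\mu_{z}(B'[f(\x)]) = B'[\mu_z(f(\x))]$ appearing in the proof of Lemma \ref{lem:mutimage}, shows that mutating $f(\Sigma)$ along $(f(x_1), \ldots, f(x_n))$ yields a seed whose exchange matrix is the principal submatrix, on the overlapping indices, of the one obtained by mutating $\Sigma'$ along the same sequence, with matching variable labels; hence $\mu_{f(x_n)}\circ\cdots\circ\mu_{f(x_1),\Sigma'}(f(y)) = \mu_{\bar f(x_n)}\circ\cdots\circ\mu_{\bar f(x_1),f(\Sigma)}(\bar f(y))$. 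Combining the two displayed identities gives \AM 3 for $\bar f$, and the proof is complete.

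The main obstacle is exactly this last induction: Lemma \ref{lem:mutimage} is stated for a single mutation and only for the newly created variable, and one must upgrade it to the assertion that along an arbitrarily long biadmissible sequence the mutation of $f(\Sigma)$ remains compatible with the mutation of $\Sigma'$ on the whole overlapping cluster. Everything else — \AM 1, \AM 2, the passage through the first isomorphism theorem, and the use of idealness to get $f(\mathcal A(\Sigma)) = \mathcal A(f(\Sigma))$ — is purely formal.
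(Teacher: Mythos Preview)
Your proof is correct and follows exactly the same strategy as the paper: use the first isomorphism theorem together with idealness to identify $\mathcal A(\Sigma)/\ker(f)$ with $\mathcal A(f(\Sigma))$, and then argue that the corestriction $\bar f:\mathcal A(\Sigma)\to\mathcal A(f(\Sigma))$ is a rooted cluster morphism. The paper's own proof simply asserts this last step (``since $f$ is a rooted cluster morphism, the morphism $\bar f$ \ldots\ is also a rooted cluster morphism''), whereas you spell out \AM 1, \AM 2 and \AM 3 explicitly; the induction you flag as the ``main obstacle'' is precisely the content of Lemma \ref{lem:mutimage} and its proof (the matrix identity $\mu_z(B'[f(\x)]) = B'[\mu_z(f(\x))]$ is exactly what drives the induction there), so nothing new is needed.
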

		\begin{proof}
			Let $f: \mathcal A(\Sigma) \fl \mathcal A(\Sigma')$ be an ideal rooted cluster morphism. Then $f$ induces a ring isomorphism 
			$$\mathcal A(\Sigma) /\ker(f) \simeq f(\mathcal A(\Sigma)) = \mathcal A(f(\Sigma))$$
			endowing $\mathcal A(\Sigma) /\ker(f)$ with a structure of rooted cluster algebra with initial seed $f(\Sigma)$.

			Moreover, since $f$ is a rooted cluster morphism, the morphism $\bar f: \mathcal A(\Sigma) \fl \mathcal A(f(\Sigma))$ induced by $f$ is also a rooted cluster morphism and therefore $\mathcal A(\Sigma) \fl \mathcal A(\Sigma)/\ker(f)$ is a rooted cluster morphism. 
		\end{proof}

\section{Rooted cluster isomorphisms}\label{section:iso}
	In this section we characterise isomorphisms in $\amr$ which we call \emph{rooted cluster isomorphisms}. These results are generalisations of those obtained in \cite{ASS:automorphisms} for coefficient-free skew-symmetric cluster algebras. We recall that an \emph{isomorphism} in $\amr$ is an invertible morphism.

	We start with a general lemma on surjective morphisms~:
	\begin{lem}\label{lem:surj}
		Let $\Sigma=(\x_1,\ex_1,B^1)$ and $\Sigma_2=(\x_2,\ex_2,B^2)$ be two seeds and $f: \mathcal A(\Sigma_1) \fl \mathcal A(\Sigma_2)$ be a surjective ring homomorphism satisfying \AM 1. Then $\x_2 \subset f(\x_1)$ and $\ex_2 \subset f(\ex_1)$.
	\end{lem}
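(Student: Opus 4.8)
The statement to prove is: if $f: \mathcal A(\Sigma_1) \fl \mathcal A(\Sigma_2)$ is a surjective ring homomorphism satisfying \AM 1, then $\x_2 \subset f(\x_1)$ and $\ex_2 \subset f(\ex_1)$. The plan is to argue by the algebraic independence (as a transcendence basis / free generating set) of a cluster, together with surjectivity.

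First I would prove $\x_2 \subset f(\x_1)$. Fix $z \in \x_2$. Since $f$ is surjective, there is some $P \in \mathcal A(\Sigma_1)$ with $f(P) = z$. Now $P$ is a $\Z$-polynomial expression in finitely many cluster variables of $\mathcal A(\Sigma_1)$; by the Laurent phenomenon each of these cluster variables is a Laurent polynomial in the initial cluster $\x_1$ with coefficients polynomial in the frozen initial variables, so $P$ lies in the ring $\Z[\x_1^{\pm 1}]$ of Laurent polynomials (in fact with denominators only involving exchangeable variables, but all I need is $P \in \Z[x^{\pm 1} : x \in \x_1]$). Applying $f$ and using \AM 1, $f(x) \in \x_2 \sqcup \Z$ for every $x \in \x_1$; hence $z = f(P)$ lies in the subring of $\mathcal F_{\Sigma_2}$ generated by $\x_2 \cup \Z$ together with inverses of those $f(x)$ that are nonzero integers or elements of $\x_2$. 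So $z$ is a Laurent polynomial (with integer coefficients) in some finite subset $\{z_1, \dots, z_r\} \subset \x_2$. Since $\x_2$ is a free generating set of $\mathcal F_{\Sigma_2}$, the variables $z, z_1, \dots, z_r$ are algebraically independent over $\Q$ unless $z \in \{z_1, \dots, z_r\}$; but a nontrivial Laurent-polynomial relation $z = Q(z_1, \dots, z_r)$ would force $z \in \{z_1, \dots, z_r\}$ after clearing denominators (comparing it with the relation $z - Q = 0$ in the polynomial ring $\Z[z, z_1, \dots, z_r]$ localized appropriately). Hence $z = f(x)$ for the $x \in \x_1$ with $f(x) = z$, i.e. $z \in f(\x_1)$.

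Next I would prove $\ex_2 \subset f(\ex_1)$. Take $z \in \ex_2$; by the first part $z = f(x)$ for some $x \in \x_1$. Suppose, for contradiction, that $x \notin \ex_1$, i.e. $x$ is frozen in $\Sigma_1$. Then $x$ never mutates, so $(z) = (f(x))$ is $\Sigma_2$-admissible while... — wait, $x$ is not $\Sigma_1$-admissible, so $(x)$ is not $(f,\Sigma_1,\Sigma_2)$-biadmissible, and \AM 3 gives no direct contradiction. Instead I would use surjectivity more carefully: since $z \in \ex_2$ is exchangeable, $\mu_z(z) = \frac{1}{z}\bigl(\prod_{b^2_{wz}>0} w^{b^2_{wz}} + \prod_{b^2_{wz}<0} w^{-b^2_{wz}}\bigr)$ is a cluster variable of $\mathcal A(\Sigma_2)$, hence (again by surjectivity) equals $f(R)$ for some $R \in \mathcal A(\Sigma_1)$. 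The key observation is that $\mu_z(z) \notin \Z[\x_2^{\pm1}]$ has $z$ appearing with negative exponent, whereas $f(R)$, being the image under $f$ of an element of $\mathcal A(\Sigma_1) = \Z[\text{cluster variables of }\Sigma_1]$, is a $\Z$-linear combination of monomials in the $f(c)$ for $c$ ranging over cluster variables of $\mathcal A(\Sigma_1)$ — and since $x$ is frozen, $x$ occurs in \emph{every} seed of $\Sigma_1$ and divides no cluster variable's... I would instead invoke Lemma \ref{lem:injfrozen}-type reasoning. The cleanest route: by \AM 1 applied throughout, $f$ maps $\mathcal A(\Sigma_1)$ into the subring $\Z[y : y \in \x_2, \ y \neq z]\,[z]$ (no negative powers of $z$) provided no cluster variable of $\mathcal A(\Sigma_1)$ maps to something with $z$ in a denominator; since denominators of cluster variables of $\mathcal A(\Sigma_1)$ only involve \emph{exchangeable} initial variables and $x$ is frozen, $f$ of any cluster variable of $\mathcal A(\Sigma_1)$ is, after the Laurent expansion in $\x_1$, a Laurent polynomial whose $f$-image involves $z = f(x)$ only with nonnegative powers. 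Hence $f(\mathcal A(\Sigma_1)) \subset \Z[\x_2 \setminus \{z\}][z]$, contradicting $\mu_z(z) \in f(\mathcal A(\Sigma_1))$. Therefore $x \in \ex_1$, giving $z \in f(\ex_1)$.

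**The main obstacle.** The delicate point is the second part: ruling out that a frozen variable of $\Sigma_1$ maps onto an exchangeable variable of $\Sigma_2$. This is exactly where non-invertibility of frozen variables is essential (the excerpt flags Lemma \ref{lem:injfrozen} as "crucial"), and the argument must carefully track that denominators of cluster variables of $\mathcal A(\Sigma_1)$, written in the initial cluster $\x_1$, involve only exchangeable variables — so that no power of the frozen $x$ can ever appear negatively, and consequently no element of $f(\mathcal A(\Sigma_1))$ can have $z = f(x)$ in a denominator, whereas $\mu_z(z)$ does. Getting the bookkeeping of Laurent expansions and the transcendence-basis argument precisely right is the bulk of the work; the first part is comparatively routine once one observes $P \in \Z[\x_1^{\pm 1}]$ and uses algebraic independence of $\x_2$.
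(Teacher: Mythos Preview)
Your approach is essentially the same as the paper's: Laurent phenomenon plus algebraic independence of $\x_2$ for the first part, and for the second part the observation that frozen variables never appear in denominators of Laurent expansions, so that $\mu_z(z)$ (which genuinely has $z$ in its denominator) cannot lie in $f(\mathcal A(\Sigma_1))$.

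There is one logical slip in your setup of the second part. You fix a single preimage $x \in \x_1$ with $f(x)=z$ and assume \emph{this} $x$ is frozen. But your subsequent argument---that every element of $f(\mathcal A(\Sigma_1))$ involves $z$ only with nonnegative exponent---requires that \emph{every} variable in $\x_1$ mapping to $z$ be frozen, not just the one you picked: if some other $x' \in \ex_1$ also satisfies $f(x')=z$, then $x'$ can appear in denominators and your degree bound on $z$ fails. Of course in that case $z \in f(\ex_1)$ already and there is nothing to prove, so the fix is simply to phrase the contradiction hypothesis correctly: assume $z \notin f(\ex_1)$, i.e.\ $f^{-1}(z) \cap \x_1 \subset \x_1 \setminus \ex_1$. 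This is exactly how the paper sets it up. With that correction your argument goes through and matches the paper's.
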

	\begin{proof}
		Let $z \in \x_2$. Since $f$ is surjective, there exists $y \in \mathcal A(\Sigma_1)$ such that $f(y)=z$. According to the Laurent phenomenon, there exists a Laurent polynomial $L$ such that $y=L(x|x \in \x_1)$. Therefore, $z = f(y) = L(f(x)|x \in \x_1)$. Since $f$ satisfies \AM 1, we know that $f(\x_1) \subset \x_2 \sqcup \Z$. If $z \not \in f(\x_1)$, since $\x_2$ is a transcendence basis of $\mathcal F_{\Sigma_2}$, we get a contradiction. Therefore, $z \in f(\x_1)$ and thus $\x_2 \subset f(\x_1)$.

		Fix now $z \in \ex_2$. According to the above discussion, we know that $f^{-1}(z) \cap \x_1 \neq \emptyset$. Since $f$ is surjective, there exists $X \in \mathcal A(\Sigma_1)$ such that $f(X) = \mu_{z,\Sigma_2}(z)$. Now we know that $X \in \Z[\x_1 \setminus \ex_1][\ex_1^{\pm 1}]$. Therefore, if $f^{-1}(z) \cap \ex_1 = \emptyset$, then $f^{-1}(z) \cap \x_1 \subset (\x_1 \setminus \ex_1)$ and therefore, $X$ is a sum of Laurent monomials with non-negative partial degree with respect to any $t \in f^{-1}(z) \cap \x_1$. Therefore, $f(X)$ is a sum of Laurent monomials with non-negative partial degree with respect to $z$, a contradiction since $\mu_{z,\Sigma_2}(z) = f(X)$ is the sum of two Laurent monomials with partial degree -1 with respect to $z$. Therefore $f^{-1}(z) \cap \ex_1 \neq \emptyset$ so that $\ex_2 \subset f(\ex_1)$.
	\end{proof}

	\begin{corol}\label{corol:bij}
		Let $\Sigma_i=(\x_i,\ex_i,B^i)$ be a seed for $i \in \ens{1,2}$ and let $f : \mathcal A(\Sigma_1) \fl \mathcal A(\Sigma_2)$ be a bijective ring homomorphism satisfying \AM 1. Then $f$ induces a bijection from $\x_1$ to $\x_2$. If moreover $f$ satisfies \AM 2, then $f$ induces a bijection from $\ex_1$ to $\ex_2$.
	\end{corol}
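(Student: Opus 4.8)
The plan is to bootstrap from Lemma \ref{lem:surj} together with the elementary fact that a unital ring homomorphism restricts to the identity on $\Z$. First, since $f$ is bijective it is in particular surjective, so Lemma \ref{lem:surj} immediately gives the inclusions $\x_2 \subseteq f(\x_1)$ and $\ex_2 \subseteq f(\ex_1)$. What remains is to establish the reverse inclusions $f(\x_1) \subseteq \x_2$ and, assuming \AM 2, $f(\ex_1) \subseteq \ex_2$; an injective map which is also surjective is then automatically a bijection, and this will finish the proof.

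The one place where I expect to do actual work is upgrading the inclusion $f(\x_1) \subseteq \x_2 \sqcup \Z$ coming from \AM 1 to $f(\x_1) \subseteq \x_2$; equivalently, ruling out that some initial cluster variable is specialised to an integer. Here I would argue by contradiction: if $x \in \x_1$ satisfies $f(x) = n$ for some $n \in \Z$, then since $f(1_{\mathcal A(\Sigma_1)}) = 1_{\mathcal A(\Sigma_2)}$ we also have $f(n \cdot 1_{\mathcal A(\Sigma_1)}) = n$; but $x \neq n \cdot 1_{\mathcal A(\Sigma_1)}$ inside $\mathcal A(\Sigma_1) \subset \mathcal F_{\Sigma_1}$, since $x$ is an indeterminate over $\Z$ and hence transcendental while $n$ is not, and this contradicts the injectivity of $f$. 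Thus $f(\x_1) \cap \Z = \emptyset$, so $f(\x_1) \subseteq \x_2$; combined with $\x_2 \subseteq f(\x_1)$ this yields $f(\x_1) = \x_2$, and since $f$ is injective its restriction $\x_1 \to \x_2$ is injective and onto, hence bijective.

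For the second statement I would reuse this: if \AM 2 holds then $f(\ex_1) \subseteq \ex_2 \sqcup \Z$, and since $\ex_1 \subseteq \x_1$ we already know $f(\ex_1) \cap \Z \subseteq f(\x_1) \cap \Z = \emptyset$, whence $f(\ex_1) \subseteq \ex_2$; together with the inclusion $\ex_2 \subseteq f(\ex_1)$ from Lemma \ref{lem:surj} and the injectivity of $f$, the restriction $f\colon \ex_1 \to \ex_2$ is again a bijection.

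I do not anticipate a genuine obstacle once Lemma \ref{lem:surj} is available: the substantive content is the single observation that injectivity prevents a cluster variable from landing in $\Z$, and everything else is set-theoretic bookkeeping. The only point to be careful about is not to tacitly assume that $\x_1$ or $\x_2$ is finite — the argument above is purely pointwise and avoids any cardinality considerations.
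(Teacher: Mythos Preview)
Your proof is correct and follows essentially the same approach as the paper's. The paper's proof is terser---it writes in one sentence ``Since $f$ is injective and satisfies \AM 1, it induces an injection $\x_1 \fl \x_2$''---leaving implicit precisely the observation you spell out, namely that injectivity together with $f|_{\Z} = \mathrm{id}_{\Z}$ forces $f(\x_1) \cap \Z = \emptyset$; your explicit treatment of this point is a welcome clarification rather than a different argument.
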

	\begin{proof}
		Since $f$ is injective and satisfies \AM 1, it induces an injection $\x_1 \fl \x_2$. Since $f$ is also surjective, it follows from Lemma \ref{lem:surj} that $\x_2 \subset f(\x_1)$ and $\ex_2 \subset f(\ex_2)$. Therefore, $f$ induces a bijection from $\x_1$ to $\x_2$. If moreover $f$ satisfies \AM 2, then $f$ induces an injection from $\ex_1$ to $\ex_2$ and thus it induces a bijection from $\ex_1$ to $\ex_2$.
	\end{proof}

	\begin{defi}[Isomorphic seeds]
		Two seeds $\Sigma=(\x_1,\ex_1,B^1)$ and $\Sigma_2=(\x_2,\ex_2,B^2)$ are called \emph{isomorphic} if there exists a bijection $\phi : \x_1 \fl \x_2$ inducing a bijection $\phi: \ex_1 \fl \ex_2$ and such that $b^2_{\phi(x),\phi(y)} = b^1_{xy}$ for every $x,y \in \x_1$. We then write $\Sigma_1 \simeq \Sigma_2$ and $B^1 \simeq B^2$.
	\end{defi}

	\begin{defi}[Opposite seed]
		Given a seed $\Sigma=(\x,\ex,B)$, the \emph{opposite seed} is $$\Sigma^{\op} = (\x,\ex,-B).$$
	\end{defi}

	\begin{defi}[Simplification of a seed]\label{defi:simplification}
		Given a seed $\Sigma=(\x,\ex,B)$, we set $\overline B=(\overline b_{xy})_{x,y \in \x} \in M_{\x}(\Z)$ where
		$$\overline b_{xy} = \left\{\begin{array}{ll}
			0 & \textrm{ if } x,y \in \x \setminus \ex \\
			b_{xy} & \textrm{ otherwise. }
		\end{array}\right.$$
		The \emph{simplification of the seed} $\Sigma$ is defined as $\overline \Sigma=(\x,\ex,\overline B)$.
	\end{defi}

	\begin{rmq}
		In terms of valued quivers, simplifying the seed simply corresponds to removing all the arrows between the frozen points. An example is shown in Figure \ref{fig:quivertriangulation} where the arrows between frozen points are shown dashed.
	\end{rmq}

	\begin{defi}[Locally rooted cluster morphism]
		Let $\Sigma_1 = (\x_1,\ex_1,B^1)$ and $\Sigma_2 = (\x_2,\ex_2,B^2)$ be two seeds. A ring homomorphism $f$ from $\mathcal A(\Sigma_1)$ to $\mathcal A(\Sigma_2)$ is called a \emph{locally rooted cluster morphism} if it satisfies \AM 1, \AM 2 and
		\begin{tabularx}{15cm}{rX}
			\AM {3loc}: & for any $x \in \ex_1$ and any $y \in \x_1$, we have, $f(\mu_{x,\Sigma_1}(y)) = \mu_{f(x),\Sigma_2}(f(y))$.
		\end{tabularx}
	\end{defi}

	As we now prove, for bijective ring homomorphisms, it is possible to simplify considerably the condition \AM 3 (compare \cite[Proposition 2.4]{ASS:automorphisms}).
	\begin{lem}\label{lem:amlocambij}
		Let $\Sigma_1$ and $\Sigma_2$ be two seeds and let $f: \mathcal A(\Sigma_1) \fl \mathcal A(\Sigma_2)$ be a bijective locally rooted cluster morphism. Then~:
		\begin{enumerate}
			\item $f$ is a rooted cluster morphism~;
			\item $\overline {\Sigma_1} \simeq \overline {\Sigma_2}$ or $\overline {\Sigma_1} \simeq (\overline{\Sigma_2})^{\op}$.
		\end{enumerate}
	\end{lem}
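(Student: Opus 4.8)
The plan is to leverage the bijectivity of $f$ together with Corollary \ref{corol:bij} to obtain a bijection $\phi : \x_1 \fl \x_2$ restricting to a bijection $\ex_1 \fl \ex_2$, and then to bootstrap from the single-mutation hypothesis \AM{3loc} to the full \AM 3 by an induction on the length of a biadmissible sequence. The key observation is that, since $f$ is bijective and sends clusters bijectively onto clusters, applying one mutation $\mu_{x_1}$ on the source produces a new seed $\mu_{x_1}(\Sigma_1)$ whose cluster is again sent bijectively by $f$ onto the cluster of $\mu_{f(x_1)}(\Sigma_2)$; so one wants to show that $f$ is \emph{again} a bijective locally rooted cluster morphism $\mathcal A(\mu_{x_1}(\Sigma_1)) \fl \mathcal A(\mu_{f(x_1)}(\Sigma_2))$ — but $\mathcal A(\mu_{x_1}(\Sigma_1)) = \mathcal A(\Sigma_1)$ and $\mathcal A(\mu_{f(x_1)}(\Sigma_2)) = \mathcal A(\Sigma_2)$ as subalgebras of the ambient fields, only the distinguished initial seed has changed. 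Thus the statement \AM{3loc} for the mutated seeds is exactly the length-$2$ case of \AM 3 for the original ones, and iterating gives all lengths.

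Concretely, I would first record the base facts: by Corollary \ref{corol:bij}, $f$ induces bijections $\x_1 \fl \x_2$ and $\ex_1 \fl \ex_2$; in particular \AM 1 and \AM 2 hold automatically for $f$ viewed between any pair of mutation-equivalent seeds, since mutation only swaps one cluster variable for another and one exchangeable variable for another. Next, for part (2), I would apply \AM{3loc} with $y = x$ an exchangeable variable: the exchange relation $x \mu_x(x) = \prod_{b^1_{xy}>0} y^{b^1_{xy}} + \prod_{b^1_{xy}<0} y^{-b^1_{xy}}$ maps under $f$ to the corresponding relation in $\Sigma_2$, and since $f$ is a ring homomorphism identifying $\x_1$ with $\x_2$ via $\phi$, comparing the two binomials (which are coprime in the respective polynomial rings, and whose factorizations into irreducibles are therefore forced) shows that for each exchangeable $x$ either $\{y : b^1_{xy}>0\}$ matches $\{z : b^2_{\phi(x)z}>0\}$ with equal exponents (the case $\overline{\Sigma_1} \simeq \overline{\Sigma_2}$) or the two signs are globally swapped (the case $\overline{\Sigma_1} \simeq (\overline{\Sigma_2})^{\op}$); a connectedness/consistency argument across the exchangeable part then shows the choice of sign is uniform, giving $\overline{b^1_{xy}} = \pm\, \overline{b^2_{\phi(x)\phi(y)}}$ with a single global sign. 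Here I would use that $\overline B$ forgets precisely the entries $b_{xy}$ with $x,y$ both frozen, which are exactly the entries invisible to all exchange relations.

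For part (1), I would set up the induction: suppose $(x_1,\dots,x_n)$ is $(f,\Sigma_1,\Sigma_2)$-biadmissible. The case $n=1$ is \AM{3loc}. For the inductive step, observe that $f$ is also a bijective ring homomorphism $\mathcal A(\mu_{x_1}\Sigma_1) \fl \mathcal A(\mu_{f(x_1)}\Sigma_2)$ (same underlying algebras and map, new initial seeds), it satisfies \AM 1 and \AM 2 for these seeds by the base facts above, and it satisfies \AM{3loc} for them precisely because the mutation rule for the exchange matrix commutes with the bijection $\phi$ once \AM{3loc} holds for $\Sigma_1,\Sigma_2$ — this is the length-$2$ instance of the identity we are proving, and it follows by a direct computation from part (2) together with the fact (stated in the excerpt) that mutation is involutive and that $B'[f(\x)]$ transforms compatibly. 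Hence by the induction hypothesis applied to $\mu_{x_1}\Sigma_1, \mu_{f(x_1)}\Sigma_2$ and the biadmissible sequence $(x_2,\dots,x_n)$, we get the desired equality, and composing with the $n=1$ step for the first mutation finishes it.

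The main obstacle I expect is the bookkeeping in the inductive step: one must verify carefully that ``$f$ commutes with a single mutation at the level of seeds'' (not just cluster variables) — i.e.\ that $f(\mu_{x_1}\Sigma_1)$, or rather its exchange matrix, is carried by $\phi$ to the exchange matrix of $\mu_{f(x_1)}\Sigma_2$ — so that \AM{3loc} is genuinely available for the mutated pair. This is where part (2) is used: the sign-coherence of the exchange matrices (up to a global op) is exactly what guarantees that the piecewise-linear mutation rule for $B$ is intertwined by $\phi$, so that the single-mutation hypothesis propagates. Once this compatibility is in place, the induction is routine; the delicate point throughout is that bijectivity forces $f$ to match irreducible binomials in the Laurent polynomial rings, and one should invoke the Laurent phenomenon and the transcendence-basis property (as in the proof of Lemma \ref{lem:surj}) to pin down that no extra frozen monomials can appear on the target side.
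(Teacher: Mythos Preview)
Your proposal is correct and follows essentially the same route as the paper: both invoke Corollary~\ref{corol:bij} to get the bijections $\x_1 \to \x_2$ and $\ex_1 \to \ex_2$, compare the two exchange binomials via \AM{3loc} and algebraic independence of $\x_2$ to establish~(2), and then deduce~(1) from~(2). The paper is terser---it dispatches~(1) in one sentence (``since the mutations are entirely encoded in the simplifications $\overline{\Sigma_i}$, it follows easily'') where you spell out the induction on the length of a biadmissible sequence, and it does not explicitly address the sign-consistency (connectedness) issue you flag---but the underlying argument is the same.
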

	\begin{proof}
		For $i \in \ens{1,2}$, we set $\Sigma_i=(\x_i,\ex_i,B^i)$. Let $f$ be as in the hypothesis. It follows from Corollary \ref{corol:bij} that $f$ induces bijections from $\x_1$ to $\x_2$ and from $\ex_1$ to $\ex_2$.

		For every $x \in \ex_1$, we have
		\begin{align*}
			f(\mu_x(x)) 
				& = f\left(\frac{1}{x} \left(\prod_{\substack{z \in \x_1~;\\ b^1_{xz}>0}} z^{b^1_{xz}} + \prod_{\substack{z \in \x_1~;\\ b^1_{xz}<0}} z^{-b^1_{xz}}\right)\right) \\
				& = \frac{1}{f(x)} \left(\prod_{\substack{z \in \x_1~;\\ b^1_{xz}>0}} f(z)^{b^1_{xz}} + \prod_{\substack{z \in \x_1~;\\ b^1_{xz}<0}} f(z)^{-b^1_{xz}}\right). \\
		\end{align*}
		Because $f$ satisfies \AM {3loc}, we have
		\begin{align*}
			f(\mu_x(x)) 
				& = \mu_{f(x)}(f(x)) \\
				& = \frac{1}{f(x)} \left(\prod_{\substack{f(z) \in \x_2~;\\ b^2_{f(x)f(z)}>0}} f(z)^{b^2_{f(x)f(z)}} + \prod_{\substack{f(z) \in \x_2~;\\ b^2_{f(x)f(z)}<0}} f(z)^{-b^2_{f(x)f(z)}}\right)
		\end{align*}
		and because $\x_2$ is algebraically independent, we get $b^1_{xz} = b^2_{f(x)f(z)}$ for any $x \in \ex_1$ and any $z \in \x_1$ or $b^1_{xz} = -b^2_{f(x)f(z)}$ for any $x \in \ex_1$ and any $z \in \x_1$, that is, $\overline {B^1} \simeq \overline {B^2}$ or $\overline {B^1} \simeq (-\overline {B^2})$. Therefore, $\overline {\Sigma_1} \simeq \overline {\Sigma_2}$ or $\overline {\Sigma_1} \simeq (\overline \Sigma_2)^{\op}$.

		Since the mutations in $\mathcal A(\Sigma_1)$ and $\mathcal A(\Sigma_2)$ are entirely encoded in the simplifications $\overline {\Sigma_1}$ and $\overline {\Sigma_2}$ of the exchange matrices, it follows easily that $f$ is a rooted cluster morphism.
	\end{proof}

	\begin{theorem}\label{theorem:iso}
		Let $\mathcal A(\Sigma_1)$ and $\mathcal A(\Sigma_2)$ be two rooted cluster algebras. Then $\mathcal A(\Sigma_1)$ and $\mathcal A(\Sigma_2)$ are isomorphic in $\amr$ if and only if $\overline \Sigma_1 \simeq \overline \Sigma_2$ or $\overline \Sigma_1 \simeq \overline \Sigma_2^{\op}$.
	\end{theorem}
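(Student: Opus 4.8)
The plan is to prove the two implications separately, using Lemma \ref{lem:amlocambij} as the workhorse in both directions. For \emph{necessity}, suppose $f : \mathcal A(\Sigma_1) \fl \mathcal A(\Sigma_2)$ is an isomorphism in $\amr$; its underlying ring homomorphism is then bijective (with inverse the underlying ring homomorphism of $f^{-1}$) and is a rooted cluster morphism, so it satisfies \AM1 and \AM2. I would then apply Corollary \ref{corol:bij} to conclude that $f$ induces bijections $\x_1 \fl \x_2$ and $\ex_1 \fl \ex_2$. In particular $f(x) \in \ex_2$ for every $x \in \ex_1$, so each one-term sequence $(x)$ with $x \in \ex_1$ is $(f,\Sigma_1,\Sigma_2)$-biadmissible; evaluating \AM3 on these sequences yields exactly \AM{3loc}. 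Thus $f$ is a bijective locally rooted cluster morphism and Lemma \ref{lem:amlocambij}(2) gives $\overline{\Sigma_1} \simeq \overline{\Sigma_2}$ or $\overline{\Sigma_1} \simeq (\overline{\Sigma_2})^{\op}$.

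For \emph{sufficiency}, where the actual construction lies, I would first dispose of the opposite-seed case. For any seed $\Sigma$, the exchange relation $xx' = \prod_{b_{xy}>0} y^{b_{xy}} + \prod_{b_{xy}<0} y^{-b_{xy}}$ and the portion of the matrix-mutation rule involving exchangeable indices only use entries $b_{xy}$ with $x$ exchangeable, that is, only the entries of $\overline B$; moreover replacing $B$ by $-B$ merely interchanges the two monomials in the numerator of the exchange relation. Hence $\mathcal A(\Sigma)$ and $\mathcal A(\Sigma^{\op})$ have the same cluster variables inside $\mathcal F_\Sigma = \mathcal F_{\Sigma^{\op}}$, and the identity map of $\mathcal F_\Sigma$ is a bijective locally rooted cluster morphism $\mathcal A(\Sigma) \fl \mathcal A(\Sigma^{\op})$, hence a rooted cluster isomorphism by Lemma \ref{lem:amlocambij}(1). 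Since $\overline{\Sigma^{\op}} = (\overline\Sigma)^{\op}$, this reduces the problem to the case $\overline{\Sigma_1} \simeq \overline{\Sigma_2}$.

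So assume $\phi : \x_1 \fl \x_2$ is a bijection inducing a bijection $\ex_1 \fl \ex_2$ with $\overline b^2_{\phi(x)\phi(y)} = \overline b^1_{xy}$ for all $x,y \in \x_1$, and let $\phi$ also denote the isomorphism of ambient fields $\mathcal F_{\Sigma_1} \fl \mathcal F_{\Sigma_2}$ it induces. Since both matrix mutation and cluster-variable mutation are determined by the simplified exchange matrix (as observed in the proof of Lemma \ref{lem:amlocambij}), a straightforward induction on the length of admissible sequences shows that $\phi$ carries each cluster variable of $\mathcal A(\Sigma_1)$ to a cluster variable of $\mathcal A(\Sigma_2)$, and symmetrically that $\phi^{-1}$ does the same in the other direction; hence $\phi(\mathcal X_{\Sigma_1}) = \mathcal X_{\Sigma_2}$ and $\phi$ restricts to a ring isomorphism $\mathcal A(\Sigma_1) \fl \mathcal A(\Sigma_2)$. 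It satisfies \AM1 and \AM2 by construction, and the length-one case of the induction gives \AM{3loc}, so $\phi$ is a bijective locally rooted cluster morphism, hence a rooted cluster morphism by Lemma \ref{lem:amlocambij}(1); the same applies to $\phi^{-1}$, so $\phi$ is an isomorphism in $\amr$. When only $\overline{\Sigma_1} \simeq (\overline{\Sigma_2})^{\op}$ holds, one composes the resulting isomorphism $\mathcal A(\Sigma_1) \fl \mathcal A(\Sigma_2^{\op})$ with the identity isomorphism $\mathcal A(\Sigma_2^{\op}) \fl \mathcal A(\Sigma_2)$ produced above.

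The step I expect to be the main obstacle is the surjectivity claim hidden in the sufficiency direction: one must check that the field isomorphism attached to $\phi$ carries $\mathcal A(\Sigma_1)$ \emph{onto} $\mathcal A(\Sigma_2)$ and not merely into $\mathcal F_{\Sigma_2}$. This is what forces the inductive argument to be run for $\phi$ and $\phi^{-1}$ in tandem, and it relies crucially on the fact — isolated in the proof of Lemma \ref{lem:amlocambij} — that all the mutation data of a rooted cluster algebra is encoded in the simplification of its seed. The opposite-seed bookkeeping ($\overline{\Sigma^{\op}} = (\overline\Sigma)^{\op}$ and the invariance of the exchange relation under $B \mapsto -B$) is routine, but it does have to be written out explicitly to justify the reduction.
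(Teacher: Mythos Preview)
Your proof is correct and follows essentially the same route as the paper's: both directions hinge on Corollary~\ref{corol:bij} and Lemma~\ref{lem:amlocambij}, and the converse is established by transporting the seed isomorphism to a field isomorphism and checking it respects mutations. The paper dispatches the sufficiency direction in one sentence (``it is easily seen that $f_\sigma$ is a rooted cluster isomorphism''), whereas you spell out the inductive transport of cluster variables and the symmetric argument for $\phi^{-1}$ needed to get surjectivity onto $\mathcal A(\Sigma_2)$; your separate reduction of the opposite-seed case is a minor organizational variant of what the paper handles uniformly.
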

	\begin{proof}
		As $\am$ is a concrete category, an isomorphism $f:\mathcal A(\Sigma_1) \fl \mathcal A(\Sigma_2)$ is necessarily bijective. Therefore, Corollary \ref{corol:bij} and Lemma \ref{lem:amlocambij} imply that $f$ induces a bijection $\x_1 \xrightarrow{\sim} \x_2$ such that $\overline {B^1} \simeq \overline {B^2}$ or $\overline {B^1} \simeq (\overline {B^2})^{\op}$. Moreover, it also follows from Corollary \ref{corol:bij} that $f$ is a bijection $\ex_1 \xrightarrow{\sim} \ex_2$ so that it induces an isomorphism of seeds $\overline \Sigma_1 \simeq \overline \Sigma_2$ or $\overline \Sigma_1 \simeq \overline \Sigma_2^{\op}$.

		Conversely, if $\overline \Sigma_1 \simeq \overline \Sigma_2$ or $\overline \Sigma_1 \simeq \overline \Sigma_2^{\op}$, we consider the bijection $\sigma : \x_1 \fl \x_2$ inducing the isomorphism of seeds. It thus induces naturally a ring isomorphism $f_\sigma : \mathcal F_{\Sigma_1} \fl \mathcal F_{\Sigma_2}$ and it is easily seen that $f_\sigma$ is a rooted cluster isomorphism.
	\end{proof}

	\begin{corol}\label{corol:iso}
		The isomorphisms in $\amr$ coincide with the bijective rooted cluster morphisms.
	\end{corol}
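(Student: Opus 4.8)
The plan is to prove the two inclusions separately. The inclusion \textquotedblleft isomorphism $\Rightarrow$ bijective rooted cluster morphism\textquotedblright{} is immediate: as noted already in the proof of Theorem \ref{theorem:iso}, $\amr$ is a concrete category, so the forgetful functor carries a categorical inverse of $f$ to a set-theoretic inverse of the underlying map of $f$; hence any isomorphism is in particular a bijective rooted cluster morphism.

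For the converse, I would start with an arbitrary bijective rooted cluster morphism $f \colon \mathcal A(\Sigma_1) \fl \mathcal A(\Sigma_2)$ and show that its set-theoretic inverse $g = f^{-1}$ is again a rooted cluster morphism; since the identity map is trivially a rooted cluster morphism, this exhibits a two-sided inverse to $f$ in $\amr$, so $f$ is an isomorphism. First, $g$ is a ring isomorphism, being the inverse of one. Next, by Corollary \ref{corol:bij}, $f$ restricts to bijections $\x_1 \fl \x_2$ and $\ex_1 \fl \ex_2$; consequently $g$ restricts to bijections $\x_2 \fl \x_1$ and $\ex_2 \fl \ex_1$, which is in particular \AM 1 and \AM 2 for $g$.

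The remaining point, and the one requiring care, is \AM 3 for $g$. Given a $(g,\Sigma_2,\Sigma_1)$-biadmissible sequence $(z_1, \ldots, z_n)$ in $\mathcal A(\Sigma_2)$, I would set $x_i = g(z_i)$ and check, by unwinding the definitions, that $(x_1, \ldots, x_n)$ is $(f,\Sigma_1,\Sigma_2)$-biadmissible: it is literally $(g(z_1), \ldots, g(z_n))$, hence $\Sigma_1$-admissible, and $(f(x_1), \ldots, f(x_n)) = (z_1, \ldots, z_n)$ is $\Sigma_2$-admissible. For $w \in \x_2$ one may write $w = f(y)$ with $y = g(w) \in \x_1$, using the bijection $\x_1 \fl \x_2$ of Corollary \ref{corol:bij}. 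Then applying \AM 3 for $f$ to the sequence $(x_i)$ and composing on the left with $g = f^{-1}$ gives
\[ g\bigl(\mu_{z_n} \circ \cdots \circ \mu_{z_1, \Sigma_2}(w)\bigr) = g\bigl(f(\mu_{x_n} \circ \cdots \circ \mu_{x_1, \Sigma_1}(y))\bigr) = \mu_{x_n} \circ \cdots \circ \mu_{x_1, \Sigma_1}(y) = \mu_{g(z_n)} \circ \cdots \circ \mu_{g(z_1), \Sigma_1}(g(w)), \]
which is exactly \AM 3 for $g$.

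The only genuine obstacle is the bookkeeping in this last step: one must be careful that the transported sequence is biadmissible in the right sense, and that the identification $w = f(g(w))$ with $g(w) \in \x_1$ is legitimate; both rest on Corollary \ref{corol:bij}. No heavier machinery (Theorem \ref{theorem:iso} or Lemma \ref{lem:amlocambij}) seems needed, although the statement could alternatively be read off from them together with the explicit construction in the converse part of the proof of Theorem \ref{theorem:iso}.
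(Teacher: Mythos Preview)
Your proof is correct and takes a genuinely different route from the paper's. The paper invokes Lemma \ref{lem:amlocambij} to deduce that a bijective rooted cluster morphism forces $\overline{\Sigma_1} \simeq \overline{\Sigma_2}$ or $\overline{\Sigma_1} \simeq \overline{\Sigma_2}^{\op}$, and then (implicitly via the converse direction of Theorem \ref{theorem:iso}) concludes that $f$ is an isomorphism. Your argument is more elementary: you bypass the seed-level classification entirely and verify directly that the set-theoretic inverse $g=f^{-1}$ satisfies \AM 1--\AM 3, using only Corollary \ref{corol:bij} and a clean ``transport of biadmissibility'' computation. What your approach buys is self-containment and independence from the structural result about seeds; what the paper's approach buys is brevity, given that Lemma \ref{lem:amlocambij} and Theorem \ref{theorem:iso} are already in hand. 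You correctly anticipate this alternative in your final remark.
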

	\begin{proof}
		As $\amr$ is a concrete category, isomorphisms are bijective. Conversely, if we consider a bijective rooted cluster morphism $\mathcal A(\Sigma_1) \fl \mathcal A(\Sigma_2)$, then it follows from Lemma \ref{lem:amlocambij} that $f$ induces an isomorphism of seeds $\overline \Sigma_1 \simeq \overline \Sigma_2$ or $\overline \Sigma_1 \simeq \overline \Sigma_2^{\op}$ so that $f$ is an isomorphism in $\amr$.
	\end{proof}

	\begin{corol}\label{corol:isoclus}
		Let $\mathcal A(\Sigma_1)$ and $\mathcal A(\Sigma_2)$ be two rooted cluster algebras and let $f: \mathcal A(\Sigma_1) \fl \mathcal A(\Sigma_2)$ be a rooted cluster isomorphism. Then the following hold~:
		\begin{enumerate}
			\item any $\Sigma_1$-admissible sequence is $(f,\Sigma_1,\Sigma_2)$-biadmissible~;
			\item any $\Sigma_2$-admissible sequence lifts to a unique $(f,\Sigma_1,\Sigma_2)$-biadmissible sequence~;
			\item $f(\mathcal X_{\Sigma_1}) = \mathcal X_{\Sigma_2}$. \hfill \qed
		\end{enumerate}
	\end{corol}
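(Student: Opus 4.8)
The plan is to derive all three assertions from condition \AM 3, using the two facts already available for a rooted cluster isomorphism $f\colon \mathcal A(\Sigma_1) \fl \mathcal A(\Sigma_2)$: it restricts to bijections $\x_1 \fl \x_2$ and $\ex_1 \fl \ex_2$ (Corollary \ref{corol:bij}), and its inverse $f^{-1}$ is again a rooted cluster isomorphism (Corollary \ref{corol:iso}, since $\amr$ is a category). Only item (1) requires an argument; items (2) and (3) then follow formally, each by a symmetric use of $f^{-1}$.

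For (1) I would fix a $\Sigma_1$-admissible sequence $(x_1, \ldots, x_n)$ and prove by induction on $k$, for $0 \le k \le n$, that $(f(x_1), \ldots, f(x_k))$ is $\Sigma_2$-admissible. The case $k=0$ is vacuous. For the inductive step, the inductive hypothesis together with the $\Sigma_1$-admissibility of $(x_1, \ldots, x_{k-1})$ makes that sequence $(f,\Sigma_1,\Sigma_2)$-biadmissible, so \AM 3 may be applied to it; in particular the mutated seed $\Sigma_2^{(k-1)} = \mu_{f(x_{k-1})} \circ \cdots \circ \mu_{f(x_1),\Sigma_2}(\Sigma_2)$ is well-defined, as is $\Sigma_1^{(k-1)} = \mu_{x_{k-1}} \circ \cdots \circ \mu_{x_1,\Sigma_1}(\Sigma_1)$. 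By the very definition of mutation the exchangeable variables of $\Sigma_i^{(k-1)}$ are exactly the images of the elements of $\ex_i$ under the relabelling $\mu_{x_{k-1}} \circ \cdots \circ \mu_{x_1}$ (resp.\ $\mu_{f(x_{k-1})} \circ \cdots \circ \mu_{f(x_1)}$). Since $x_k$ is exchangeable in $\Sigma_1^{(k-1)}$, there is thus some $w \in \ex_1$ with $x_k = \mu_{x_{k-1}} \circ \cdots \circ \mu_{x_1,\Sigma_1}(w)$; applying \AM 3 with $y = w$ gives $f(x_k) = \mu_{f(x_{k-1})} \circ \cdots \circ \mu_{f(x_1),\Sigma_2}(f(w))$, and since $f(w) \in \ex_2$ this is an exchangeable variable of $\Sigma_2^{(k-1)}$. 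Hence $(f(x_1), \ldots, f(x_k))$ is $\Sigma_2$-admissible, completing the induction; taking $k=n$ gives exactly the biadmissibility of $(x_1, \ldots, x_n)$.

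Item (2) is then immediate: if $(z_1, \ldots, z_n)$ is $\Sigma_2$-admissible, applying (1) to the rooted cluster isomorphism $f^{-1}$ shows that $(f^{-1}(z_1), \ldots, f^{-1}(z_n))$ is $\Sigma_1$-admissible and $(f^{-1},\Sigma_2,\Sigma_1)$-biadmissible, which means precisely that it is a $(f,\Sigma_1,\Sigma_2)$-biadmissible sequence lifting $(z_1, \ldots, z_n)$; uniqueness is forced by the injectivity of $f$. For item (3), any cluster variable of $\mathcal A(\Sigma_1)$ has the form $v = \mu_{x_n} \circ \cdots \circ \mu_{x_1,\Sigma_1}(y)$ for some (possibly empty) $\Sigma_1$-admissible sequence and some $y \in \x_1$; part (1) makes that sequence biadmissible, so \AM 3 yields $f(v) = \mu_{f(x_n)} \circ \cdots \circ \mu_{f(x_1),\Sigma_2}(f(y))$ with $f(y) \in \x_2$, exhibiting $f(v)$ as a cluster variable of $\mathcal A(\Sigma_2)$. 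This gives $f(\mathcal X_{\Sigma_1}) \subseteq \mathcal X_{\Sigma_2}$, and the same argument applied to $f^{-1}$ gives the reverse inclusion, hence equality.

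The delicate point is the induction in (1): \AM 3 only transports mutations of elements of $\x_1$ along biadmissible sequences, so it does not a priori say anything about exchangeability in the mutated seed $\Sigma_2^{(k-1)}$. The trick is to arrange the induction so that the sequence needed to invoke \AM 3 is always strictly shorter than the one under consideration, and to pull $x_k$ back to an element $w$ of the \emph{initial} exchangeable set $\ex_1$ (whose image under $f$ is known to lie in $\ex_2$ by Corollary \ref{corol:bij}) rather than arguing with $x_k$ directly.
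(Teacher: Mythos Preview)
Your proof is correct. The paper records no argument beyond the \qed: the corollary is meant to be immediate from the preceding results, since by Lemma~\ref{lem:amlocambij} and Theorem~\ref{theorem:iso} a rooted cluster isomorphism induces an isomorphism of simplified seeds $\overline{\Sigma_1} \simeq \overline{\Sigma_2}$ or $\overline{\Sigma_1} \simeq (\overline{\Sigma_2})^{\op}$, and mutations depend only on the simplified seed, so the mutation dynamics of $\Sigma_1$ and $\Sigma_2$ are identified under $f$ and all three assertions drop out at once.

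Your route is a little different and arguably more self-contained: rather than invoking the exchange-matrix comparison, you argue directly from the bijection $\ex_1 \to \ex_2$ of Corollary~\ref{corol:bij} together with an inductive use of \AM 3, pulling each $x_k$ back to the initial exchangeable set to transport exchangeability through $f$. This isolates exactly the ingredients needed and avoids the detour through the seed isomorphism; the paper's intended one-line argument, by contrast, gets everything for free once that isomorphism is in hand. Either would serve as the omitted proof.
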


	\begin{rmq}
		For non-bijective morphisms, one can find locally rooted cluster morphisms which are not rooted cluster morphisms. For instance, let $\Sigma_1=(\x_1,\ex_1,B^1)$ and $\Sigma_2=(\x_2,\ex_2,B^2)$ 
		where
		$$\ex_1 = \x_1= \ens{x_1,x_2,x_3}, \quad \ex_2 = \x_2 = \ens{u_1,u_2},$$
		$$B^1= \left[\begin{array}{rrr} 0 & 1 & 0 \\ -1 & 0 & -1 \\ 0 & 1 & 0 \end{array}\right]
		\textrm{ and }
		B^2= \left[\begin{array}{rr} 0 & 1 \\ -2 & 0 \end{array}\right]$$
		and consider the ring homomorphism
		$$\pi : \left\{\begin{array}{rcl}
				\Q(x_1,x_2,x_3) & \fl & \Q(u_1,u_2) \\
				x_1,x_3 & \mapsto & u_1 \\
				x_2 & \mapsto & u_2. \\
		\end{array}\right.$$

		Then
		$$\pi(\mu_{x_1}(x_1)) = \pi(\mu_{x_3}(x_3)) = \pi \left(\frac{1+x_2}{x_1}\right) = \frac{1+u_2}{u_1} = \mu_{u_1}(u_1)$$
		and
		$$\pi(\mu_{x_2}(x_2)) = \pi \left(\frac{1+x_1x_3}{x_2}\right) = \frac{1+u_1^2}{u_2} = \mu_{u_2}(u_2)$$
		so that $\pi$ commutes with biadmissible sequences of length one. 

		However
		$$\mu_{x_2}(\x_1) = \ens{x_1,x_2'=\frac{1+x_1x_3}{x_2},x_3}$$
		$$(\mu_{x_1} \circ \mu_{x_2})(\x_1) = \ens{\frac{1+x_2+x_1x_3}{x_1x_2},x_2',x_3}$$
		and
		$$(\mu_{x_2'} \circ \mu_{x_1} \circ \mu_{x_2})(\x_1) = \ens{\frac{1+x_2+x_1x_3}{x_1x_2},\frac{1+x_2}{x_1},x_3}$$
		but
		$$\mu_{u_2}(\x_2) = \ens{u_1,u_2'=\frac{1+u_1^2}{u_2}}$$
		$$(\mu_{u_1} \circ \mu_{u_2})(\x_2) = \ens{\frac{1+u_2+u_1^2}{u_1u_2},u_2'}$$
		and
		$$(\mu_{u_2'} \circ \mu_{u_1} \circ \mu_{u_2})(\x_2) = \ens{\frac{1+u_2+u_1^2}{u_1u_2},\frac{1+2u_2+u_2^2+u_1^2}{u_1^2u_2}}.$$
		so that
		$$\pi\left(\frac{1+x_2}{x_1}\right) = \frac{1+u_2}{u_1} \neq \frac{1+2u_2+u_2^2+u_1^2}{u_1^2u_2}$$
		and thus $\pi$ is not a rooted cluster morphism between the cluster algebras $\mathcal A(\Sigma_1)$ and $\mathcal A(\Sigma_2)$.

		Note that the morphism $\pi$ is induced by the folding of the quiver $\xymatrix{Q_{B^1}: 1 & \ar[l] 2 \ar[r] & 3}$ with respect to the automorphism group exchanging 1 and 3. For general results concerning the interactions of foldings with cluster algebras, we refer the reader for instance to \cite{Demonet:categorification}.
	\end{rmq}

	\begin{rmq}
		\begin{enumerate}
			\item[a)] Two rooted cluster algebras associated with mutation-equivalent seeds are \emph{not} necessarily isomorphic in the category $\amr$ since mutation-equivalent seeds are in general neither isomorphic nor opposite.
			\item[b)] The \emph{cluster automorphisms} considered in \cite{ASS:automorphisms} correspond in our context to rooted cluster isomorphisms from $\mathcal A(\Sigma)$ to itself when $\Sigma$ is finite, skew-symmetric and coefficient-free. The groups of cluster automorphisms have been computed for seeds associated with Dynkin or affine quivers, see \cite[\S 3.3]{ASS:automorphisms}.
			\item[c)] A strong isomorphism $\mathcal A(\Sigma_1) \fl \mathcal A(\Sigma_2)$ in the sense of \cite{cluster2} is a rooted cluster isomorphism such that $\overline{\Sigma_1} \simeq \overline{\Sigma_2}$.
		\end{enumerate}
	\end{rmq}

\section{Rooted cluster monomorphisms}\label{section:mono}
	We recall that a \emph{monomorphism} in a category is a morphism $f$ such that if there exist morphisms $g$ and $h$ such that $fg=fh$, then $g=h$.

	\begin{lem}\label{lem:injfrozen}
		Let $\Sigma =(\x,\ex,B)$ be a seed, let $\y \subset \x$ and let $\Theta = (\y,\emptyset,C)$ be another seed. Then the canonical ring homomorphism $\mathcal F_{\Theta} \fl \mathcal F_{\Sigma}$ induces an injective rooted cluster morphism $\mathcal A(\Theta) \fl \mathcal A(\Sigma)$.
	\end{lem}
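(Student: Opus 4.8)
The plan is to exploit the fact that $\Theta$ is about as simple as a seed can be: it has no exchangeable variables. First I would record that, consequently, $\Mut(\Theta) = \ens{\Theta}$, so that $\mathcal A(\Theta) = \Z[y \ | \ y \in \y]$ is a polynomial ring, exactly as in the earlier example of a rooted cluster algebra with no exchangeable variables. Next, since $\y \subset \x$ and $\x$ is a set of indeterminates over $\Z$ (hence algebraically independent over $\Q$), the subset $\y$ is algebraically independent as well; this makes the canonical assignment $y \mapsto y$, $y \in \y$, extend to a well-defined homomorphism of fields $\iota : \mathcal F_\Theta \fl \mathcal F_\Sigma$, which is automatically injective. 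Restricting $\iota$ to $\mathcal A(\Theta) = \Z[\y] \subseteq \Z[\x] \subseteq \mathcal A(\Sigma)$ yields a ring homomorphism $f : \mathcal A(\Theta) \fl \mathcal A(\Sigma)$ that is injective because $\iota$ is.

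Then I would check the three axioms of a rooted cluster morphism for $f$. Condition \AM 1 holds since $f(\y) = \y \subset \x \subset \x \sqcup \Z$. Condition \AM 2 is vacuous, the exchangeable set of $\Theta$ being empty. For \AM 3 the crucial remark is that, as $\Theta$ has no exchangeable variable, the empty sequence is the only $\Theta$-admissible sequence, hence the only $(f,\Theta,\Sigma)$-biadmissible one, and for it \AM 3 asserts nothing beyond $f(y) = f(y)$ for $y \in \y$. This completes the verification that $f$ is a rooted cluster morphism, injectivity having already been established.

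I do not expect a genuine obstacle here. The only two points that deserve a line of argument are (i) the well-definedness and injectivity of $\iota$, both consequences of the algebraic independence of $\x$, and (ii) the observation that \AM 3 is vacuous. It is worth emphasising that the identification $\mathcal A(\Theta) = \Z[\y]$ relies on the fact that in our setting frozen variables are not inverted; this is the feature that makes the lemma useful in the sequel, as flagged in the earlier remark concerning this statement.
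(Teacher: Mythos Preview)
Your proposal is correct and follows essentially the same approach as the paper: both observe that $\mathcal A(\Theta)=\Z[\y]\subset\Z[\x]\subset\mathcal A(\Sigma)$ gives the induced ring homomorphism, that \AM 1 and \AM 2 hold trivially since $f(\y)=\y\subset\x$ and the exchangeable set is empty, and that \AM 3 is vacuous for the same reason. You are simply a bit more explicit about injectivity and about the role of non-invertible coefficients, which the paper defers to the subsequent remark.
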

	\begin{proof}
		The canonical ring homomorphism $\mathcal F_{\Theta} \fl \mathcal F_{\Sigma}$ sends $x$ to $x$ for any $x \in \y$ therefore, it satisfies \AM 1 and \AM 2. Moreover, since there are no exchangeable variables in $\Theta$, it automatically satisfies \AM 3. We thus only have to prove that it induces a ring homomorphism $\mathcal A(\Theta) \fl \mathcal A(\Sigma)$, and this is clear because $\mathcal A(\Theta) = \Z[x\ | \ x \in \y] \subset \Z[x \ | \ x \in \x] \subset \mathcal A(\Sigma)$.
	\end{proof}

	\begin{rmq}
		In order for Lemma \ref{lem:injfrozen} to hold, it is necessary to consider non-invertible coefficients because if the image of a frozen variable is exchangeable then the image of its inverse would have to be the inverse of the exchangeable variable, which is not in the cluster algebra.
	\end{rmq}

	\begin{prop}\label{prop:mono}
		Monomorphisms in $\amr$ coincide with injective rooted cluster morphisms.
	\end{prop}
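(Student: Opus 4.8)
The plan is to prove the two inclusions separately. One direction is essentially formal: in any concrete category, every injective morphism is a monomorphism, since $\amr$ has underlying sets and the forgetful functor to $\mathbf{Set}$ reflects monomorphisms — if $f$ is injective and $fg=fh$ then $f\circ g$ and $f\circ h$ agree as set maps, hence $g$ and $h$ agree as set maps, hence as rooted cluster morphisms. So the content is the converse: a rooted cluster monomorphism $f\colon \mathcal A(\Sigma_1)\fl\mathcal A(\Sigma_2)$ must be injective.

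\medskip

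For the converse, the natural strategy is the standard ``test object'' argument: I want to produce, from the hypothetical kernel, two distinct rooted cluster morphisms into $\mathcal A(\Sigma_1)$ that $f$ fails to separate. The simplest test object is a one-variable polynomial ring $\mathcal A(\Theta) = \Z[t]$, where $\Theta=(\{t\},\emptyset,[0])$ is the coefficient-free seed with a single frozen variable; by the Example following the definition of rooted cluster algebra this is a legitimate object of $\amr$, and by Lemma~\ref{lem:injfrozen}-style reasoning any ring homomorphism $\Z[t]\fl \mathcal A(\Sigma_1)$ sending $t$ to a frozen variable or to an integer is a rooted cluster morphism (it satisfies \AM1, \AM2 trivially and \AM3 vacuously since $\Theta$ has no exchangeable variables). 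The difficulty is that the source and target of the two test morphisms must be genuine rooted cluster morphisms, and the obvious candidates — ``send $t\mapsto 0$'' versus ``send $t\mapsto z$'' for some $0\neq z\in\ker f$ — require $z$ to be a \emph{frozen variable} or an integer of $\mathcal A(\Sigma_1)$, not an arbitrary kernel element, because of the constraint \AM1. So I expect the main obstacle to be exactly this: showing that if $f$ is not injective then its kernel contains an element of a controllable form (a frozen variable, up to sign, or more generally something one can feed through \AM1), or else arguing more cleverly.

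\medskip

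To handle this, I would first analyse what non-injectivity of $f$ forces. Since $f$ is a ring homomorphism satisfying \AM1, it maps the cluster $\x_1$ into $\x_2\sqcup\Z$; the cluster $\x_1$ is algebraically independent over $\Z$ (it is a transcendence basis of $\mathcal F_{\Sigma_1}$), so $f$ restricted to the polynomial subring $\Z[\x_1]$ can only have kernel if $f$ identifies two distinct elements of $\x_1$ or sends some $x\in\x_1$ to an integer — in either case $f$ is not injective on $\Z[\x_1]$ precisely when the induced set map $\x_1\fl\x_2\sqcup\Z$ is not injective. But non-injectivity of $f$ on all of $\mathcal A(\Sigma_1)$ is subtler, because $\mathcal A(\Sigma_1)$ is not a polynomial ring. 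The cleanest route is: suppose $P\in\ker f$ with $P\neq 0$; write $P$ as a $\Z$-polynomial (Laurent, with polynomial coefficients in the frozens) in the cluster variables of $\Sigma_1$, apply $f$, and use that $\x_2$ is algebraically independent to derive that the set map $\x_1\fl\x_2\sqcup\Z$ must already fail to be injective — i.e. there exist $x\neq x'$ in $\x_1$ with $f(x)=f(x')$, or some $x\in\x_1$ with $f(x)\in\Z$. In the first case at least one of $x,x'$ is frozen or both are, but in fact I can reduce to the frozen case: if $x$ is exchangeable then by \AM2 $f(x)\in\ex_2\sqcup\Z$, and one can mutate; it is cleaner to observe directly that the two distinct cluster morphisms $g,h\colon \mathcal A(\Theta)\fl\mathcal A(\Sigma_1)$ defined by $g(t)=x$ and $h(t)=x'$ satisfy $f\circ g=f\circ h$ but $g\neq h$, contradicting monomorphy — and these $g,h$ are valid rooted cluster morphisms by the frozen-variable lemma only when $x,x'$ are frozen or integers, so the remaining work is to argue the exchangeable subcase away (e.g. using that the frozen variables together with a single exchangeable-as-frozen construction still give valid test seeds, or by composing with an earlier-established construction). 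In the second case, $f(x)\in\Z$ for some cluster variable $x$: then $g(t)=x$ and $h(t)=f(x)\cdot 1$ are distinct rooted cluster morphisms $\mathcal A(\Theta)\fl\mathcal A(\Sigma_1)$ — wait, $h$ has wrong target; instead take $g,h\colon \mathcal A(\Theta)\fl\mathcal A(\Sigma_1)$ with $g(t)=x$, $h(t)=f(x)$, noting $f(x)\in\Z\subset\mathcal A(\Sigma_1)$, so $f\circ g=f\circ h$ while $g(t)=x\neq f(x)=h(t)$ in $\mathcal A(\Sigma_1)$ (as $x$ is transcendental over $\Z$), the desired contradiction. Assembling these cases, non-injectivity of $f$ contradicts $f$ being a monomorphism, completing the proof.
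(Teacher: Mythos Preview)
Your approach is essentially the paper's: reduce non-injectivity of $f$ to a failure on the cluster $\x_1$ (either some $x\in\x_1$ lands in $\Z$, or two distinct cluster variables collide), then build two distinct test morphisms from a seed with only frozen variables that $f$ equalises. The one point where you hesitate is a non-issue: the morphisms $g,h\colon \Z[t]\to\mathcal A(\Sigma_1)$ with $g(t)=x$, $h(t)=x'$ are valid rooted cluster morphisms \emph{regardless} of whether $x,x'$ are frozen or exchangeable, because the source seed $\Theta=(\{t\},\emptyset,[0])$ has no exchangeable variables --- hence \AM2 and \AM3 are vacuous, and \AM1 only asks that $g(t),h(t)\in\x_1\sqcup\Z$, which any element of $\x_1$ satisfies. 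So there is no ``exchangeable subcase to argue away''; once you drop that worry your argument is complete. (For the record, the paper treats the collision case with the two-variable test object $\Z[x,y]$ and the swap $g(x)=h(y)=x$, $g(y)=h(x)=y$, but your one-variable version works just as well.)
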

	\begin{proof}
		Let $\Sigma_i = (\x_i,\ex_i,B^i)$ be seeds for $1 \leq i \leq 3$ and consider rooted cluster morphisms
		$$\xymatrix{\mathcal A(\Sigma_1) \ar@<+2pt>[r]^g \ar@<-2pt>[r]_h & \mathcal A(\Sigma_2) \ar[r]^f & \mathcal A(\Sigma_3)}.$$
		Since $\amr$ is a concrete category, every injective morphism in $\amr$ is a monomorphism. We thus only need to prove the converse.

		Let $f$ be a non-injective rooted cluster morphism. Because it satisfies \AM 1, we get $f(\x_2) \subset \x_3 \sqcup \Z$. If $f(\x_2) \subset \x_3$ and if the restriction of $f$ to $\x_2$ is injective, $f$ sends a transcendence basis of $\mathcal F_{\Sigma_2}$ to an algebraically independent family in $\mathcal F_{\Sigma_3}$ and therefore, it induces an injective ring homomorphism $\mathcal F_{\Sigma_2} \fl \mathcal F_{\Sigma_3}$ so that it is itself injective, a contradiction. Thus, there are two cases to consider~:
		\begin{itemize}
			\item there exists $x \in \x_2$ such that $f(x) \in \Z$,
			\item there exist $x, y \in \x_2$ such that $x \neq y$ and $f(x) = f(y)$.
		\end{itemize}
		
		In the first case, it follows from Lemma \ref{lem:injfrozen} that we can consider the rooted cluster morphisms $h,g : \Z[x] \fl \mathcal A(\Sigma_2)$ given by $g(x)=x$ and $h(x)=f(x) \in \Z$. Then $fh(x) = fg(x) = f(x)$ so that $fh = fg$ but $f \neq g$. Thus $f$ is not a monomorphism in $\amr$.

		In the second case, it also follows from Lemma \ref{lem:injfrozen} that we can consider the rooted cluster morphisms $h,g: \Z[x,y] \fl \mathcal A(\Sigma_2)$ given by $g(x) = h(y) = x$ and  $g(y) = h(x) = y$. Thus, $fg=fh$ but $g \neq h$ and therefore $f$ is not a monomorphism in $\amr$.
	\end{proof}

	As a consequence, the study of monomorphisms in $\amr$ restricts to the study of injective rooted cluster morphisms.
	\begin{lem}\label{lem:injsubseed}
		Let $\Sigma_1 = (\x_1,\ex_1,B_1)$ and $\Sigma_2=(\x_2,\ex_2,B^2)$ be two seeds and let $f: \mathcal A(\Sigma_1) \fl \mathcal A(\Sigma_2)$ be an injective rooted cluster morphism. Then $f$ induces an isomorphism of seeds $\overline \Sigma_1 \simeq \overline{f(\Sigma_1)}$ or $\overline \Sigma_1 \simeq (\overline{f(\Sigma_1)})^{\op}$. 
	\end{lem}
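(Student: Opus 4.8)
The plan is to factor $f$ through its image seed $f(\Sigma_1)$ and then apply Lemma~\ref{lem:amlocambij}. First I would note that, since $\x_1\subset\mathcal A(\Sigma_1)$ and $f$ is an injective ring homomorphism, no $x\in\x_1$ can satisfy $f(x)\in\Z$ (otherwise $x-f(x)$ would be a non-zero element of $\ker f$); hence \AM1 forces $f(\x_1)\subset\x_2$ and $f|_{\x_1}$ is injective, so $\x_2\cap f(\x_1)=f(\x_1)$. As $f(\ex_1)\subset f(\x_1)\subset\x_2$, condition \AM2 then gives $f(\ex_1)\subset\ex_2$, whence $\ex_2\cap f(\ex_1)=f(\ex_1)$. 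Therefore $f(\Sigma_1)=(f(\x_1),f(\ex_1),B^2[f(\x_1)])$ and $\phi:=f|_{\x_1}$ is a bijection $\x_1\xrightarrow{\sim}f(\x_1)$ restricting to a bijection $\ex_1\xrightarrow{\sim}f(\ex_1)$; it extends to a ring isomorphism $\mathcal F_{\Sigma_1}\xrightarrow{\sim}\mathcal F_{f(\Sigma_1)}$.

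The crucial step is to show that $f$ corestricts to a ring isomorphism $\bar f:\mathcal A(\Sigma_1)\xrightarrow{\sim}\mathcal A(f(\Sigma_1))$, that is, that $f(\mathcal A(\Sigma_1))=\mathcal A(f(\Sigma_1))$ (in other words, that $f$ is ideal; compare Corollary~\ref{corol:injideal}). The inclusion $\mathcal A(f(\Sigma_1))\subset f(\mathcal A(\Sigma_1))$ is exactly Lemma~\ref{lem:image}. For the reverse inclusion I would run the forward analogue of the lifting argument used to prove Lemma~\ref{lem:image}: using \AM2, \AM3 and Lemma~\ref{lem:mutimage}, an induction on $l$ shows that for every $\Sigma_1$-admissible sequence $(x_1,\ldots,x_l)$ the sequence $(f(x_1),\ldots,f(x_l))$ is $f(\Sigma_1)$-admissible (since $f$ carries the exchangeable variables of $\mu_{x_i}\circ\cdots\circ\mu_{x_1}(\Sigma_1)$ into those of $\mu_{f(x_i)}\circ\cdots\circ\mu_{f(x_1)}(f(\Sigma_1))$), and moreover that for every $y\in\x_1$
\begin{align*}
	f\big(\mu_{x_l}\circ\cdots\circ\mu_{x_1,\Sigma_1}(y)\big)
	&= \mu_{f(x_l)}\circ\cdots\circ\mu_{f(x_1),\Sigma_2}(f(y))\\
	&= \mu_{f(x_l)}\circ\cdots\circ\mu_{f(x_1),f(\Sigma_1)}(f(y)) \in \mathcal A(f(\Sigma_1)),
\end{align*}
the first equality by \AM3 for $f$ and the second by Lemma~\ref{lem:mutimage}. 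Hence $f$ carries every cluster variable of $\Sigma_1$ into $\mathcal A(f(\Sigma_1))$, which gives $f(\mathcal A(\Sigma_1))\subset\mathcal A(f(\Sigma_1))$; since $f$ is injective, $\bar f$ is a ring isomorphism.

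It then remains to check that $\bar f$ is a bijective locally rooted cluster morphism. Conditions \AM1 and \AM2 for $\bar f$ hold by the first paragraph, because $\bar f(\x_1)$ is the cluster of $f(\Sigma_1)$ and $\bar f(\ex_1)$ its set of exchangeable variables. For \AM{3loc}: if $x\in\ex_1$ and $y\in\x_1$ with $y\neq x$, then $\bar f(\mu_{x,\Sigma_1}(y))=f(y)=\mu_{\bar f(x),f(\Sigma_1)}(\bar f(y))$, and
\begin{align*}
	\bar f(\mu_{x,\Sigma_1}(x)) = f(\mu_{x,\Sigma_1}(x))
	&= \mu_{f(x),\Sigma_2}(f(x))\\
	&= \mu_{f(x),f(\Sigma_1)}(f(x)) = \mu_{\bar f(x),f(\Sigma_1)}(\bar f(x))
\end{align*}
by \AM3 for $f$ and Lemma~\ref{lem:mutimage}. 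Thus $\bar f:\mathcal A(\Sigma_1)\fl\mathcal A(f(\Sigma_1))$ is a bijective locally rooted cluster morphism, and Lemma~\ref{lem:amlocambij} applied to $\bar f$ yields $\overline{\Sigma_1}\simeq\overline{f(\Sigma_1)}$ or $\overline{\Sigma_1}\simeq(\overline{f(\Sigma_1)})^{\op}$, which is the claim. The only genuinely non-routine point is the inclusion $f(\mathcal A(\Sigma_1))\subset\mathcal A(f(\Sigma_1))$ of the second paragraph; everything else is unwinding the definitions together with Lemmas~\ref{lem:mutimage}, \ref{lem:image}, and \ref{lem:amlocambij}.
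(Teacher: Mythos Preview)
Your proof is correct, but it takes a more circuitous route than the paper's, and in doing so you have essentially reversed the logical order of Lemma~\ref{lem:injsubseed} and Corollary~\ref{corol:injideal}.

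The paper's argument is entirely local: once one observes $f(\x_1)\subset\x_2$ and $f(\ex_1)\subset\ex_2$, one simply writes out $f(\mu_{x,\Sigma_1}(x))=\mu_{f(x),\Sigma_2}(f(x))$ for each $x\in\ex_1$ and compares the two monomial expressions using the algebraic independence of $\x_2$. This yields directly that $b^2_{f(x)z}=0$ for $z\notin f(\x_1)$ and $b^2_{f(x)f(y)}=\pm b^1_{xy}$ with a consistent sign --- no idealness, no factoring, just the one exchange relation. This is precisely the computation hidden inside the proof of Lemma~\ref{lem:amlocambij}, applied pointwise.

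Your approach instead packages this computation by invoking Lemma~\ref{lem:amlocambij} as a black box. To do so legitimately you need $\bar f:\mathcal A(\Sigma_1)\to\mathcal A(f(\Sigma_1))$ to be a well-defined bijective ring homomorphism, which forces you to establish $f(\mathcal A(\Sigma_1))\subset\mathcal A(f(\Sigma_1))$ first. Your inductive argument for this (showing every $\Sigma_1$-admissible sequence becomes $f(\Sigma_1)$-admissible, hence biadmissible via Lemma~\ref{lem:mutimage}) is correct, but note that in the paper this inclusion is exactly Corollary~\ref{corol:injideal}, proved \emph{after} Lemma~\ref{lem:injsubseed} and \emph{using} it to obtain biadmissibility. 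So you have traded a three-line local computation for a full proof of idealness followed by a lemma that internally performs that same three-line computation anyway. The upside is that your argument yields Corollary~\ref{corol:injideal} simultaneously; the downside is that what you flag as ``the only genuinely non-routine point'' is, from the paper's perspective, not needed at all for the lemma at hand.
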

	\begin{proof}
		As $f$ is injective and satisfies \AM 1, we have $f(\x_1) \subset \x_2$ and since it satisfies \AM 2, we have $f(\ex_1) = \ex_2 \cap f(\x_1)$. Let $x \in \ex_1$. Since $f$ satisfies \AM 3, we have $f(\mu_{x,\Sigma_1}(x)) = \mu_{f(x),\Sigma_2}(f(x))$ so that 
		$$\prod_{\substack{y \in \x_1,\\ b^1_{xy}>0}} f(y)^{b^1_{xy}} + \prod_{\substack{y \in \x_1,\\ b^1_{xy}<0}} f(y)^{-b^1_{xy}} = \prod_{\substack{z \in \x_2,\\ b^2_{f(x)z}>0}} z^{b^2_{f(x)z}} + \prod_{\substack{z \in \x_2,\\ b^2_{f(x)z}<0}} z^{-b^2_{f(x)z}}.$$
		Therefore, $b^2_{f(x)z} = 0$ for any $z \not \in f(\x_1)$ and either $b^2_{f(x)z}=b^1_{xy}$ for any $y \in \x_1$ such that $f(y)=z$ or $b^2_{f(x)z}=-b^1_{xy}$ for any $y \in \x_1$ such that $f(y)=z$, which proves the lemma.
	\end{proof}

	\begin{corol}\label{corol:injideal}
		Any injective rooted cluster morphism is ideal.
	\end{corol}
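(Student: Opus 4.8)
The plan is to combine the inclusion $\mathcal A(f(\Sigma_1)) \subseteq f(\mathcal A(\Sigma_1))$ already furnished by Lemma \ref{lem:image} with the reverse inclusion, which rests on Lemma \ref{lem:injsubseed}. Fix an injective rooted cluster morphism $f : \mathcal A(\Sigma_1) \fl \mathcal A(\Sigma_2)$ with $\Sigma_i = (\x_i,\ex_i,B^i)$. Since $f$ is injective and satisfies \AM 1 and \AM 2, one has $f(\x_1) \subseteq \x_2$ and $f(\ex_1) = \ex_2 \cap f(\x_1)$, so $\sigma := f|_{\x_1}$ is a bijection from $\x_1$ onto the cluster of $f(\Sigma_1)$ which carries $\ex_1$ onto the exchangeable variables of $f(\Sigma_1)$; and by Lemma \ref{lem:injsubseed} this $\sigma$ realises a seed isomorphism $\overline{\Sigma_1} \simeq \overline{f(\Sigma_1)}$ or $\overline{\Sigma_1} \simeq (\overline{f(\Sigma_1)})^{\op}$.

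First I would observe that $\sigma$ extends to a field isomorphism $f_\sigma : \mathcal F_{\Sigma_1} \xrightarrow{\ \sim\ } \mathcal F_{f(\Sigma_1)}$ (note $\mathcal F_{f(\Sigma_1)} \subseteq \mathcal F_{\Sigma_2}$ because $f(\x_1) \subseteq \x_2$), and that $f_\sigma$ coincides with $f$ on $\mathcal A(\Sigma_1)$: every cluster variable of $\mathcal A(\Sigma_1)$ is, by the Laurent phenomenon, a Laurent polynomial in $\x_1$, and applying the ring homomorphism $f$ to the corresponding polynomial identity shows that $f$ and $f_\sigma$ agree on it. Hence $f(\mathcal A(\Sigma_1)) = f_\sigma(\mathcal A(\Sigma_1))$.

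Then I would invoke the converse direction of Theorem \ref{theorem:iso}: since $\overline{\Sigma_1} \simeq \overline{f(\Sigma_1)}$ or $\overline{\Sigma_1} \simeq (\overline{f(\Sigma_1)})^{\op}$, the bijection $\sigma$ induces a rooted cluster isomorphism $\mathcal A(\Sigma_1) \fl \mathcal A(f(\Sigma_1))$, which is exactly the restriction of $f_\sigma$; here I use that $\mathcal A(\Sigma)$ depends on $\Sigma$ only through its simplification $\overline\Sigma$ (as recorded in the proof of Lemma \ref{lem:amlocambij}) and that $\mathcal A(\Sigma^{\op}) = \mathcal A(\Sigma)$. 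By Corollary \ref{corol:isoclus}(3) we then have $f_\sigma(\mathcal X_{\Sigma_1}) = \mathcal X_{f(\Sigma_1)}$, and since $\mathcal A(\Sigma_1)$ and $\mathcal A(f(\Sigma_1))$ are the $\Z$-subalgebras generated by their respective sets of cluster variables, we conclude $f(\mathcal A(\Sigma_1)) = f_\sigma(\mathcal A(\Sigma_1)) = \mathcal A(f(\Sigma_1))$, i.e.\ $f$ is ideal.

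I expect the only delicate point to be the bookkeeping around simplifications and opposite seeds: namely checking that the seed isomorphism produced by Lemma \ref{lem:injsubseed} is genuinely implemented by $f$ itself rather than by some abstract isomorphism, and that passing to $\overline{(-)}$ and to $\op$ alters neither the underlying ring $\mathcal A$ nor the set $\mathcal X$ of cluster variables. An alternative, more computational route avoids Theorem \ref{theorem:iso} entirely: prove by induction on $n$ that every $\Sigma_1$-admissible sequence $(x_1,\dots,x_n)$ is $(f,\Sigma_1,f(\Sigma_1))$-biadmissible (the inductive step uses \AM 3 for $f$ together with Lemma \ref{lem:mutimage}, exactly as in the proof of Lemma \ref{lem:image} run in the opposite direction), and then deduce from \AM 3 and Lemma \ref{lem:mutimage} that $f$ sends each cluster variable $\mu_{x_n}\circ\cdots\circ\mu_{x_1,\Sigma_1}(y)$ of $\mathcal A(\Sigma_1)$ to the cluster variable $\mu_{f(x_n)}\circ\cdots\circ\mu_{f(x_1),f(\Sigma_1)}(f(y))$ of $\mathcal A(f(\Sigma_1))$, whence $f(\mathcal A(\Sigma_1)) \subseteq \mathcal A(f(\Sigma_1))$.
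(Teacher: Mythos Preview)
Your proposal is correct, and in fact your ``alternative, more computational route'' at the end is precisely the argument the paper gives: the paper reduces to simplified seeds, invokes Lemma~\ref{lem:injsubseed} to conclude that every $\Sigma_1$-admissible sequence is $(f,\Sigma_1,\Sigma_2)$-biadmissible, and then, for each cluster variable $z=\mu_{x_l}\circ\cdots\circ\mu_{x_1,\Sigma_1}(x)$, combines \AM 3 with Lemma~\ref{lem:mutimage} to identify $f(z)$ with $\mu_{f(x_l)}\circ\cdots\circ\mu_{f(x_1),f(\Sigma_1)}(f(x))\in\mathcal A(f(\Sigma_1))$; the reverse inclusion is Lemma~\ref{lem:image}.

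Your primary approach, by contrast, packages the same content through the isomorphism theory of Section~\ref{section:iso}: once Lemma~\ref{lem:injsubseed} gives $\overline{\Sigma_1}\simeq\overline{f(\Sigma_1)}$ or $\overline{\Sigma_1}\simeq(\overline{f(\Sigma_1)})^{\op}$ via $\sigma=f|_{\x_1}$, you appeal to the converse direction of Theorem~\ref{theorem:iso} to deduce that $f_\sigma$ is a rooted cluster isomorphism $\mathcal A(\Sigma_1)\to\mathcal A(f(\Sigma_1))$, and then to Corollary~\ref{corol:isoclus} for $f_\sigma(\mathcal X_{\Sigma_1})=\mathcal X_{f(\Sigma_1)}$. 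This is a genuinely different route: it is more conceptual and yields the equality $f(\mathcal A(\Sigma_1))=\mathcal A(f(\Sigma_1))$ in one stroke, but it relies on the (admittedly light) machinery of Theorem~\ref{theorem:iso} and on the observation that $f$ and $f_\sigma$ agree on $\mathcal A(\Sigma_1)$. The paper's route is more self-contained and has the side benefit of making Corollary~\ref{corol:clustervarinj} an immediate byproduct of the argument rather than an input to it.
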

	\begin{proof}
		Let $\Sigma = (\x,\ex,B)$ and $\Sigma'=(\x',\ex',B')$ be two seeds and let $f: \mathcal A(\Sigma) \fl \mathcal A(\Sigma')$ be an injective rooted cluster morphism. As a cluster algebra does only depend on the simplification of the seed, we can assume that both $\Sigma = \overline \Sigma$ and $\Sigma' = \overline {\Sigma'}$. According to Lemma \ref{lem:injsubseed}, $f$ induces an isomorphism of seeds $\Sigma \simeq f(\Sigma)$ or $\Sigma \simeq \overline{f(\Sigma)}$. It follows that every $\Sigma$-admissible sequence is $(f,\Sigma,\Sigma')$-biadmissible. 

		In order to prove that $f$ is ideal, it is enough to prove that for any cluster variable $x$ in $\mathcal A(\Sigma)$, the variable $f(x)$ is an element in $\mathcal A(f(\Sigma))$. Let thus $z$ be a cluster variable in $\mathcal A(\Sigma)$. Then either $z$ is a frozen variable in $\Sigma$ and thus $f(z)$ is a frozen variable in $f(\Sigma)$ and we are done, or there exists a $\Sigma$-admissible sequence $(x_1, \ldots, x_l)$ such that $z = \mu_{x_l} \circ \cdots \circ \mu_{x_1}(x)$ for some $x \in \ex$. Then $(x_1, \ldots, x_l)$ is $(f,\Sigma,\Sigma')$-biadmissible and because $f$ satisfies \AM 3, we get
		$$f(z) = f(\mu_{x_l} \circ \cdots \circ \mu_{x_1,\Sigma}(x)) = \mu_{f(x_l)} \circ \cdots \circ \mu_{f(x_1),\Sigma'}(f(x))$$
		but it follows from Lemma \ref{lem:mutimage} that 
		$$\mu_{f(x_l)} \circ \cdots \circ \mu_{f(x_1),\Sigma'}(f(x)) = \mu_{f(x_l)} \circ \cdots \circ \mu_{f(x_1),f(\Sigma)}(f(x)).$$
		Therefore, $f(z)$ is a cluster variable in $\mathcal A(f(\Sigma))$ and thus $f(\mathcal A(\Sigma)) \subset \mathcal A(f(\Sigma))$. The reverse inclusion follows from Lemma \ref{lem:image} and therefore $f$ is ideal.
	\end{proof}

	As a byproduct of the proof of Corollary \ref{corol:injideal}, we obtain~:
	\begin{corol}\label{corol:clustervarinj}
		Let $\Sigma_1,\Sigma_2$ be two seeds and $f: \mathcal A(\Sigma_1) \fl \mathcal A(\Sigma_2)$ be an injective rooted cluster morphism. Then~:
		\begin{enumerate}
			\item any $\Sigma_1$-admissible sequence is $(f,\Sigma_1,\Sigma_2)$-biadmissible,
			\item $f(\mathcal X_{\Sigma_1}) \subset \mathcal X_{\Sigma_2}$. \hfill \qed
		\end{enumerate}
	\end{corol}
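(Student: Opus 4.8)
Corollary \ref{corol:clustervarinj} states two things about an injective rooted cluster morphism $f: \mathcal A(\Sigma_1) \fl \mathcal A(\Sigma_2)$: (1) every $\Sigma_1$-admissible sequence is $(f,\Sigma_1,\Sigma_2)$-biadmissible, and (2) $f(\mathcal X_{\Sigma_1}) \subset \mathcal X_{\Sigma_2}$. Both of these were literally established inside the proof of Corollary \ref{corol:injideal}, so the plan is essentially to extract them.

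Let me reconstruct the argument:

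For (1): By Lemma \ref{lem:injsubseed}, since $f$ is an injective rooted cluster morphism, it induces an isomorphism of seeds $\overline{\Sigma_1} \simeq \overline{f(\Sigma_1)}$ or $\overline{\Sigma_1} \simeq (\overline{f(\Sigma_1)})^{\op}$. In particular $f$ maps $\x_1$ injectively into $\x_2$ (by \AM 1 and injectivity) and $\ex_1$ into $\ex_2$ (by \AM 2), and the exchange combinatorics of $\Sigma_1$ are identified (up to sign) with those of the subseed $f(\Sigma_1)$. Now Lemma \ref{lem:mutimage} tells us that mutation inside $f(\Sigma_1)$ (viewed as a seed) agrees with mutation inside $\Sigma_2$. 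So given any $\Sigma_1$-admissible sequence $(x_1,\ldots,x_l)$, its image $(f(x_1),\ldots,f(x_l))$ is $f(\Sigma_1)$-admissible (because $f$ carries the seed $\overline{\Sigma_1}$ isomorphically onto $\overline{f(\Sigma_1)}$ and exchangeability is preserved step by step, using \AM 3), hence $\Sigma_2$-admissible by Lemma \ref{lem:mutimage}. Therefore $(x_1,\ldots,x_l)$ is $(f,\Sigma_1,\Sigma_2)$-biadmissible.

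For (2): Let $z \in \mathcal X_{\Sigma_1}$. If $z$ is a frozen variable of $\Sigma_1$, then $f(z) \in \x_2$ is a cluster variable of $\Sigma_2$. Otherwise $z = \mu_{x_l} \circ \cdots \circ \mu_{x_1,\Sigma_1}(x)$ for some $x \in \ex_1$ and some $\Sigma_1$-admissible sequence $(x_1,\ldots,x_l)$. By part (1) this sequence is $(f,\Sigma_1,\Sigma_2)$-biadmissible, so by \AM 3 we have $f(z) = \mu_{f(x_l)} \circ \cdots \circ \mu_{f(x_1),\Sigma_2}(f(x))$, which is by definition a cluster variable of $\mathcal A(\Sigma_2)$, i.e. $f(z) \in \mathcal X_{\Sigma_2}$.

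I don't expect a genuine obstacle here, since the entire content has already been done in the proof of Corollary \ref{corol:injideal}; the only mild care needed is the inductive verification in part (1) that exchangeability is genuinely transported along $f$ at every mutation step — but that is exactly the content of the ``lifting'' argument in Lemma \ref{lem:image} combined with \AM 3 and Lemma \ref{lem:injsubseed}, so I would simply cite it. The corollary is marked with \qed in the statement, which signals the authors intend no separate proof beyond pointing back to Corollary \ref{corol:injideal}; accordingly the ``proof'' is really just the observation ``This was shown in the course of proving Corollary \ref{corol:injideal}.''
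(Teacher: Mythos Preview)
Your proposal is correct and matches the paper's approach exactly: the paper introduces this corollary with ``As a byproduct of the proof of Corollary \ref{corol:injideal}, we obtain'' and marks it \qed, so there is no separate proof, and the two assertions are precisely the ones you extracted (Lemma \ref{lem:injsubseed} giving the seed isomorphism, hence biadmissibility of all $\Sigma_1$-admissible sequences, and then \AM 3 yielding $f(z)\in\mathcal X_{\Sigma_2}$ for every cluster variable $z$). Your final paragraph correctly identifies the authors' intent.
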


	We recall that a seed $\Sigma=(\x,\ex,B)$ is called~:
	\begin{itemize}
		\item of \emph{finite cluster type} if $\mathcal X_\Sigma$ is finite,
		\item \emph{acyclic} if the valued quiver obtained by deleting the arrows between frozen vertices in $Q_{B}$ has no oriented cycles,
		\item \emph{mutation-finite} if the mutation class of the exchange matrix of $B$ is finite.
	\end{itemize}

	\begin{corol}
		Let $\Sigma_1,\Sigma_2$ be two seeds and $f: \mathcal A(\Sigma_1) \fl \mathcal A(\Sigma_2)$ be an injective rooted cluster morphism. Then~:
		\begin{enumerate}
			\item If $\Sigma_2$ is of finite cluster type, then so is $\Sigma_1$~;
			\item If $\Sigma_2$ is acyclic, then so is $\Sigma_1$~;
			\item If $\Sigma_2$ is mutation-finite, then so is $\Sigma_1$.
		\end{enumerate}
	\end{corol}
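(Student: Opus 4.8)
The plan is to read off all three items from Corollary~\ref{corol:clustervarinj} and Lemma~\ref{lem:injsubseed}, the latter applied both to $f$ and, for item (3), to the morphisms that $f$ induces between mutated seeds. Write $\Sigma_i=(\x_i,\ex_i,B^i)$. I would first record the standing facts: since $f$ is injective and satisfies \AM{1}, no variable of $\x_1$ can be sent to $\Z$ (otherwise $f$ would kill a nonzero element of the polynomial subring generated by that variable), so $f(\x_1)\subset\x_2$; and by Lemma~\ref{lem:injsubseed} there is an isomorphism of seeds $\overline{\Sigma_1}\simeq\overline{f(\Sigma_1)}$ or $\overline{\Sigma_1}\simeq(\overline{f(\Sigma_1)})^{\op}$, where $f(\Sigma_1)=(f(\x_1),\ex_2\cap f(\x_1),B^2[f(\x_1)])$. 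Consequently the quiver of $\overline{f(\Sigma_1)}$ is obtained from the full sub-quiver of $Q_{\overline{B^2}}$ on the vertex set $f(\x_1)$ by possibly deleting some further arrows, so in particular $Q_{\overline{B^1}}$ is, up to opposite, a sub-quiver of $Q_{\overline{B^2}}$.

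Items (1) and (2) then follow immediately. For (1): $f$ restricts to an injection on $\mathcal X_{\Sigma_1}$ and $f(\mathcal X_{\Sigma_1})\subset\mathcal X_{\Sigma_2}$ by Corollary~\ref{corol:clustervarinj}, hence $|\mathcal X_{\Sigma_1}|\leq|\mathcal X_{\Sigma_2}|<\infty$. For (2): a sub-quiver of a quiver with no oriented cycle has no oriented cycle, and neither does the quiver obtained by reversing all arrows; since $\Sigma_2$ being acyclic means exactly that $Q_{\overline{B^2}}$ has no oriented cycle, the description above forces $Q_{\overline{B^1}}$ to have none, i.e.\ $\Sigma_1$ is acyclic.

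For (3) the key point is that an injective rooted cluster morphism stays one after mutating both sides along a biadmissible sequence. Given a $\Sigma_1$-admissible sequence $(x_1,\dots,x_n)$, which is $(f,\Sigma_1,\Sigma_2)$-biadmissible by Corollary~\ref{corol:clustervarinj}, a short verification of \AM{1}, \AM{2} and \AM{3} (using that admissible sequences concatenate and that integers are never mutated) shows that $f$, with unchanged underlying ring map, is an injective rooted cluster morphism from $\mathcal A(\mu_{x_n}\circ\cdots\circ\mu_{x_1}(\Sigma_1))$ to $\mathcal A(\mu_{f(x_n)}\circ\cdots\circ\mu_{f(x_1)}(\Sigma_2))$. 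Applying the first paragraph to this morphism, the simplified exchange matrix of $\mu_{x_n}\circ\cdots\circ\mu_{x_1}(\Sigma_1)$ is, up to isomorphism and opposite, the principal submatrix of the exchange matrix of $\mu_{f(x_n)}\circ\cdots\circ\mu_{f(x_1)}(\Sigma_2)$ cut out by the image of the cluster of $\mu_{x_n}\circ\cdots\circ\mu_{x_1}(\Sigma_1)$; moreover, since all mutation directions $f(x_i)$ lie in the successive images of the $\Sigma_1$-clusters, the variables outside $f(\x_1)$ are never mutated, so the complement of that image is the fixed set $\x_2\setminus f(\x_1)$. Finally, the genuine exchange matrix $B^1_{(x_1,\dots,x_n)}$ is recovered from its simplification by adding back the constant block $B^1[\x_1\setminus\ex_1]$ on the (never mutated) frozen vertices. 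Putting these facts together, $B^1_{(x_1,\dots,x_n)}$ is determined, up to isomorphism and opposite, by the exchange matrix of $\mu_{f(x_n)}\circ\cdots\circ\mu_{f(x_1)}(\Sigma_2)$; as $\Sigma_2$ is mutation-finite the latter takes finitely many values, hence so does the former (using also that being mutation-finite is invariant under isomorphism and under passing to the opposite seed), and $\Sigma_1$ is mutation-finite.

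The hard part is concentrated in (3): checking that $f$ really remains an injective rooted cluster morphism between the two mutated seeds (this is pure \AM{1}--\AM{3} bookkeeping, relying on concatenation of admissible sequences and \AM{2} to preserve exchangeability), together with the routine but slightly fiddly tracking, in the finiteness count, of simplifications, opposite seeds and the fixed frozen block. Items (1) and (2) should each take one line.
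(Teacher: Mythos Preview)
Your arguments for (1) and (2) coincide with the paper's. For (3) you take a different route: the paper simply observes, via Lemma~\ref{lem:injsubseed}, that the valued quiver of $\overline{\Sigma_1}$ is (up to opposite) a full subquiver of that of $\Sigma_2$, and then invokes the standard fact that a full subquiver of a mutation-finite valued quiver is again mutation-finite. You instead reprove this fact inline by showing that $f$ remains an injective rooted cluster morphism after mutating both sides along any biadmissible sequence, and then reapplying Lemma~\ref{lem:injsubseed} to the mutated seeds. Your approach is self-contained but longer; the paper's is a one-liner given the cited fact. One small wrinkle in your final count: mutation-finiteness of $\Sigma_2$ gives only finitely many \emph{isomorphism classes} of exchange matrices for the mutated $\Sigma_2$, and an isomorphism between two such matrices need not respect the partition into the image of the mutated $\Sigma_1$-cluster and its fixed complement, so your ``determined by'' step does not immediately conclude. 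The easy patch (for finite seeds) is that each isomorphism class has only finitely many principal submatrices of size $|\x_1|$, whence finitely many possibilities for the simplified mutated $B^1$; your careful tracking of the fixed complement is then not needed.
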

	\begin{proof}
		The first assertion is a consequence of Corollary \ref{corol:clustervarinj}. The second assertion follows from Lemma \ref{lem:injsubseed} and from the fact that a subquiver of an acyclic (valued) quiver is acyclic. The third assertion follows from Lemma \ref{lem:injsubseed} and from the fact that a full subquiver of a mutation-finite (valued) quiver is mutation-finite.
	\end{proof}

	\subsection{Monomorphisms arising from triangulations of the $n$-gon}\label{ssection:typeA}
		For any integer $m \geq 3$, we denote by $\Pi_m$ the $m$-gon whose points are labelled cyclically from 1 to $m$. For $m \geq 4$, the cluster algebra $\mathcal A(\Pi_m)$ (with coefficients associated with boundary arcs) is a cluster algebra of type $A_{m-3}$. 

		We construct by induction a family $\ens{T_m}_{m \geq 3}$ where each $T_m$ is a \emph{fan triangulation} of $\Pi_m$. We start with the triangle $\Pi_3$ whose points are denoted by 1,2 and 3. For any $m \geq 3$, the triangulation $T_{m+1}$ of $\Pi_{m+1}$ is obtained by gluing a triangle along the boundary arc joining $m$ to 1 in $T_m$ and the new marked point introduced by this triangle is labelled by $m+1$, as shown in the figure below.

		\begin{center}
			\begin{tikzpicture}[scale = .5]
				\tikzstyle{every node}=[font=\tiny]

				\foreach \x in {0,5,10,15}
				{
					\draw (\x+0,2) -- (\x+1,3) -- (\x+2,2) -- cycle;
					\fill (\x+0,2) node [left] {1};
					\fill (\x+1,3) node [above] {2};
					\fill (\x+2,2) node [right] {3};
				}

				\foreach \x in {5,10,15}
				{
					\draw (\x+2,2) -- (\x+2,0) -- (\x,2);
					\fill (\x+2,0) node [right] {4};
				}

				\foreach \x in {10,15}
				{
					\draw (\x+2,0) -- (\x+1,-1) -- (\x,2);
					\fill (\x+1,-1) node [below] {5};
				}

				\foreach \x in {15}
				{
					\draw (\x+1,-1) -- (\x+0,0) -- (\x,2);
					\fill (\x+0,0) node [left] {6};
				}

				\fill (1,-3) node {$T_3$};
				\fill (6,-3) node {$T_4$};
				\fill (11,-3) node {$T_5$};
				\fill (16,-3) node {$T_6$};
			\end{tikzpicture}
		\end{center}

		For any $m \geq 4$, we denote by $\Sigma_m$ the seed associated with $T_m$ in $\mathcal A(\Pi_m)$. Cluster variables in $\mathcal A(\Pi_m)$ are identified with the arcs joining two marked points in $\Pi_m$ and for any $i$ and $j$ such that $1 \leq i<j \leq m$, we denote by $x_{ij}$ the variable corresponding to the arc joining $i$ to $j$. The exchangeable variables are thus the variables corresponding to internal arcs. The exchange relations given by the mutations in $\mathcal A(\Pi_m)$ are the so-called \emph{Plücker relations}~:
		$$x_{ij}x_{kl} = x_{ik}x_{jl} + x_{il}x_{kj} \text{ for } 1 \leq i < j < k < l \leq m.$$

		For any $m' \geq m$, the inclusion of $T_m$ in $T_{m'}$ defines a natural ring monomorphism $j_{m,m'}:\mathcal F_{\Sigma_m} \fl \mathcal F_{\Sigma_{m'}}$. Since arcs (or internal arcs) in $T_m$ are sent to arcs (or internal arcs, respectively) in $T_{m'}$, then $j_{m,m'}$ satisfies \AM 1 and \AM 2. Moreover, since exchange relations in $\mathcal A(\Pi_m)$ and $\mathcal A(\Pi_{m'})$ correspond to Plücker relations in $\Pi_m$ and $\Pi_{m'}$ respectively, it is easily seen that $j_{m,m'}$ satisfies \AM 3. Finally, since every admissible sequence of variables in $\mathcal A(\Pi_m)$ is $(j_{m,m'},\Sigma_m,\Sigma_{m'})$-biadmissible, it follows from the fact that $j_{m,m'}$ preserves the Plücker relations that it commutes with biadmissible mutations and that $j_{m,m'}(\mathcal A(\Pi_m)) \subset \mathcal A(\Pi_{m'})$.

		Therefore, for $m < m'$, we have exhibited an injective rooted cluster morphism
		$$j_{m,m'} : \mathcal A(\Pi_m) \fl \mathcal A(\Pi_{m'}).$$

		\begin{exmp}
			Consider the cluster algebra $\mathcal A(\Pi_4)$ whose cluster variables are shown on the square below.
			\begin{center}
				\begin{tikzpicture}
					\draw[thick] (0,0) -- (0,2) -- (2,2) -- (2,0) -- cycle;
					\fill (0,0) circle (.05);
					\fill (2,0) circle (.05);
					\fill (0,2) circle (.05);
					\fill (2,2) circle (.05);

					\draw (0,0) -- (2,2);
					\draw (0,2) -- (2,0);

					\fill (0,2) node [above] {1};
					\fill (2,2) node [above] {2};
					\fill (2,0) node [below] {3};
					\fill (0,0) node [below] {4};

					\fill (1,2) node [above] {\tiny $x_{1,2}$};
					\fill (1,0) node [below] {\tiny $x_{3,4}$};
					\fill (0,1) node [left] {\tiny $x_{1,4}$};
					\fill (2,1) node [right] {\tiny $x_{2,3}$};

					\fill (.75,1.6) node {\tiny $x_{1,3}$};
					\fill (.75,.3) node {\tiny $x_{2,4}$};
				\end{tikzpicture}
			\end{center}
			Then 
			$$\mathcal A(\Pi_4) = \Z[x_{ij} \ | \ 1 \leq i < j \leq 4]/(x_{13}x_{24} = x_{12}x_{34} + x_{14}x_{23}).$$

			Now consider the cluster algebra $\mathcal A(\Pi_5)$. It has 4 additional cluster variables, two are exchangeable and two are frozen. We show these new variables on the picture below.
			\begin{center}
				\begin{tikzpicture}
			
					\draw[thick,red] (0,0) -- (-1,1);
					\draw[thick,red] (0,2) -- (-1,1);
					\draw[thick] (0,2) -- (2,2) -- (2,0) --(0,0);
					\draw (0,0) -- (0,2);

					\fill (-1,1) circle (.05);
					\fill (0,0) circle (.05);
					\fill (2,0) circle (.05);
					\fill (0,2) circle (.05);
					\fill (2,2) circle (.05);

					\draw (0,0) -- (2,2);
					\draw (0,2) -- (2,0);

					\fill (-1,1) node [left] {5};
					\fill (0,2) node [above] {1};
					\fill (2,2) node [above] {2};
					\fill (2,0) node [below] {3};
					\fill (0,0) node [below] {4};

					\draw[red] (-1,1) -- (2,2);
					\draw[red] (-1,1) -- (2,0);

					\fill[red] (.75,1.7) node {\tiny $x_{2,5}$};
					\fill[red] (.75,.2) node {\tiny $x_{3,5}$};

					\fill[red] (-.3,1.75) node [left] {\tiny $x_{1,5}$};
					\fill[red] (-.3,.25) node [left] {\tiny $x_{4,5}$};
				\end{tikzpicture}
			\end{center}
			Then 
			$$\mathcal A(\Pi_5) = \Z[x_{ij} \ | \ 1 \leq i < j \leq 5]/(x_{ij}x_{kl} = x_{ik}x_{jl} + x_{il}x_{kj} \text{ for } 1 \leq i < j < k < l \leq 5).$$

			In particular, the canonical morphism $j_{4,5}: \Q(x_{ij} \ | \ 1 \leq i < j \leq 4) \fl \Q(x_{ij} \ | \ 1 \leq i < j \leq 5)$ induces an injective ring homomorphism $\mathcal A(\Pi_4) \fl \mathcal A(\Pi_5)$ which is a rooted cluster monomorphism $\mathcal A(\Sigma_{T_4}) \fl \mathcal A(\Sigma_{T_5})$.
		\end{exmp}

		\begin{defi}[Full subseed of a seed]\label{defi:subseed}
			Given a seed $\Sigma=(\x,\ex,B)$ where $\x=(x_i,i \in I)$ and given a subset $J \subset I$, we set $\Sigma_{|J}$ the seed with cluster $\x_{|J}=(x_i, i \in J)$, with exchangeable variables $\ex_{|J} = \ex \cap \x_{|J}$ and exchange matrix $B[J]$. Such a seed is called a \emph{full subseed} of $\Sigma$.
		\end{defi}

		\begin{rmq}
			If $\Sigma'$ is a full subseed of $\Sigma$, the canonical morphism $\mathcal F_{\Sigma'} \fl \mathcal F_{\Sigma}$ does not necessarily induce an injective rooted cluster morphism $\mathcal A(\Sigma') \fl \mathcal A(\Sigma)$. For instance, if $\Sigma_m$ denotes the coefficient-free seed associated with the quiver 
			$$Q_m : 1 \fl 2 \fl \cdots \fl m$$
			for any $m \geq 1$, then the canonical inclusion $\iota : \mathcal F_{\Sigma_m} \fl \mathcal F_{\Sigma_{m+1}}$ does not induce a rooted cluster morphism $\mathcal A(\Sigma_m) \fl \mathcal A(\Sigma_{m+1})$ because 
			$$\iota\left(\mu_{x_{m},\Sigma_m}(x_m)\right) = \iota\left(\frac{1+x_{m-1}}{x_m}\right) \neq \frac{x_{m-1}+x_{m+1}}{x_m} = \mu_{x_m,\Sigma_{m+1}}(x_m).$$

			However, as we shall see in Proposition \ref{prop:res}, if $\Sigma'$ is a full subseed of $\Sigma$, then there is a natural ideal surjective rooted cluster morphism $\mathcal A(\Sigma) \fl \mathcal A(\Sigma')$.
		\end{rmq}

	We now describe a combinatorial operation on seeds which allows one to construct a class of injective rooted cluster morphisms.

	\subsection{Amalgamated sum of seeds}\label{ssection:amalgam}
		Let $\Sigma_1=(\x_1,\ex_1,B^1)$ and $\Sigma_2=(\x_2,\ex_2,B^2)$ be two seeds and $\mathcal A(\Sigma_1)$, $\mathcal A(\Sigma_2)$ be the corresponding rooted cluster algebras. 

		Let $\Delta^1 \subset (\x_1 \setminus \ex_1)$ and $\Delta^2 \subset (\x_2 \setminus \ex_2)$ be two (possibly empty) subsets such that there is an isomorphism of seeds $\Sigma_{1|\Delta_1} \simeq \Sigma_{2|\Delta_2}$. In this case, we say that $\Sigma_1$ and $\Sigma_2$ are \emph{glueable along $\Delta_1$ and $\Delta_2$}.

		Let $\Delta$ be a family of undeterminates in bijection with $\Delta_1$ and $\Delta_2$. We set 
		$$\x_1 \coprod_{\Delta_1,\Delta_2} \x_2 = (\x_1 \setminus \Delta_1) \sqcup (\x_2 \setminus \Delta_2) \sqcup \Delta.$$
		As $\Delta_1$ and $\Delta_2$ consist of frozen variables, $\ex_1$ and $\ex_2$ are naturally identified with two disjoint subsets of $\x_1 \coprod_{\Delta_1,\Delta_2} \x_2$ and we set
		$$\ex_1 \coprod_{\Delta_1,\Delta_2} \ex_2 = \ex_1 \sqcup \ex_2.$$
		
		With respect to the partitions $\x_i = (\x_i \setminus \Delta_i) \sqcup \Delta$, for any $i \in \ens{1,2}$ the matrix $B^i$ can be written~:
		$$B^i = \left[\begin{array}{r|r}
			B^i_{11} & B^i_{12} \\
			\hline
			B^i_{21} & B_{\Delta}
		\end{array}\right]$$
		where the matrices are possibly infinite.

		We then set
		$$B^1 \coprod_{\Delta_1,\Delta_2} B^2 = \left[\begin{array}{c|c|c}
			B^1_{11} & 0 & B^1_{12}\\
			\hline
			0 & B^2_{11} & B^2_{12}\\
			\hline
			B^1_{21} & B^2_{12} & B_{\Delta}
		\end{array}\right]$$

		\begin{defi}[Amalgamated sum of seeds]
			With the above notations, the \emph{amalgamated sum of $\mathcal A(\Sigma_1)$ and $\mathcal A(\Sigma_2)$ along $\Delta_1,\Delta_2$} is the rooted cluster algebra $\mathcal A(\Sigma)$ where $\Sigma=(\x,\ex,B)$ with~:
			\begin{enumerate}
				\item $\x = \x_1 \coprod_{\Delta_1,\Delta_2} \x_2$~;
				\item $\ex = \ex_1 \coprod_{\Delta_1,\Delta_2} \ex_2$~;
				\item $B= B^1 \coprod_{\Delta_1,\Delta_2} B^2$.
			\end{enumerate}
			
			We use the notations $$\Sigma = \Sigma_1 \coprod_{\Delta_1, \Delta_2} \Sigma_2 \text{
			and 
			}\mathcal A(\Sigma) = \mathcal A(\Sigma_1) \coprod_{\Delta_1, \Delta_2} \mathcal A(\Sigma_2).$$
		\end{defi}

		\begin{rmq}
			In terms of valued quivers, the amalgamated sum of exchange matrices corresponds to the amalgamated sum of valued quivers. For instance, the following figure shows an example of amalgamated sum over the subquivers in the shaded area where points corresponding to exchangeable (or frozen) variables are black (or white, respectively).
			\begin{center}
				\begin{tikzpicture}[scale = .75]
				
					\foreach \x in {0,5,10}
					{
						\fill[gray!20] (\x-.2,0) -- (\x+1.1,1.25) -- (\x+1.1,-1.25) -- cycle;

						\fill (\x+.65,0) node {$\Delta$};

						\draw[->] (\x+.25,.25) -- (\x+.75,.75);
						\draw[<-] (\x+.25,-.25) -- (\x+.75,-.75);
						\draw[<-] (\x+1,.75) -- (\x+1,-.75);

						\draw (\x+0,0) circle (.1);
						\draw (\x+1,1) circle (.1);
						\draw (\x+1,-1) circle (.1);
					}

					\draw[->] (.75,1.25) -- (.25,1.75);
					\draw[->] (1.25,1.25) -- (1.75,1.75);
					\fill (0,2) circle (.1);
					\fill (2,2) circle (.1);

					\fill (1,-3) node {$Q_{B}$};

					\draw[->] (5.95,-1.75) -- (5.95,-1.25);
					\draw[->] (6.05,-1.75) -- (6.05,-1.25);
					\fill (6,-2) circle (.1);
					\draw[->] (5.25,-2) -- (5.75,-2);
					\draw (5,-2) circle (.1);

					\fill (6,-3) node {$Q_{B'}$};

					\draw[->] (10.75,1.25) -- (10.25,1.75);
					\draw[->] (11.25,1.25) -- (11.75,1.75);
					\fill (10,2) circle (.1);
					\fill (12,2) circle (.1);

					\draw[->] (10.95,-1.75) -- (10.95,-1.25);
					\draw[->] (11.05,-1.75) -- (11.05,-1.25);
					\fill (11,-2) circle (.1);
					\draw[->] (10.25,-2) -- (10.75,-2);
					\draw (10,-2) circle (.1);

					\fill (11,-3) node {$Q_{B \coprod_{\Delta} B'}$};
				\end{tikzpicture}
			\end{center}
		\end{rmq}

		\begin{lem}\label{lem:injcoprod}
			Let $\Sigma_1$ and $\Sigma_2$ be two seeds which are glueable along subsets $\Delta_1$ and $\Delta_2$ as above and let $\Sigma = \Sigma_1 \coprod_{\Delta_1, \Delta_2} \Sigma_2$. Then for any $i$ such that $i \in \ens{1,2}$, the morphism $\mathcal F_{\Sigma_i} \fl \mathcal F_{\Sigma}$ induced by the inclusion induces an injective rooted cluster morphism $\mathcal A(\Sigma_i) \fl \mathcal A(\Sigma)$.
		\end{lem}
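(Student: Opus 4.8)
The plan is to treat, without loss of generality, the case $i=1$, and to show that the field homomorphism $f:\mathcal{F}_{\Sigma_1}\fl\mathcal{F}_{\Sigma}$ induced by the inclusion $\x_1\hookrightarrow\x$ — where we identify $\Delta_1$ with $\Delta$ via the gluing isomorphism $\Sigma_{1|\Delta_1}\simeq\Sigma_{2|\Delta_2}$ — restricts to an injective rooted cluster morphism $\mathcal{A}(\Sigma_1)\fl\mathcal{A}(\Sigma)$. Since $\x_1$ is sent bijectively onto the algebraically independent subset $(\x_1\setminus\Delta_1)\sqcup\Delta$ of $\x$, the map $f$ is an injective ring homomorphism between the ambient fields, and by construction $f(\x_1)\subset\x$ and $f(\ex_1)\subset\ex_1\subset\ex$, so \AM 1 and \AM 2 hold automatically. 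The substance of the proof is to verify \AM 3 and, simultaneously, that $f$ carries $\mathcal{A}(\Sigma_1)$ into $\mathcal{A}(\Sigma)$ (so that it is a morphism in $\amr$ at all).

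The key structural observation will be that, after the identification, $\Sigma_1$ is exactly the full subseed of $\Sigma$ supported on $\x_1$, i.e. $B[\x_1]=B^1$ and $\ex\cap\x_1=\ex_1$, and moreover that $B=B^1\coprod_{\Delta_1,\Delta_2}B^2$ has no arrows between $\x_1\setminus\Delta$ and $\x_2\setminus\Delta$: one has $b_{yz}=0$ whenever $y,z$ lie in these two respective sets. Consequently, for every $x\in\ex_1$ — which lies in $\x_1\setminus\Delta$ — all variables $y$ with $b_{xy}\neq0$ belong to $\x_1$. Two things follow at once. First, the exchange monomials of $x$ in $\Sigma$ involve only variables of $\x_1$, so $\mu_{x,\Sigma}(x)=f(\mu_{x,\Sigma_1}(x))$. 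Second, by the mutation rule the passage from $B$ to $\mu_x(B)$ modifies only entries $b_{yz}$ with both $y,z\in\x_1$, hence $\mu_x(\Sigma)$ again decomposes as an amalgamated sum with the same $\x_2\setminus\Delta$ part and no crossing arrows, and its full subseed on $\x_1'=(\x_1\setminus\ens{x})\sqcup\ens{x'}$ equals $\mu_x(\Sigma_1)$.

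Iterating this along an arbitrary $\Sigma_1$-admissible sequence $(x_1,\ldots,x_n)$ gives, by induction on $n$: the sequence is $\Sigma$-admissible, hence $(f,\Sigma_1,\Sigma)$-biadmissible; at each stage the exchangeable variable being mutated has all its neighbours in the current $\x_1$-part; and therefore $f(\mu_{x_n}\circ\cdots\circ\mu_{x_1,\Sigma_1}(y))=\mu_{x_n}\circ\cdots\circ\mu_{x_1,\Sigma}(f(y))$ for every $y\in\x_1$ (the two sides coincide trivially unless $y$ is one of the variables being mutated, in which case one applies the previous step repeatedly). This is precisely \AM 3, and it simultaneously shows that every cluster variable of $\mathcal{A}(\Sigma_1)$ — being either a frozen variable, sent to a frozen variable of $\Sigma$, or of the form $\mu_{x_n}\circ\cdots\circ\mu_{x_1,\Sigma_1}(x)$ for a $\Sigma_1$-admissible $(x_1,\ldots,x_n)$ and $x\in\ex_1$ — maps to a cluster variable of $\mathcal{A}(\Sigma)$. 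Hence $f(\mathcal{A}(\Sigma_1))\subset\mathcal{A}(\Sigma)$, and $f$ restricts to an injective rooted cluster morphism.

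The main obstacle is the bookkeeping in this induction: one must check carefully that mutating in an $\ex_1$-direction genuinely preserves the ``no crossing arrows'' property and leaves the $\x_2$-part untouched, so that the entire $\Sigma_1$-mutation structure embeds rigidly inside the $\Sigma$-mutation structure — in contrast with the general full-subseed situation of the preceding remark (the path quiver $1\fl2\fl\cdots\fl m$), where the obstruction is exactly that an exchangeable vertex of the subseed acquires a new neighbour. Here this cannot occur because the glued variables $\Delta_1,\Delta_2$ are frozen on both sides, so no exchangeable variable of $\Sigma_1$ ever sees $\x_2\setminus\Delta$.
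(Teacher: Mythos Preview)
Your proof is correct and follows essentially the same approach as the paper's: both argue by induction on the length of a $\Sigma_1$-admissible sequence, using that $\ex_1\subset\x_1\setminus\Delta$ and the vanishing $b_{yz}=0$ for $y\in\x_1\setminus\Delta$, $z\in\x_2\setminus\Delta$ to show that mutation at any $x\in\ex_1$ preserves the block structure (so $\mu_x(B)[\x_1']=\mu_x(B^1)$ and the crossing block stays zero), whence every $\Sigma_1$-admissible sequence is biadmissible and $j_1$ commutes with these mutations. Your closing remark contrasting this with the $1\fl 2\fl\cdots\fl m$ counterexample is a nice addition that the paper does not make explicit.
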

		\begin{proof}
			By construction of $\Sigma$, the canonical morphism $j_i: \mathcal F_{\Sigma_i} \fl \mathcal F_{\Sigma}$ is injective and satisfies \AM 1 and \AM 2. We now prove by induction on $l$ that any $\Sigma_1$-admissible sequence of length $l$ is $(j_1,\Sigma_1,\Sigma)$-biadmissible and that $j_1$ commutes with mutations along biadmissible sequences of length $l$.

			Let $x \in \ex_1$. Then $j_1(x) = x \in \ex$ so that $(x)$ is $(j_1,\Sigma_1,\Sigma)$-biadmissible and therefore mutating in $\Sigma_1$ gives
			$$\mu_x(x) = \prod_{\substack{y \in \x_1~;\\ b^1_{xy}>0}} y^{b^1_{xy}} + \prod_{\substack{y \in \x_1~;\\ b^1_{xy}<0}} y^{-b^1_{xy}}$$
			and, since $b_{zy} =0$ for any $z \in (\x_1 \setminus \Delta) \supset \ex_1$ and any $y \in \x_2 \setminus \Delta$, mutating in $\Sigma$ gives
			\begin{align*}
				\mu_{x,\Sigma}(x) 
					& = \prod_{\substack{y \in \x~;\\ b_{xy}>0}} y^{b_{xy}} + \prod_{\substack{y \in \x~;\\ b_{xy}<0}} y^{-b_{xy}}\\
					& = \prod_{\substack{y \in (\x_1 \setminus \Delta_1) \sqcup \Delta~;\\ b_{xy}>0}} y^{b_{xy}} + \prod_{\substack{y \in (\x_1 \setminus \Delta_1) \sqcup \Delta~;\\ b_{xy}<0}} y^{-b_{xy}}\\
					& = \prod_{\substack{y \in \x_1~;\\ b^1_{xy}>0}} y^{b^1_{xy}} + \prod_{\substack{y \in \x_1~;\\ b^1_{xy}<0}} y^{-b^1_{xy}}\\
					& = j_1(\mu_{x,\Sigma_1}(x))
			\end{align*}
			and thus $j_1$ is a locally rooted cluster morphism.

			Assume now that we proved the claim for any $k<l$ and let $\Sigma^{(k)} = \mu_{x_k} \circ \cdots \circ \mu_{x_1}(\Sigma)$ and $\Sigma_1^{(k)} = \mu_{x_k} \circ \cdots \circ \mu_{x_1}(\Sigma_1)$. We denote by $B^{(k)}=(b^{(k)}_{xy})$ (or $B^{1,(k})=(b^{1,(k)}_{xy})$) the exchange matrix of $\Sigma^{(k)}$ (or $\Sigma_1^{(k)}$, respectively). By the induction hypothesis, the cluster $\x^{(k)}$ of $\Sigma^{(k)}$ is $\x_1^{(k)} \sqcup (\x_2  \setminus \Delta) \sqcup \Delta$ where $\x_1^{(k)} \sqcup \Delta_1$ is the cluster of $\Sigma_1^{(k)}$. Then an easy induction proves that
			$$b^{(l)}_{xy} = \left\{\begin{array}{ll}
				0 & \text{ if } x \in \x_1^{(k)} \text{ and } y \in \x_2 \setminus \Delta \\
				b^{1,(l)}_{xy} & \text{ if } x,z \in \x_1^{(k)} \sqcup \Delta.
			\end{array}\right.$$
			Because $j_1$ commutes with sequences of biadmissible mutations of length $k$, any variable $x_{k+1}$ exchangeable in $\Sigma_1^{(k)}$ is also exchangeable in $\Sigma^{(k)}$ and a similar calculation proves that for any such exchangeable variable $x_{k+1}$, the morphism $j_1$ commutes with $\mu_{x_{k+1}} \circ \cdots \circ \mu_{x_1}$.
		\end{proof}

		\begin{rmq}
			If $\Delta_1$ and $\Delta_2$ do not consist of frozen variables, then the canonical morphism $\mathcal F_{\Sigma_i} \fl \mathcal F_{\Sigma}$ may not satisfy \AM 3. Indeed, if one considers the seeds
			$$\Sigma_1 = \left((x_1,x_2),(x_1,x_2),\left[\begin{array}{rr} 0 & 1 \\ -1 & 0 \end{array}\right]\right)$$
			$$\Sigma_2 = \left((x_2,x_3),(x_3),\left[\begin{array}{rr} 0 & 1 \\ -1 & 0 \end{array}\right]\right)$$
			so that
			$$\Sigma = \Sigma_1 \coprod_{x_2,x_2} \Sigma_2 = \left((x_1,x_3,x_2),(x_1,x_3,x_2),\left[\begin{array}{rrr} 0 & 0 & 1 \\ 0 & 0 & -1 \\ -1 & 1 & 0 \end{array}\right]\right).$$
			Then mutating along $x_2$ in $\Sigma_1$ gives $\frac{1+x_1}{x_2}$ whereas mutating along $x_2$ in $\Sigma$ gives $\frac{x_1+x_3}{x_2}$. Therefore, the canonical morphism $\mathcal F_{\Sigma_1} \fl \mathcal F_{\Sigma}$ does not satisfy \AM 3.
		\end{rmq}

		If $\Sigma_1=(\x_1,\ex_1,B^1)$ and $\Sigma_2=(\x_2,\ex_2,B^2)$ are glueable along $\Delta_1,\Delta_2$ and if we denote by $\Delta$ the common image of $\Delta_1$ and $\Delta_2$ in $\Sigma = \Sigma_1 \coprod_{\Delta_1,\Delta_2} \Sigma_2 = (\x,\ex,B)$, then the compositions of the canonical ring homomorphisms
		$$\Z[\Delta] \simeq \Z[\Delta_1] \fl \mathcal A(\Sigma_1) \text{ and }\Z[\Delta] \simeq \Z[\Delta_2] \fl \mathcal A(\Sigma_2)$$
		induce $\Z[\Delta]$-algebra structures on $\mathcal A(\Sigma_1)$ and $\mathcal A(\Sigma_2)$. Also the inclusion $\Z[\Delta] \subset \mathcal A(\Sigma)$ induces a $\Z[\Delta]$-algebra structure on $\mathcal A(\Sigma)$. Then we have the following proposition~:
		\begin{prop}
			The canonical ring isomorphism $\Q(\x) \xrightarrow{\sim} \Q(\x_1) \otimes_{\Q(\Delta)} \Q(\x_2)$ induces an isomorphism of $\Z[\Delta]$-algebras~:
			$$\mathcal A\left(\Sigma_1 \coprod_{\Delta_1,\Delta_2} \Sigma_2\right) \simeq \mathcal A(\Sigma_1) \otimes_{\Z[\Delta]} \mathcal A(\Sigma_2).$$	\hfill \qed
		\end{prop}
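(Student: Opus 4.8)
The plan is to produce the isomorphism explicitly from the two monomorphisms supplied by Lemma~\ref{lem:injcoprod} and then to check that it is bijective by reducing, for the non-trivial half, to a statement about the ambient fields.

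\textbf{Construction of the map.} By Lemma~\ref{lem:injcoprod} there are injective rooted cluster morphisms $j_i\colon\mathcal A(\Sigma_i)\fl\mathcal A(\Sigma)$ for $i\in\ens{1,2}$, and by construction each of them fixes $\Z[\Delta]$ pointwise (it sends $\delta\in\Delta_i$ to the corresponding $\delta\in\Delta\subset\x$). Hence $j_1$ and $j_2$ are homomorphisms of $\Z[\Delta]$-algebras, and since $\mathcal A(\Sigma)$ is commutative the universal property of the tensor product yields a $\Z[\Delta]$-algebra homomorphism
$$\Phi\colon\mathcal A(\Sigma_1)\otimes_{\Z[\Delta]}\mathcal A(\Sigma_2)\fl\mathcal A(\Sigma),\qquad a\otimes b\longmapsto j_1(a)\,j_2(b),$$
which is exactly the map induced by the canonical ring homomorphism $\Q(\x_1)\otimes_{\Q(\Delta)}\Q(\x_2)\to\Q(\x)$ from the statement. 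It remains to show that $\Phi$ is bijective.

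\textbf{Surjectivity.} This is essentially already contained in the proof of Lemma~\ref{lem:injcoprod}: the inductive argument there shows that along \emph{any} $\Sigma$-admissible sequence the exchange matrix retains its block shape with respect to the partition $\x=(\x_1\setminus\Delta)\sqcup(\x_2\setminus\Delta)\sqcup\Delta$, so each mutation is performed ``entirely on the $\Sigma_1$-side'' or ``entirely on the $\Sigma_2$-side''. Consequently every cluster variable of $\mathcal A(\Sigma)$ is either the $j_1$-image of a cluster variable of $\mathcal A(\Sigma_1)$ or the $j_2$-image of a cluster variable of $\mathcal A(\Sigma_2)$ (the variables of $\Delta$ lying in the images of both). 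Since $\mathcal A(\Sigma)$ is generated as a ring by $\mathcal X_\Sigma$, this gives $\mathcal A(\Sigma)=\Z[\,j_1(\mathcal X_{\Sigma_1})\cup j_2(\mathcal X_{\Sigma_2})\,]=\im(\Phi)$.

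\textbf{Injectivity.} Factor $\Phi$ through the ambient fields as
$$\mathcal A(\Sigma_1)\otimes_{\Z[\Delta]}\mathcal A(\Sigma_2)\fl\Q(\x_1)\otimes_{\Q(\Delta)}\Q(\x_2)\fl\mathcal F_\Sigma .$$
Since the families of indeterminates $\x_1\setminus\Delta$ and $\x_2\setminus\Delta$ are algebraically independent over $\Q(\Delta)$, the subfields $\Q(\x_1)$ and $\Q(\x_2)$ of $\mathcal F_\Sigma$ are linearly disjoint over $\Q(\Delta)$, so the second map is injective. For the first map, take a nonzero element written with the fewest simple tensors, $\sum_{k=1}^n a_k\otimes b_k$: minimality forces the $b_k$ to be $\Z[\Delta]$-linearly independent, hence $\Q(\Delta)$-linearly independent in $\Q(\x_2)$, hence $\Q(\x_1)$-linearly independent in $\mathcal F_\Sigma$ by linear disjointness, so $\sum_k j_1(a_k)j_2(b_k)=0$ would force every $a_k=0$, a contradiction. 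Thus $\Phi$ is injective; being moreover a homomorphism of $\Z[\Delta]$-algebras compatible with the structure maps (it identifies $r\mapsto r\otimes 1=1\otimes r$ with $\Z[\Delta]\hookrightarrow\mathcal A(\Sigma)$), it is the desired isomorphism of $\Z[\Delta]$-algebras.

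\textbf{Where the work is.} Surjectivity costs nothing beyond Lemma~\ref{lem:injcoprod}; the delicate point is injectivity, and precisely the passage from $\otimes_{\Z[\Delta]}$ to $\otimes_{\Q(\Delta)}$: the ``fewest simple tensors'' reduction is legitimate only once one knows $\mathcal A(\Sigma_1)\otimes_{\Z[\Delta]}\mathcal A(\Sigma_2)$ has no $\Z[\Delta]$-torsion. This is obtained via the Laurent phenomenon, which places each $\mathcal A(\Sigma_i)$ inside $\Z[\x_i\setminus\ex_i][\ex_i^{\pm1}]$, a free (hence flat) $\Z[\Delta]$-module; the tensor product over $\Z[\Delta]$ of these two rings is identified with $\Z[\x\setminus\ex][\ex^{\pm1}]$ because the underlying sets of indeterminates on the two sides are disjoint and their products are distinct monomials. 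The remaining ingredient, linear disjointness of rational function fields in complementary sets of indeterminates, is standard and I would only cite it.
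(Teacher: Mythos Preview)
The paper offers no proof of this proposition (it is marked with a bare \qed), so you are supplying an argument where the authors gave none. Your construction of $\Phi$ and your surjectivity argument are correct: the block structure of $B$ with respect to the partition $(\x_1\setminus\Delta)\sqcup(\x_2\setminus\Delta)\sqcup\Delta$ is preserved under mutation at any exchangeable variable (exactly the computation carried out in the proof of Lemma~\ref{lem:injcoprod}), so every cluster variable of $\Sigma$ lies in $j_1(\mathcal X_{\Sigma_1})\cup j_2(\mathcal X_{\Sigma_2})$.

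The gap is in injectivity, and it is precisely at the point you yourself flag. Your ``fewest simple tensors'' reduction needs $\mathcal A(\Sigma_1)\otimes_{\Z[\Delta]}\mathcal A(\Sigma_2)$ to be $\Z[\Delta]$-torsion-free, and you propose to obtain this from the embeddings $\mathcal A(\Sigma_i)\hookrightarrow L_i:=\Z[\x_i\setminus\ex_i][\ex_i^{\pm1}]$ together with $L_1\otimes_{\Z[\Delta]}L_2\cong L$. But that deduction is circular: to conclude torsion-freeness you would need $\mathcal A(\Sigma_1)\otimes_{\Z[\Delta]}\mathcal A(\Sigma_2)\hookrightarrow L_1\otimes_{\Z[\Delta]}L_2$, and that injection is equivalent to the injectivity of $\Phi$ itself. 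Merely knowing that each $\mathcal A(\Sigma_i)$ sits inside a free $\Z[\Delta]$-module does \emph{not} force their tensor product to be torsion-free; over $R=k[x,y]$ the ideal $I=(x,y)$ is a submodule of the free module $R$, yet $x\otimes y-y\otimes x\in I\otimes_R I$ is a nonzero $x$-torsion element.

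What would close the gap is showing that at least one of the $\mathcal A(\Sigma_i)$ is \emph{flat} over $\Z[\Delta]$ (then $\mathcal A(\Sigma_1)\otimes_{\Z[\Delta]}\mathcal A(\Sigma_2)\hookrightarrow \mathcal A(\Sigma_1)\otimes_{\Z[\Delta]}L_2\hookrightarrow L_1\otimes_{\Z[\Delta]}L_2$ follows, the second arrow by freeness of $L_1$). This is plausible---in small examples one can exhibit an explicit $\Z[\Delta]$-basis of $\mathcal A(\Sigma_i)$---but it is a separate statement that requires its own argument, not a consequence of the Laurent phenomenon alone.
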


\section{Coproducts, products and amalgamated sums}\label{section:sums}
	\subsection{Coproducts of rooted cluster algebras}
		\begin{lem}\label{lem:sommes}
			The category $\amr$ admits countable coproducts. 
		\end{lem}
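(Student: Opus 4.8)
The plan is to realise the coproduct of a countable family $\{\mathcal A(\Sigma_i)\}_{i\in I}$, $\Sigma_i=(\x_i,\ex_i,B^i)$, as the rooted cluster algebra of the ``block-diagonal'' seed, and then to verify its universal property by reducing everything, block by block, to the data already attached to the individual $\Sigma_i$.

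\emph{Construction.} Choosing representatives with pairwise disjoint underlying undeterminate sets, set $\x=\bigsqcup_{i\in I}\x_i$, $\ex=\bigsqcup_{i\in I}\ex_i$, and let $B=\bigoplus_{i\in I}B^i\in M_\x(\Z)$ be the matrix agreeing with $B^i$ on $\x_i\times\x_i$ and vanishing off the diagonal blocks. Then $\x$ is countable (a countable union of countable sets) and $B$ is locally finite and skew-symmetrisable since each $B^i$ is, so $\Sigma=(\x,\ex,B)$ is a seed and $\coprod_{i\in I}\mathcal A(\Sigma_i):=\mathcal A(\Sigma)$ is the candidate; when $I=\emptyset$ this is the empty seed, whose rooted cluster algebra $\Z$ is indeed initial in $\amr$. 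The structural observation one needs, proved by an immediate induction from the mutation formula, is that mutation preserves block-diagonality: mutating in a direction $x\in\x_i$ leaves $b_{yz}$ unchanged whenever $\{y,z\}\not\subset\x_i$. Hence every $\Sigma$-admissible sequence $(x_1,\dots,x_n)$ has each $x_k$ lying in a well-defined block $i_k$, its block-$i$ subsequence $S_i=(x_k\mid i_k=i)$ is $\Sigma_i$-admissible, the mutated seed $\mu_{x_n}\circ\cdots\circ\mu_{x_1}(\Sigma)$ is again block-diagonal, restricting to $\mu_{S_i}(\Sigma_i)$ on block $i$, and therefore $\mathcal X_\Sigma=\bigsqcup_i\mathcal X_{\Sigma_i}$ and $\mathcal A(\Sigma)$ is generated as a ring by $\bigcup_i\mathcal A(\Sigma_i)$.

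\emph{The morphisms and uniqueness.} Viewing $\Sigma$ as $\Sigma_i\coprod_{\emptyset,\emptyset}\Sigma'$, where $\Sigma'$ is built by the same construction from $\{\Sigma_j\}_{j\ne i}$, the proof of Lemma~\ref{lem:injcoprod} applies with no change and yields an injective rooted cluster morphism $j_i:\mathcal A(\Sigma_i)\to\mathcal A(\Sigma)$. For the universal property let $\Theta$ be a seed and $f_i:\mathcal A(\Sigma_i)\to\mathcal A(\Theta)$ rooted cluster morphisms. As $\x$ is a free generating set of $\mathcal F_\Sigma$ and the $\x_i$ are disjoint, $x\mapsto f_i(x)$ for $x\in\x_i$ extends uniquely to a ring homomorphism $f:\mathcal F_\Sigma\to\mathcal F_\Theta$ which agrees with $f_i$ on all of $\mathcal F_{\Sigma_i}$; since $\mathcal A(\Sigma)$ is generated by $\bigcup_i\mathcal A(\Sigma_i)$ and each $f_i(\mathcal A(\Sigma_i))\subset\mathcal A(\Theta)$, $f$ restricts to a ring homomorphism $\mathcal A(\Sigma)\to\mathcal A(\Theta)$ with $f\circ j_i=f_i$. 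Conditions \AM 1 and \AM 2 for $f$ are inherited from the $f_i$ because $f(\x)=\bigcup_i f_i(\x_i)$ and $f(\ex)=\bigcup_i f_i(\ex_i)$, and $f$ is the only rooted cluster morphism satisfying $f\circ j_i=f_i$ for all $i$, being determined on $\x$.

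\emph{The main obstacle: condition \AM 3 for $f$.} Given an $(f,\Sigma,\Theta)$-biadmissible sequence $(x_1,\dots,x_n)$ and $y\in\x_{i_0}$, the block structure already gives $\mu_{x_n}\circ\cdots\circ\mu_{x_1,\Sigma}(y)=\mu_{S_{i_0},\Sigma_{i_0}}(y)$, so $f(\mu_{x_n}\circ\cdots\circ\mu_{x_1,\Sigma}(y))=f_{i_0}(\mu_{S_{i_0},\Sigma_{i_0}}(y))$. It then suffices to show that $S_{i_0}$ is $(f_{i_0},\Sigma_{i_0},\Theta)$-biadmissible and that erasing from $(f(x_1),\dots,f(x_n))$ the terms belonging to blocks other than $i_0$ does not change the effect of the composite mutation on $f(y)$, for then \AM 3 for $f_{i_0}$ finishes the proof. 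The difficulty---and, I expect, the only genuine one---is that $\Theta$ carries no a priori block decomposition, so one cannot simply restrict; instead one proceeds by induction on $n$, using \AM 3 for each $f_i$ to observe that the exchange relation at any variable in the image of $\ex_i$ is the $f_i$-image of the corresponding relation in the $i$-th block and therefore involves no variable coming from another block. This forces the mutations originating in block $i_0$ to commute in $\Theta$ with all the others, so that the block-$i_0$ subsequence of a biadmissible sequence stays biadmissible and can be separated off; carrying out this bookkeeping carefully is the heart of the argument, everything else being formal.
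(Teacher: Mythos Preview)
Your construction and overall strategy coincide with the paper's: form the block-diagonal seed $\Sigma$ and check the universal property for the map $h$ determined on $\x$ by the given $f_i$. The paper disposes of \AM 3 for $h$ with ``it is easily seen,'' while you correctly isolate it as the only substantive step and propose to handle it by showing that mutations coming from distinct blocks commute in $\Theta$, so that the block-$i_0$ subsequence of a biadmissible sequence can be separated off.

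That argument cannot be completed, and the obstruction is not bookkeeping. Your key assertion---that the exchange relation at a variable in the image of $\ex_i$ ``involves no variable coming from another block''---presupposes that the images $f_i(\x_i)$ and $f_j(\x_j)$ are disjoint in $\x_\Theta$, which nothing guarantees. Take $\Sigma_1=(\{x\},\{x\},[0])$, $\Sigma_2=(\{z\},\{z\},[0])$, $\Theta=(\{u\},\{u\},[0])$, and the rooted cluster isomorphisms $f_1\colon x\mapsto u$, $f_2\colon z\mapsto u$. The only ring homomorphism $h\colon\mathcal A(\Sigma)\to\mathcal A(\Theta)$ with $h\circ j_i=f_i$ sends both $x$ and $z$ to $u$. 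For the $(h,\Sigma,\Theta)$-biadmissible sequence $(x)$ and $y=z\in\x$ one finds
\[
h\bigl(\mu_{x,\Sigma}(z)\bigr)=h(z)=u,\qquad \mu_{h(x),\Theta}\bigl(h(z)\bigr)=\mu_{u,\Theta}(u)=\frac{2}{u},
\]
so \AM 3 fails. Thus the block-diagonal seed does not have the universal property here: your induction breaks precisely when two blocks collide in the target, and there is no way to ``commute away'' the mutation at $h(x)$ from $h(z)$ when $h(x)=h(z)$. The paper's proof shares this gap; the phrase ``easily seen'' conceals the same unproved step.
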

		\begin{proof}
			Let $I$ be a countable set and let $\ens{\mathcal A(\Sigma_i)}_{i \in I}$ be a countable family of rooted cluster algebras. For any $i \in I$, we set $\Sigma^i=(\x_i,\ex_i,B^i)$ and 
			$$\Sigma = \coprod_{i \in I} \Sigma_i = (\x,\ex,B)$$
			where
			$$\x = \bigsqcup_{i \in I} \x_i, \quad \ex = \bigsqcup_{i \in I} \ex_i,$$
			and $B$ is the block-diagonal matrix whose blocks are indexed by $I$ and such that for any $i \in I$, the $i$-th diagonal block is $B^i$. Then $B$ is locally finite and thus $\Sigma$ is a well-defined seed. For any $i \in I$, we denote by $j_i$ the canonical inclusion $\mathcal F_{\Sigma_i} \fl \mathcal F_\Sigma$, which clearly induces a rooted cluster morphism $\mathcal A(\Sigma_i) \fl \mathcal A(\Sigma)$.

			Now assume that there exists a rooted cluster algebra $\mathcal A(\Theta)$ and for any $i \in I$ a rooted cluster morphism $g_i:\mathcal A(\Sigma_i) \fl \mathcal A(\Theta)$. In order to prove that $\mathcal A(\Sigma)$ is the coproduct of the $\mathcal A(\Sigma_i)$ with $i \in I$, we need to prove that there exists a unique rooted cluster morphism $h:\mathcal A(\Sigma) \fl \mathcal A(\Theta)$ such that $h \circ j_i = g_i$ for any $i \in I$. It is easily seen that such a rooted cluster morphism $h$ exists if and only if there exists a ring homomorphism $h: \mathcal F_\Sigma \fl \mathcal F_\Theta$ satisfying $h(x)=g_i(x)$ for any $i \in I$. This latter condition defines precisely one ring homomorphism $h:\mathcal F_\Sigma \fl \mathcal F_\Theta$ and therefore, $h$ exists and is unique. 
		\end{proof}

		\begin{corol}
			The full subcategory of $\amr$ formed by rooted cluster algebras associated with finite seeds has finite coproducts.
		\end{corol}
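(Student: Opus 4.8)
The plan is to reduce immediately to Lemma \ref{lem:sommes}. Given a finite set $I$ and a finite family $\ens{\mathcal A(\Sigma_i)}_{i \in I}$ of rooted cluster algebras associated with finite seeds $\Sigma_i = (\x_i,\ex_i,B^i)$, I would form the seed $\Sigma = \coprod_{i \in I}\Sigma_i = (\x,\ex,B)$ exactly as in the proof of Lemma \ref{lem:sommes}, that is, with $\x = \bigsqcup_{i \in I}\x_i$, $\ex = \bigsqcup_{i \in I}\ex_i$ and $B$ block-diagonal with blocks $B^i$. The first point to verify is that $\Sigma$ is again a \emph{finite} seed: since $I$ is finite and each $\x_i$ is finite, $\x$ is a finite union of finite sets, hence finite. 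Consequently $B$ is (trivially) locally finite and skew-symmetrisable, so $\mathcal A(\Sigma)$ is an object of the full subcategory under consideration.

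Next I would invoke the universal property established in Lemma \ref{lem:sommes}: the canonical inclusions $j_i : \mathcal A(\Sigma_i) \fl \mathcal A(\Sigma)$ exhibit $\mathcal A(\Sigma)$ as the coproduct $\coprod_{i \in I}\mathcal A(\Sigma_i)$ in $\amr$, the argument of that lemma being valid for any countable $I$ and in particular for a finite one. Since the subcategory in question is \emph{full}, any cocone from the $\mathcal A(\Sigma_i)$ to an object $\mathcal A(\Theta)$ with $\Theta$ finite is in particular a cocone in $\amr$, hence factors uniquely through $\mathcal A(\Sigma)$, and the factorising morphism automatically belongs to the subcategory. Thus $\mathcal A(\Sigma)$ is also the coproduct inside the full subcategory.

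There is essentially no obstacle here: the only point requiring a word is the stability of finiteness under disjoint union, which fails for countable $I$ (explaining why the statement is restricted to finite coproducts) but holds trivially for finite $I$. Everything else is inherited from Lemma \ref{lem:sommes} together with the general principle that a colimit computed in an ambient category which happens to lie in a full subcategory is again a colimit in that subcategory.
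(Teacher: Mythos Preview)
Your proposal is correct and is exactly the intended argument: the paper states the corollary without proof precisely because it is immediate from Lemma~\ref{lem:sommes} together with the observation that a finite disjoint union of finite clusters is finite, so the coproduct seed remains in the full subcategory. Your remark that fullness guarantees the universal property transfers to the subcategory is the only additional point needed, and you have it.
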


		\begin{rmq}		 
			Topologically, this can be interpreted by saying that if $(S,M)$ is a marked surface, with connected components $S_1, \ldots, S_n$ and with $M_i = M \cap S_i$ for any $1\leq i \leq n$, then $\mathcal A(S,M)$ is the coproduct of the $\mathcal A(S_i,M_i)$ in $\amr$. Indeed, for any $1 \leq i \leq n$, let $T_i$ be a triangulation of $(S_i,M_i)$. Then $T = \bigsqcup_i T_i$ is a triangulation of $(S,M)$ and it follows immediately from the definitions that $\Sigma_T = \coprod_{i=1}^n \Sigma_{T_i}$ so that $\mathcal A(\Sigma_T) = \coprod_{i=1}^n \mathcal A(\Sigma_{T_i})$.
		\end{rmq}

	\subsection{Products of rooted cluster algebras}
		\begin{prop}\label{prop:products}
			$\amr$ does not generally admit products.
		\end{prop}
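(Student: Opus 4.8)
The plan is to exhibit an explicit pair of rooted cluster algebras whose categorical product in $\amr$ cannot exist, by showing that the natural candidate for a product — which would have to surject (via the projection morphisms) onto both factors — is forced by \AM 1 and \AM 2 to be ``too small''. The cleanest way to do this is to contradict the universal property directly: assume a product $\mathcal A(\Pi)$ of two well-chosen rooted cluster algebras $\mathcal A(\Sigma_1)$ and $\mathcal A(\Sigma_2)$ exists, analyse what the projections $p_i : \mathcal A(\Pi) \fl \mathcal A(\Sigma_i)$ must look like, and then build a test object $\mathcal A(\Theta)$ together with a pair of morphisms $g_i : \mathcal A(\Theta) \fl \mathcal A(\Sigma_i)$ that admits no simultaneous lift through the $p_i$, or admits more than one.

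First I would fix the simplest nontrivial examples, e.g.\ take $\Sigma_1 = (x,x,[0])$ and $\Sigma_2 = (y,y,[0])$, so that $\mathcal A(\Sigma_1) = \Z[x, \mu_x(x)] = \Z[x,2/x]$ and similarly for $\mathcal A(\Sigma_2)$; alternatively one may take coefficient-free rank-one seeds. The key structural observation is that any rooted cluster morphism $f : \mathcal A(\Pi) \fl \mathcal A(\Sigma_i)$, by \AM 1, sends the cluster of $\Pi$ into $\x_i \sqcup \Z$, so the cardinality and ``shape'' of the initial seed of a product is tightly constrained. In particular, if $\mathcal A(\Pi)$ is a product with projections $p_1, p_2$, then for \emph{every} rooted cluster algebra $\mathcal A(\Theta)$ the map $\Hom_{\amr}(\mathcal A(\Theta),\mathcal A(\Pi)) \fl \Hom_{\amr}(\mathcal A(\Theta),\mathcal A(\Sigma_1)) \times \Hom_{\amr}(\mathcal A(\Theta),\mathcal A(\Sigma_2))$ is a bijection. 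I would feed this with well-chosen $\mathcal A(\Theta)$ — for instance $\mathcal A(\Theta) = \Z[t]$ (a polynomial ring on one frozen variable, which by Lemma \ref{lem:injfrozen} maps freely) and also $\mathcal A(\Theta) = \mathcal A(\Sigma_1)$ itself — and compare the cardinalities of the two Hom-sets, or exhibit an explicit pair with no common lift.

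The concrete contradiction I expect to use: on the one hand, $\Hom_{\amr}(\Z[t], \mathcal A(\Sigma_i))$ contains at least all maps $t \mapsto z$ for $z$ any cluster variable of $\mathcal A(\Sigma_i)$ together with all constant maps $t \mapsto n \in \Z$; so the product $\Hom_{\amr}(\Z[t],\mathcal A(\Sigma_1)) \times \Hom_{\amr}(\Z[t],\mathcal A(\Sigma_2))$ is ``large''. On the other hand, $\Hom_{\amr}(\Z[t], \mathcal A(\Pi))$ must also realise all these pairs, which forces the seed $\Pi$ to simultaneously contain (up to $\Z$) enough variables to map onto any cluster variable pair; but then applying the universal property with $\mathcal A(\Theta) = \mathcal A(\Pi)$ and the identity, or with a two-variable polynomial ring, produces either a variable of $\Pi$ that \AM 1 cannot place, or two distinct lifts of a single pair (violating uniqueness), or a pair of morphisms with no lift at all. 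I would organise the final argument as: (i) reduce to the claim that no seed $\Pi$ can have the property that $p_1,p_2$ jointly reflect all pairs of biadmissible-mutation-commuting ring maps; (ii) pin down what $p_i$ does on $\x_\Pi$; (iii) derive the numerical/combinatorial impossibility.

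The main obstacle will be step (ii)–(iii): showing rigorously that \emph{no} seed $\Pi$ whatsoever works, rather than just that one naive candidate fails. The subtlety is that a product need not be a coproduct-like ``juxtaposition'' of seeds, so one cannot simply point at $\Sigma_1 \coprod \Sigma_2$ and be done — indeed the coproduct exists (Lemma \ref{lem:sommes}) and is \emph{not} a product. The right leverage is precisely \AM 1 and \AM 2 together with the Laurent phenomenon (as used in Lemma \ref{lem:surj}): they force the projection morphisms to be, on initial clusters, ``partial bijections into $\x_i \sqcup \Z$'', and a counting argument on how many independent ways a single test variable can be routed through $\Pi$ versus through $\Sigma_1$ and $\Sigma_2$ separately yields the contradiction. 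I would present the cleanest instance (two rank-one coefficient-free seeds, so that each $\mathcal A(\Sigma_i)$ has infinitely many cluster variables but a one-element initial cluster) since there the failure of uniqueness of the lift is the most transparent.
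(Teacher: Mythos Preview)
Your strategy is the paper's: assume a product $\mathcal A(\Pi)$ exists, probe it with small test objects, and use \AM 1 to force a contradiction on where a single test variable can land. Two adjustments will turn the plan into a clean proof.

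First, the paper takes $\Sigma_1 = ((t_1),\emptyset,[0])$ and $\Sigma_2 = ((t_2),\emptyset,[0])$, i.e.\ one \emph{frozen} variable each, not exchangeable ones. With frozen targets there are no nontrivial biadmissible sequences, so \AM 3 is vacuous for every morphism into $\mathcal A(\Sigma_i)$ and the Hom-sets are determined purely by where the initial cluster goes. Your choice $\Sigma_i=(x,x,[0])$ is workable but forces you to verify \AM 3 at each step, and note that this seed has exactly two cluster variables ($x$ and $2/x$), not infinitely many as you state at the end.

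Second, replace the vague cardinality comparison by the paper's concrete endgame. Using the two-variable test $\Theta=((v_1,v_2),\emptyset,[0])$ with $f_i(v_j)=\delta_{ij}t_j$, uniqueness of the lift through $\Pi$ pins down two \emph{distinct} elements $x_1,x_2$ in the cluster of $\Pi$ satisfying $p_1(x_1)=t_1$, $p_2(x_1)=0$, $p_1(x_2)=0$, $p_2(x_2)=t_2$, and shows that $x_k$ is the \emph{only} cluster element of $\Pi$ with $p_k$-image $t_k$. Now feed in the one-variable test $\Theta=((x),\emptyset,[0])$ with $f_1(x)=t_1$ and $f_2(x)=t_2$: any lift $h$ has $h(x)\in\x_\Pi\sqcup\Z$ by \AM 1, but $p_1(h(x))=t_1$ forces $h(x)=x_1$ while $p_2(h(x))=t_2$ forces $h(x)=x_2$, contradicting $x_1\neq x_2$. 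This is precisely the ``pair with no common lift'' you anticipated; the two-variable test is the missing ingredient that secures the uniqueness needed to close the argument.
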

		\begin{proof}
			The proof consists of the construction of an example of two rooted cluster algebras whose product is not defined in $\amr$. We consider the rooted cluster algebras associated with the seeds $\Sigma_1 = ((t_1),\emptyset, [0])$ and $\Sigma_1 = ((t_2),\emptyset, [0])$, so that $\mathcal A(\Sigma_1) = \Z[t_1]$ and $\mathcal A(\Sigma_2) = \Z[t_2]$. Assume that there exists a product in $\amr$
			$$\xymatrix{
				& \mathcal A(\Sigma) \ar[ld]_{p_1} \ar[rd]^{p_2} \\
				\Z[t_1] && \Z[t_2].
			}$$
			with $\Sigma = (\x,\ex,B)$. For any $i \in \ens{1,2}$, the morphism $p_i$ is a rooted cluster morphism so that 
			$$p_i(\x) \subset \ens{t_i} \sqcup \Z \text{ and } p_i(\ex) \subset \Z.$$
			
			Consider the seed $\Sigma'=((x),\emptyset,[0])$ and for any $i \in \ens{1,2}$, let $f_i : \Z[x] \fl \Z[t_i]$ be the ring homomorphism sending $x$ to $t_i$. Then $f_i$ is a rooted cluster morphism $\mathcal A(\Sigma') =\Z[x] \fl \mathcal A(\Sigma)$ for any $i \in \ens{1,2}$. 
	
			By definition of the product, there exists a unique rooted cluster morphism $h: \mathcal A(\Sigma') \fl \mathcal A(\Sigma)$ such that the following diagram commutes:
			$$\xymatrix{
				& \mathcal A(\Sigma') \ar[d]^h \ar[ldd]_{f_1} \ar[rdd]^{f_2} \\
				& \mathcal A(\Sigma) \ar[ld]^{p_1} \ar[rd]_{p_2} \\
				\Z[t_1] && \Z[t_2].
			}$$
			In particular, for any $i \in \ens{1,2}$, there exists $x_i \in \x$ such that $p_i(x_i) = t_i$.

			Now consider the seed $\Sigma'= ((v_1,v_2), \emptyset, [0])$ so that $\mathcal A(\Sigma') = \Z[v_1,v_2]$. For any $i \in \ens{1,2}$, let $f_i:\mathcal A(\Sigma') \fl \mathcal A(\Sigma_i)$ be defined by $f_i(v_j) = \delta_{ij} t_j$ where $\delta_{ij}$ is the Kronecker symbol. Then each $f_i$ is a rooted cluster morphism. Again by definition of product, there exists a unique rooted cluster morphism $h: \mathcal A(\Sigma') \fl \mathcal A(\Sigma)$ such that the above diagram commutes. Since $h$ satisfies \AM 1, for any $i \in \ens{1,2}$, we have $h(v_i) = x_i$ for some $x_i \in \x$ such that $p_i(x_i) = p_i(x_i') = t_i$. If $x_i,x_i' \in \x$ are such that $p_i(x_i)=t_i$, then the morphism given by $h(v_i)=h(x_i')$ induces a rooted cluster morphism $\mathcal A(\Sigma') \fl \mathcal A(\Sigma)$ and by uniqueness, $h=h'$ and thus $x_i = x_i'$. Also, as $(p_1 \circ h)(v_1) = t_1$ and $(p_1 \circ h)(v_2) = 0$, we have $h(v_1) \neq h(v_2)$. Therefore, we obtained exactly two elements $h(v_1) = x_1$ and $h(v_2) = x_2$ in $\x$ such that $p_1(x_1) = t_1$, $p_2(x_1) = 0$, $p_1(x_2)=0$ and $p_2(x_2)=t_2$ and these elements are distinct.

			Now consider again the seed $\Sigma' = ((x),\emptyset,[0])$ and $f_i : \Z[x] \fl \Z[t_i]$ the morphism sending $x$ to $t_i$ for any $i \in \ens{1,2}$. Let $h$ be the unique morphism such that the above diagram commutes. Then, by commutativity of the left triangle, we get $h(x) = x_1$ and by commutativity of the right triangle, we get $h(x) = x_2$, a contradiction. Therefore, $\mathcal A(\Sigma_1)$ and $\mathcal A(\Sigma_2)$ have no product in $\amr$.
		\end{proof}

	\subsection{Amalgamated sums}
		In this subsection, we prove that the amalgamated sums of seeds yield pushouts of injective morphisms in $\amr$.

		Let $\Sigma_1=(\x_1,\ex_1,B^1)$ and $\Sigma_2=(\x_2,\ex_2,B^2)$ be two seeds and let $\Delta_1$, $\Delta_2$ be (possibly empty) subsets of $\x_1$ and $\x_2$ respectively such that $\Sigma_1$ and $\Sigma_2$ are glueable along $\Delta_1,\Delta_2$. We recall that necessarily $\Delta_1$ and $\Delta_2$ consist of frozen variables. 

		For any $k=1,2$, it follows from Lemma \ref{lem:injfrozen} that we have a natural injective rooted cluster morphism~:
		$$i_k : \Z[\Delta] \fl \mathcal A(\Sigma_k)$$
		and from Lemma \ref{lem:injcoprod} that we have a natural injective rooted cluster morphism
		$$j_k : \mathcal A(\Sigma_k) \fl \mathcal A(\Sigma_1) \coprod_{\Delta_1, \Delta_2} \mathcal A(\Sigma_2).$$

		\begin{prop}\label{prop:pushout}
			With the previous notations, the diagram
			$$\xymatrix{
				\displaystyle \Z[\Delta] \ar[rr]^{i_1} \ar[d]_{i_2} && \mathcal A(\Sigma_1) \ar[d]^{j_1}\\
				\mathcal A(\Sigma_2) \ar[rr]_{j_2} && \displaystyle \mathcal A(\Sigma_1) \coprod_{\Delta_1, \Delta_2} \mathcal A(\Sigma_2)
			}$$ 
			is the amalgamated sum of $i_1$ and $i_2$ in $\amr$.
		\end{prop}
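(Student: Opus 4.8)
Write $\Sigma = \Sigma_1 \coprod_{\Delta_1,\Delta_2} \Sigma_2 = (\x,\ex,B)$, so that $\mathcal A(\Sigma) = \mathcal A(\Sigma_1) \coprod_{\Delta_1,\Delta_2} \mathcal A(\Sigma_2)$. The plan is to verify the universal property of the pushout directly. So let $\mathcal A(\Theta)$ be a rooted cluster algebra equipped with rooted cluster morphisms $g_k : \mathcal A(\Sigma_k) \fl \mathcal A(\Theta)$, for $k=1,2$, such that $g_1 \circ i_1 = g_2 \circ i_2$; one must produce a unique rooted cluster morphism $h : \mathcal A(\Sigma) \fl \mathcal A(\Theta)$ with $h \circ j_1 = g_1$ and $h \circ j_2 = g_2$. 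Uniqueness is immediate: by construction of the amalgamated sum, $\x = j_1(\x_1) \cup j_2(\x_2)$, so such an $h$ is forced on $\x$, hence on $\mathcal F_\Sigma$ because $\x$ is a transcendence basis, hence on $\mathcal A(\Sigma)$.

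For existence I would first obtain $h$ as a ring homomorphism. The equality $g_1 \circ i_1 = g_2 \circ i_2$ says exactly that $g_1$ and $g_2$ are homomorphisms of $\Z[\Delta]$-algebras, so, using the isomorphism of $\Z[\Delta]$-algebras $\mathcal A(\Sigma) \simeq \mathcal A(\Sigma_1) \otimes_{\Z[\Delta]} \mathcal A(\Sigma_2)$ established just above, the universal property of the tensor product of commutative rings yields a unique ring homomorphism $h : \mathcal A(\Sigma) \fl \mathcal A(\Theta)$ with $h \circ j_1 = g_1$ and $h \circ j_2 = g_2$; concretely, $h$ agrees with $g_1$ on $\x_1 \setminus \Delta_1$, with $g_2$ on $\x_2 \setminus \Delta_2$, and with the common restriction of $g_1$ and $g_2$ on $\Delta$. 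Conditions \AM 1 and \AM 2 for $h$ then follow at once from those for $g_1$ and $g_2$, since $\x = j_1(\x_1) \cup j_2(\x_2)$ and $\ex = \ex_1 \sqcup \ex_2$.

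It remains to check \AM 3, which is the substantial point. Here I would lean on the combinatorial mechanism already used in the proof of Lemma \ref{lem:injcoprod}: the block shape of $B$ is preserved by mutation, so every seed in $\Mut(\Sigma)$ is again an amalgamated sum $\Sigma_1' \coprod_{\Delta_1,\Delta_2} \Sigma_2'$ with $\Sigma_k' \in \Mut(\Sigma_k)$, mutating at an exchangeable variable lying on the ``$\Sigma_1$-side'' fixes every variable of $\x_2 \setminus \Delta_2$ and acts on the $\Sigma_1$-side exactly as the corresponding mutation in $\Mut(\Sigma_1)$, and symmetrically. Consequently, given an $(h,\Sigma,\Theta)$-biadmissible sequence $(z_1, \ldots, z_n)$, each $z_i$ is of ``$\Sigma_1$-type'' or of ``$\Sigma_2$-type''; writing $(a_1, \ldots, a_p)$ and $(b_1, \ldots, b_q)$ for these two subsequences, the $a$'s form a $\Sigma_1$-admissible sequence, the $b$'s a $\Sigma_2$-admissible one, and for $y \in \x_1 \setminus \Delta_1$ one has $\mu_{z_n} \circ \cdots \circ \mu_{z_1,\Sigma}(y) = \mu_{a_p} \circ \cdots \circ \mu_{a_1,\Sigma_1}(y)$, computed inside $\mathcal A(\Sigma_1)$, with the symmetric statement for $y \in \x_2 \setminus \Delta_2$ and nothing to check for $y \in \Delta$. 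Applying $h$, which restricts to $g_1$ on $\mathcal A(\Sigma_1)$, and invoking \AM 3 for $g_1$, the left-hand side of \AM 3 for $h$ becomes $\mu_{g_1(a_p)} \circ \cdots \circ \mu_{g_1(a_1),\Theta}(g_1(y))$, which one identifies with the right-hand side $\mu_{h(z_n)} \circ \cdots \circ \mu_{h(z_1),\Theta}(h(y))$.

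The hard part is the bookkeeping compressed into the previous paragraph. One must verify that the $\Sigma_1$-type subsequence $(a_1, \ldots, a_p)$ is not merely $\Sigma_1$-admissible but in fact $(g_1,\Sigma_1,\Theta)$-biadmissible — so that \AM 3 for $g_1$ may legitimately be applied — and symmetrically on the $\Sigma_2$-side, and, most delicately, that the iterated mutation on the $\Theta$-side genuinely factors through these two subsequences in the way asserted. I would organise this by induction on $n$: remove the last mutation $z_n$, apply the induction hypothesis to $(z_1, \ldots, z_{n-1})$, and then split into the cases $z_n$ of $\Sigma_1$-type and $z_n$ of $\Sigma_2$-type, in each of which the remaining step reduces to a single application of \AM 3 for the appropriate $g_k$ together with the compatibility of $g_1$ and $g_2$ along $\Delta$. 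Once \AM 3 is established, $h$ is a rooted cluster morphism making the square commute, and together with the uniqueness noted at the outset this shows that the square is the amalgamated sum of $i_1$ and $i_2$ in $\amr$.
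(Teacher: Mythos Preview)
Your argument is substantially more careful than the paper's own proof, which simply defines $h$ on the cluster and then asserts in one line that ``because $f_1$ and $f_2$ are rooted cluster morphisms, $h$ is necessarily also a rooted cluster morphism.'' Your strategy of splitting an $(h,\Sigma,\Theta)$-biadmissible sequence into its $\Sigma_1$-part $(a_1,\ldots,a_p)$ and its $\Sigma_2$-part $(b_1,\ldots,b_q)$ is the natural one, and everything you say on the $\Sigma$-side is correct: the block shape of $B$ is preserved under mutation, so for $y\in\x_1\setminus\Delta_1$ one indeed has $\mu_{z_n}\circ\cdots\circ\mu_{z_1,\Sigma}(y)=\mu_{a_p}\circ\cdots\circ\mu_{a_1,\Sigma_1}(y)$.

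The gap is exactly where you flag it as ``most delicate'': on the $\Theta$-side, the identification
\[
\mu_{h(z_n)}\circ\cdots\circ\mu_{h(z_1),\Theta}(h(y))\;=\;\mu_{g_1(a_p)}\circ\cdots\circ\mu_{g_1(a_1),\Theta}(g_1(y))
\]
requires that each interspersed mutation at $h(b_j)=g_2(b_j)$ leave the tracked variable unchanged, i.e.\ that the current value of that variable never coincides with $g_2(b_j)$. Nothing in the hypotheses forces this; $g_1$ and $g_2$ may send distinct exchangeable variables to the same element of $\x_\Theta$. A minimal instance: take $\Delta=\emptyset$, $\Sigma_k=(\{x_k\},\{x_k\},[0])$ for $k=1,2$, $\Theta=(\{u\},\{u\},[0])$, and $g_k(x_k)=u$. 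Each $g_k$ is a rooted cluster morphism and $g_1\circ i_1=g_2\circ i_2$, but the forced map $h(x_1)=h(x_2)=u$ already fails \AM 3 for the biadmissible sequence $(x_1)$ with $y=x_2$, since $h(\mu_{x_1,\Sigma}(x_2))=h(x_2)=u$ while $\mu_{h(x_1),\Theta}(h(x_2))=\mu_{u,\Theta}(u)=2/u$. Your proposed induction on $n$ therefore cannot get started, because the obstruction appears at $n=1$ and is not a matter of bookkeeping. The same example shows that the paper's bare assertion is unjustified, and that the statement (and already Lemma~\ref{lem:sommes}) seems to require an extra hypothesis --- for instance that $g_1(\x_1)\cap g_2(\x_2)\subset g_1(\Delta_1)\cup\Z$ --- to hold as written.
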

		\begin{proof}
			One must first prove that the diagram commutes. Let $x$ be a variable in the cluster $\Delta$ of $\Sigma_1 \bigcap_{\Delta_1,\Delta_2} \Sigma_2$. Then $i_1$ identifies canonically $x$ with a variable $x_1$ in $\Delta_1$ and $i_2$ identifies canonically $x$ with a variable $x_2$ in $\Delta_2$. But $j_1$ and $j_2$ then identify canonically $x_1$ and $x_2$ with the variable $x$ viewed as an element in $\x_1 \coprod_{\Delta_1,\Delta_2} \x_2$. Thus the diagram commutes.

			Let now $\Sigma$ be a seed such that there exists a commutative diagram in $\amr$
			$$\xymatrix{
				\displaystyle \mathcal A(\Sigma_1 \bigcap_{\Delta_1,\Delta_2} \Sigma_2) \ar[rr]^{i_1} \ar[d]_{i_2} && \mathcal A(\Sigma_1) \ar[d]^{f_1}\\
				\mathcal A(\Sigma_2) \ar[rr]_{f_2} && \displaystyle \mathcal A(\Sigma).
			}$$
			We show that there exists a unique rooted cluster morphism $h : \mathcal A(\Sigma_1) \coprod_{\Delta_1, \Delta_2} \mathcal A(\Sigma_2) \fl \mathcal A(\Sigma)$ such that the following diagram commutes~:
			\begin{equation}\label{eq:diagrampushout}
				\xymatrix{
					& \displaystyle \mathcal A(\Sigma_1) \coprod_{\Delta_1, \Delta_2} \mathcal A(\Sigma_2) \ar[d]^h \\
					\mathcal A(\Sigma_1) \ar[ru]^{j_1} \ar[r]_{f_1} & \mathcal A(\Sigma) & \mathcal A(\Sigma_2). \ar[lu]_{j_2} \ar[l]^{f_2}
				}
			\end{equation}
			
			For any $k=1,2$ we identify $\Delta_k$ with $\Delta \subset \x_1 \coprod_{\Delta_1,\Delta_2} \x_2$ via the morphism $j_k$. It follows from the commutativity of the first diagram that $f_1(x)=f_2(x)$ for any $x \in \Delta$. We thus set $h : \mathcal F_{\Sigma_1 \coprod_{\Delta_1,\Delta_2} \Sigma_2} \fl \mathcal F_{\Sigma}$ via 
			$$h(x) = \left\{\begin{array}{ll}
				f_1(x) = f_2(x) & \textrm{ if } x \in \Delta~; \\
				f_1(x) & \textrm{ if } x \in \x_1~; \\
				f_2(x) & \textrm{ if } x \in \x_2.
			\end{array}\right.$$
			
			Because $f_1$ and $f_2$ are rooted cluster morphisms, $h$ is necessarily also a rooted cluster morphism and therefore the diagram \eqref{eq:diagrampushout} commutes. Conversely, if $h$ is a rooted cluster morphism such that the diagram \eqref{eq:diagrampushout} commutes, then $h$ is entirely determined by its values on $\x_1 \coprod_{\Delta_1,\Delta_2} \x_2$ and it is easily seen that $h$ must be the above morphism.
		\end{proof}

	\subsection{Topological interpretation of the amalgamated sums}\label{ssection:topsum}
		In this subsection we prove that amalgamated sums of cluster algebras of surfaces correspond to cluster algebras associated with connected sums of surfaces. 

		Let $(S_1,M_1)$ and $(S_2,M_2)$ be marked surfaces in the sense of \cite{FST:surfaces} and let $\d_1$ and $\d_2$ be boundary components respectively of $S_1$ and $S_2$ such that there exists a homeomorphism $h: \d_1 \xrightarrow{\sim} \d_2$ satisfying $h(\d_1 \cap M_1) = \d_2 \cap M_2$. 

		We denote by
		$$(S,M) = (S_1,M_1) \coprod_{\d_1,\d_2} (S_2,M_2)$$
		the \emph{connected sum} of $S_1$ and $S_2$ along $h$, that is, $S$ is the surface obtained by gluing $S_1$ and $S_2$ along the homeomorphism $h$ and $M = (M_1 \setminus (M_1 \cap \d_1)) \cup (M_2 \setminus (M_2 \cap \d_2)) \cup M_\d$ where $M_\d$ is the common image of $\d_1 \cap M_1$ and $\d_2 \cap M_2$ in the surface $S$ (see for instance \cite[p.8]{Massey:introduction}). We denote by $\d$ the common image of $\d_1$ and $\d_2$ in the surface $S$.

		\begin{rmq}
			Even if $(S_1,M_1)$ and $(S_2,M_2)$ are unpunctured surfaces, the surface $(S,M)$ may have punctures, see for instance Figure \ref{fig:collagebord}.
		\end{rmq}

		Let $T_1$ and $T_2$ be triangulations of $(S_1,M_1)$ and $(S_2,M_2)$ respectively and let $\Sigma_1$ and $\Sigma_2$ be the corresponding seeds. Then $\d_1$ is identified with a subset of the frozen variables in $\Sigma_1$ and $\d_2$ is identified with a subset of the frozen variables in $\Sigma_2$. In the connected sum $(S,M)$, the collection $T_1 \cup T_2$ defines a triangulation and we denote by $\Sigma$ the associated seed. Then it follows from Proposition \ref{prop:pushout} that
		$$\mathcal A(\Sigma^{\d}) = \mathcal A(\Sigma_1) \coprod_{\d_1, \d_2} \mathcal A(\Sigma_2)$$
		where the notation $\Sigma^{\d}$ means that we have frozen the variables corresponding to $\d$ in $\Sigma$
		
		\begin{corol}
			In the category $\amr$, the rooted cluster algebra associated with the triangulation $T$ of $S$ where the arcs in $\d$ are frozen is the amalgamated sum over the polynomial ring $\Z[\d]$ of the rooted cluster algebras $\mathcal A(\Sigma_1)$ and $\mathcal A(\Sigma_2)$ associated with the triangulations $T_1$ and $T_2$ induced by $T$ respectively on $S_1$ and $S_2$.
		\end{corol}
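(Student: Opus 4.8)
The strategy is to identify seeds and then invoke Proposition~\ref{prop:pushout}. Write $\Sigma_k = \Sigma_{T_k} = (\x_k,\ex_k,B^k)$ for $k \in \ens{1,2}$, write $\Sigma = \Sigma_{T_1 \cup T_2} = (\x,\ex,B)$, and denote by $\d_k \subset \x_k$ the (frozen) variables attached to the arcs of $T_k$ lying on the boundary component $\d_k$. By definition of the amalgamated sum and by Proposition~\ref{prop:pushout}, the rooted cluster algebra $\mathcal A\!\left(\Sigma_1 \coprod_{\d_1,\d_2} \Sigma_2\right)$, together with the canonical morphisms $j_1,j_2$, is the amalgamated sum in $\amr$ of the inclusions $i_k : \Z[\d] \fl \mathcal A(\Sigma_k)$. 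Hence it suffices to produce a rooted cluster isomorphism $\mathcal A(\Sigma^{\d}) \xrightarrow{\sim} \mathcal A\!\left(\Sigma_1 \coprod_{\d_1,\d_2} \Sigma_2\right)$ restricting to the identity on $\Z[\d]$; by Theorem~\ref{theorem:iso} it is enough for this to exhibit an isomorphism between the corresponding simplified seeds which fixes $\d$ pointwise.

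The key geometric input is that every triangle of $T_1 \cup T_2$ is a triangle of $T_1$ or of $T_2$. Indeed, the arcs of $\d$ belong to $T_1 \cup T_2$, so cutting $S$ along $T_1 \cup T_2$ cuts in particular along $\d$ and thus disconnects $S$ into $S_1$ cut along $T_1$ and $S_2$ cut along $T_2$; the triangles of $T_1 \cup T_2$ are exactly the triangles of these two pieces. (That $T_1 \cup T_2$ is genuinely a triangulation of $S$, with arcs $(\x_1 \setminus \d_1) \sqcup (\x_2 \setminus \d_2) \sqcup \d$ whose internal ones are those of $T_1$, those of $T_2$ and those of $\d$, is the assertion made just before the statement and uses the non-degeneracy of $(S_1,M_1)$ and $(S_2,M_2)$.) It follows that in the quiver $Q_{B}$ there is no arrow between an arc of $T_1 \setminus \d$ and an arc of $T_2 \setminus \d$, that an arrow incident to an internal arc of $T_k$ comes only from a triangle contained in $S_k$, and likewise for arrows joining two internal arcs of $T_k$.

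Since a rooted cluster algebra, and the rooted cluster morphisms out of it and into it, depend only on the simplification of the seed, we may and do assume $\Sigma_k = \overline{\Sigma_k}$; then the glueability of $\Sigma_1$ and $\Sigma_2$ along $\d_1,\d_2$ amounts merely to the bijection between their entirely frozen clusters $\d_1$ and $\d_2$. Choosing an orientation of $S$ and computing $B^1,B^2$ with the orientations it induces on $S_1,S_2$, the previous paragraph shows that, relative to the partition $\x = (\x_1 \setminus \d_1) \sqcup (\x_2 \setminus \d_2) \sqcup \d$, the matrix $B$ coincides with $B^1 \coprod_{\d_1,\d_2} B^2$ in every block except the $\d \times \d$ one; but the variables of $\d$ are frozen in $\Sigma^{\d}$ as well, so that block is erased in $\overline{\Sigma^{\d}}$, while it is already $0$ in $B^1 \coprod_{\d_1,\d_2} B^2$. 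Therefore $\overline{\Sigma^{\d}} = \Sigma_1 \coprod_{\d_1,\d_2} \Sigma_2$ via the identity on $\x_1 \setminus \d_1$, on $\x_2 \setminus \d_2$ and on $\d$, and Theorem~\ref{theorem:iso} yields a rooted cluster isomorphism $\mathcal A(\Sigma^{\d}) \xrightarrow{\sim} \mathcal A\!\left(\Sigma_1 \coprod_{\d_1,\d_2} \Sigma_2\right)$ which, being the identity on $\d$, respects the $\Z[\d]$-algebra structures. Together with Proposition~\ref{prop:pushout} this is exactly the corollary.

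The main obstacle is the bookkeeping around the glued boundary $\d$. On the one hand one has to check, using the non-degeneracy hypotheses, that gluing the two triangulations really produces a triangulation of $S$ with the expected arcs and the expected frozen/exchangeable labelling (and that no arc is lost or doubled in the process). On the other hand one has to pin down the orientation conventions so that the off-diagonal blocks of $B^1$ and $B^2$ relating internal arcs of $T_k$ to the arcs of $\d$ acquire the correct signs once reassembled into $B$: concretely one glues along an orientation-reversing homeomorphism $h$ and computes $B^1,B^2$ with the orientations of $S_1,S_2$ induced from a fixed orientation of $S$, which is precisely what makes the block comparison of the third paragraph hold verbatim. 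Once these conventions are fixed, the remainder is the routine matrix comparison sketched above.
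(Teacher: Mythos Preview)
Your proposal is correct and follows essentially the same approach as the paper: identify the simplified seed $\overline{\Sigma^{\d}}$ with the amalgamated sum $\Sigma_1 \coprod_{\d_1,\d_2} \Sigma_2$, then invoke Proposition~\ref{prop:pushout}. The paper treats this corollary as an immediate consequence of the preceding discussion and states it without a separate proof; you have simply spelled out the geometric verification (that each triangle of $T_1 \cup T_2$ lies in one of the $S_k$, whence the block structure of $B$) and the orientation bookkeeping that the paper leaves implicit.
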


		\begin{exmp}
			Figure \ref{fig:collagebord} shows such a gluing.

			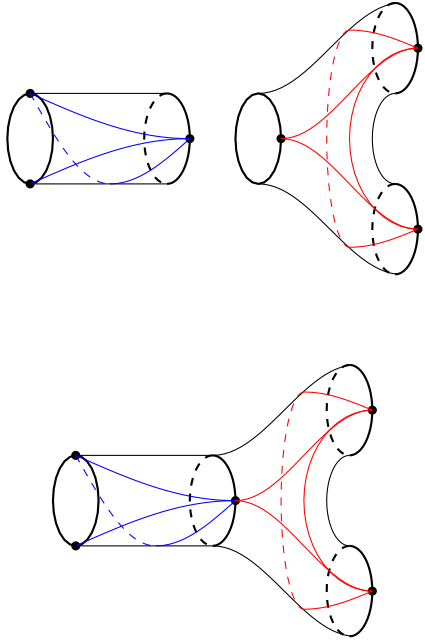
\begin{figure}[htb]
				\begin{center}
					\begin{tikzpicture}[scale = .6]


						\draw[thick] plot[domain=0:2*pi] ({.5*cos(\x r)},{sin(\x r)});

						\draw[thick] plot[domain=pi/2:-pi/2] ({3+.5*cos(\x r)},{sin(\x r)});
						\draw[thick,dashed] plot[domain=pi/2:3*pi/2] ({3+.5*cos(\x r)},{sin(\x r)});
						
						\draw (0,1) -- (3,1);
						\draw (0,-1) -- (3,-1);

						\fill (0,1) circle (.1);
						\fill (0,-1) circle (.1);
						\fill (3.5,0) circle (.1);

						\draw[blue] plot[domain=0:1] ({(3.5)*\x},{1-sin((pi/2)*\x r)});
						\draw[blue] plot[domain=0:1] ({(3.5)*\x},{-1+sin((pi/2)*\x r)});
						\draw[blue,dashed] plot[domain=0:1] ({3.5*\x/2},{1-2*sin(pi/2*\x r)});
						\draw[blue] plot[domain=1:2] ({3.5*\x/2},{-sin(pi/2*\x r)});


						\draw[thick] plot[domain=0:2*pi] ({5+.5*cos(\x r)},{sin(\x r)});

						\draw[thick] plot[domain=-pi/2:pi/2] ({8+.5*cos(\x r)},{2+sin(\x r)});
						\draw[thick,dashed] plot[domain=pi/2:3/2*pi] ({8+.5*cos(\x r)},{2+sin(\x r)});

						\draw[thick] plot[domain=-pi/2:pi/2] ({8+.5*cos(\x r)},{-2+sin(\x r)});
						\draw[thick,dashed] plot[domain=pi/2:3/2*pi] ({8+.5*cos(\x r)},{-2+sin(\x r)});
						
						\draw plot[domain=-pi/2:pi/2-.1] ({5+pi/2+\x},{2+sin(\x r)});
						\draw plot[domain=-pi/2:pi/2-.1] ({5+pi/2+\x},{-2-sin(\x r)});
						\draw plot[domain=pi/2:3*pi/2] ({8+.5*cos(\x r)},{sin(\x r)});

						\fill (8.5,2) circle (.1);
						\fill (8.5,-2) circle (.1);
						\fill (5.5,0) circle (.1);	

						\draw[red] plot[domain=pi/2:3*pi/2] ({8.5+1.5*cos(\x r)},{2*sin(\x r)});

						\draw[red] plot[domain=pi/2:pi] ({5.5+1.5*2/(pi)*\x},{2+.4*sin(\x r)});
						\draw[red] plot[domain=pi/2:pi] ({5.5+1.5*2/(pi)*\x},{-2-.4*sin(\x r)});
						\draw[red,dashed] plot[domain=pi/2:3*pi/2] ({7+.5*cos(\x r)},{2.4*sin(\x r)});

						\draw[red] plot[domain=pi/2:3*pi/2] ({4+3/pi*\x},{-1+sin(\x r)});
						\draw[red] plot[domain=pi/2:3*pi/2] ({4+3/pi*\x},{1-sin(\x r)});


						\draw[thick] plot[domain=0:2*pi] ({1+.5*cos(\x r)},{-8+sin(\x r)});

						\draw[thick] plot[domain=pi/2:-pi/2] ({4+.5*cos(\x r)},{-8+sin(\x r)});
						\draw[thick,dashed] plot[domain=pi/2:3*pi/2] ({4+.5*cos(\x r)},{-8+sin(\x r)});
						
						\draw (1,-7) -- (4,-7);
						\draw (1,-9) -- (4,-9);

						\fill (1,-7) circle (.1);
						\fill (1,-9) circle (.1);
						\fill (4.5,-8) circle (.1);

						\draw[thick] plot[domain=-pi/2:pi/2] ({7+.5*cos(\x r)},{-6+sin(\x r)});
						\draw[thick,dashed] plot[domain=pi/2:3/2*pi] ({7+.5*cos(\x r)},{-6+sin(\x r)});

						\draw[thick] plot[domain=-pi/2:pi/2] ({7+.5*cos(\x r)},{-10+sin(\x r)});
						\draw[thick,dashed] plot[domain=pi/2:3/2*pi] ({7+.5*cos(\x r)},{-10+sin(\x r)});
						
						\draw plot[domain=-pi/2:pi/2-.1] ({4+pi/2+\x},{-6+sin(\x r)});
						\draw plot[domain=-pi/2:pi/2-.1] ({4+pi/2+\x},{-10-sin(\x r)});
						\draw plot[domain=pi/2:3*pi/2] ({7+.5*cos(\x r)},{-8+sin(\x r)});

						\fill (7.5,-6) circle (.1);
						\fill (7.5,-10) circle (.1);

						\draw[blue] plot[domain=0:1] ({1+(3.5)*\x},{-8+1-sin((pi/2)*\x r)});
						\draw[blue] plot[domain=0:1] ({1+(3.5)*\x},{-8+-1+sin((pi/2)*\x r)});
						\draw[blue,dashed] plot[domain=0:1] ({1+3.5*\x/2},{-8+1-2*sin(pi/2*\x r)});
						\draw[blue] plot[domain=1:2] ({1+3.5*\x/2},{-8+-sin(pi/2*\x r)});

						\draw[red] plot[domain=pi/2:3*pi/2] ({-1+8.5+1.5*cos(\x r)},{-8+2*sin(\x r)});

						\draw[red] plot[domain=pi/2:pi] ({-1+5.5+1.5*2/(pi)*\x},{-8+2+.4*sin(\x r)});
						\draw[red] plot[domain=pi/2:pi] ({-1+5.5+1.5*2/(pi)*\x},{-8+-2-.4*sin(\x r)});
						\draw[red,dashed] plot[domain=pi/2:3*pi/2] ({-1+7+.5*cos(\x r)},{-8+2.4*sin(\x r)});

						\draw[red] plot[domain=pi/2:3*pi/2] ({-1+4+3/pi*\x},{-8+-1+sin(\x r)});
						\draw[red] plot[domain=pi/2:3*pi/2] ({-1+4+3/pi*\x},{-8+1-sin(\x r)});

					\end{tikzpicture}
				\end{center}
				\caption{An example of connected sum of an annulus and a pair of pants along a boundary and the induced triangulations.}\label{fig:collagebord}
			\end{figure} 
		\end{exmp}

\section{Surjective rooted cluster morphisms}\label{section:epi}
	We recall that an \emph{epimorphism} in a category is a morphism $f$ such that if there exist morphisms $h$ and $g$ such that $gf = hf$, then $g = h$. In this section, we focus on surjective rooted cluster morphisms, which are particular cases of epimorphisms since $\amr$ is a concrete category.

	\begin{rmq}\label{rmq:cexepi}
		As in the category $\Ann$, epimorphisms in $\amr$ are \emph{not} necessarily surjective. Indeed, if one considers the seeds
		$$\Sigma_1=((x_1),\emptyset,[0]), \, \text{ and } \Sigma_2=((x_1),(x_1),[0]),$$
		then it follows from Lemma \ref{lem:injfrozen} that the identity morphism $\mathcal F_{\Sigma_1} = \Q(x_1) \fl \Q(x_1) = \mathcal F_{\Sigma_2}$ induces a rooted cluster morphism $f:\Z[x_1] = \mathcal A(\Sigma_1) \fl \mathcal A(\Sigma_2) = \Z[x_1, \frac{2}{x_1}]$. If $\Sigma_3$ is another seed and $g,h: \mathcal A(\Sigma_2) \fl \mathcal A(\Sigma_3)$ are such that $hf = gf$, then $hf(x_1) = gf(x_1)$ so that $h(x_1) = g(x_1)$ and as $g$ and $h$ are ring homomorphisms, we also have $h(\frac{2}{x_1}) = g(\frac{2}{x_1})$ so that $h=g$. Therefore $f$ is an epimorphism.

		As $f$ is injective, it follows from Proposition \ref{prop:mono} that it is a monomorphism in $\amr$ and thus, $f$ is an example of a \emph{bimorphism} (that is, both a monomorphism and an epimorphism) in $\amr$ which is not an isomorphism.
	\end{rmq}

	\begin{prop}
		Let $\Sigma,\Sigma'$ be two seeds and $f: \mathcal A(\Sigma) \fl \mathcal A(\Sigma')$ be a surjective rooted cluster morphism. Then~:
		\begin{enumerate}
			\item any $\Sigma'$-admissible sequence lifts to an $(f,\Sigma,\Sigma')$-biadmissible sequence,
			\item $\mathcal X_\Sigma' \subset f(\mathcal X_\Sigma)$.
		\end{enumerate}
	\end{prop}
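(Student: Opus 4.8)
The plan is to reduce both parts to Lemma~\ref{lem:surj} and to the lifting argument already contained in the proof of Lemma~\ref{lem:image}. Write $\Sigma=(\x,\ex,B)$ and $\Sigma'=(\x',\ex',B')$. The first step is to observe that for a \emph{surjective} rooted cluster morphism the image seed is nothing but the target: since $f$ satisfies \AM 1 and is surjective, Lemma~\ref{lem:surj} gives $\x'\subset f(\x)$ and $\ex'\subset f(\ex)$, whence $\x'\cap f(\x)=\x'$, $\ex'\cap f(\ex)=\ex'$ and $B'[f(\x)]=B'$; that is, $f(\Sigma)=\Sigma'$. Consequently a sequence is $\Sigma'$-admissible if and only if it is $f(\Sigma)$-admissible, and the corresponding mutated variables agree (this is also what Lemma~\ref{lem:mutimage} says in this situation).

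For (1), I would take a $\Sigma'$-admissible sequence $(y_1,\dots,y_l)$; by the previous step it is $f(\Sigma)$-admissible, so the very induction carried out in the proof of Lemma~\ref{lem:image} produces a lift to an $(f,\Sigma,f(\Sigma))$-biadmissible sequence $(x_1,\dots,x_l)$, and the construction there in fact yields $f(x_i)=y_i$ for every $i$. As $f(\Sigma)=\Sigma'$, an $(f,\Sigma,f(\Sigma))$-biadmissible sequence is exactly an $(f,\Sigma,\Sigma')$-biadmissible sequence, which is the assertion.

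For (2), fix a cluster variable $z$ of $\mathcal A(\Sigma')$. If $z\in\x'$ then $z\in f(\x)$ by Lemma~\ref{lem:surj}, so $z=f(v)$ for some $v\in\x\subset\mathcal X_\Sigma$ and we are done. Otherwise $z$ lies in the cluster of a seed $\mu_{y_l}\circ\cdots\circ\mu_{y_1}(\Sigma')$ for some $\Sigma'$-admissible sequence $(y_1,\dots,y_l)$, so $z=\mu_{y_l}\circ\cdots\circ\mu_{y_1,\Sigma'}(w)$ for some $w\in\x'$. Lifting $(y_1,\dots,y_l)$ via part~(1) to an $(f,\Sigma,\Sigma')$-biadmissible sequence $(x_1,\dots,x_l)$ with $f(x_i)=y_i$, and writing $w=f(v)$ with $v\in\x$ by Lemma~\ref{lem:surj}, condition~\AM 3 gives
\[
f\bigl(\mu_{x_l}\circ\cdots\circ\mu_{x_1,\Sigma}(v)\bigr)=\mu_{f(x_l)}\circ\cdots\circ\mu_{f(x_1),\Sigma'}(f(v))=\mu_{y_l}\circ\cdots\circ\mu_{y_1,\Sigma'}(w)=z,
\]
and since $\mu_{x_l}\circ\cdots\circ\mu_{x_1,\Sigma}(v)$ is a cluster variable of $\mathcal A(\Sigma)$, this exhibits $z\in f(\mathcal X_\Sigma)$.

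The argument is essentially bookkeeping, and I expect the only genuinely delicate points to be (a) checking carefully that $f(\Sigma)=\Sigma'$, exchange matrices included, so that Lemmas~\ref{lem:mutimage} and~\ref{lem:image} may legitimately be applied with their $f(\Sigma)$ replaced by $\Sigma'$, and (b) verifying that the lift produced in the proof of Lemma~\ref{lem:image} really satisfies $f(x_i)=y_i$, since this equality is exactly what collapses the \AM 3 computation in (2) to $z$.
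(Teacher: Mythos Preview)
Your proof is correct and amounts to the same induction the paper carries out, but you package it more economically: by first observing via Lemma~\ref{lem:surj} that $f(\Sigma)=\Sigma'$, you may cite the lifting argument already established in the proof of Lemma~\ref{lem:image} rather than rerunning it from scratch, which is exactly what the paper does (it reproves that same induction directly, without naming the connection to the image seed). Your treatment of part~(2) is also slightly more careful than the paper's, since you explicitly dispose of the case $z\in\x'$ separately, whereas the paper only records $y_l\in f(\mathcal X_\Sigma)$ at the end of the inductive step and leaves the initial cluster variables implicit.
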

	\begin{proof}
		Let $y \in \ex'$. According to Lemma \ref{lem:surj}, there exists $x \in \ex$ such that $f(x) = y$ so that the $\Sigma'$-admissible sequence $(y)$ lifts to the $(f,\Sigma,\Sigma')$-biadmissible sequence $(x)$. Now let $(y_1, \ldots, y_l)$ be a $\Sigma'$-admissible sequence. We prove by induction on $l$ that $(y_1, \ldots, y_l)$ lifts to an $(f,\Sigma,\Sigma')$-biadmissible sequence. If $l=1$, this follows from the above discussion. Otherwise, there exists $y \in \x'$ such that 
		$$y_l = \mu_{y_{l-1}} \circ \cdots \circ \mu_{y_1}(y).$$
		By the induction hypothesis, $(y_1, \ldots, y_{l-1})$ lifts to an $(f,\Sigma,\Sigma')$-biadmissible sequence $(x_1, \ldots, x_{l-1})$ and since $f$ satisfies \AM 3, we get
		$$y_l = \mu_{f(x_{l-1})} \circ \cdots \circ \mu_{f(x_1)}(x) = f(\mu_{x_{l-1}} \circ \cdots \circ \mu_{x-1}(x))$$
		where $x$ lifts $y$ in $\ex$. Therefore, if $x_l = \mu_{x_{l-1}} \circ \cdots \circ \mu_{x-1}(x)$, the sequence $(x_1, \ldots, x_l)$ lifts $(y_1, \ldots, y_l)$. And moreover, $y_l \in f(\mathcal X_\Sigma)$, which proves the corollary.
	\end{proof}

	\begin{corol}
		Let $\Sigma_1$ and $\Sigma_2$ be two seeds and $f: \mathcal A(\Sigma_1) \fl \mathcal A(\Sigma_2)$ be a surjective rooted cluster morphism. Assume that $\Sigma_1$ is of finite cluster type. Then $\Sigma_2$ is of finite cluster type.
	\end{corol}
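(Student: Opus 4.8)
The plan is to deduce this immediately from part (2) of the preceding proposition. By hypothesis $\Sigma_1$ is of finite cluster type, so by definition the set $\mathcal X_{\Sigma_1}$ of cluster variables of $\mathcal A(\Sigma_1)$ is finite. Since $f : \mathcal A(\Sigma_1) \fl \mathcal A(\Sigma_2)$ is a surjective rooted cluster morphism, the preceding proposition gives $\mathcal X_{\Sigma_2} \subset f(\mathcal X_{\Sigma_1})$.

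Now $f(\mathcal X_{\Sigma_1})$ is the image of a finite set under a map, hence finite; therefore $\mathcal X_{\Sigma_2}$ is finite, i.e.\ $\Sigma_2$ is of finite cluster type.

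There is essentially no obstacle here: all the work has already been done in the preceding proposition, whose part (2) asserts exactly the inclusion $\mathcal X_{\Sigma_2} \subset f(\mathcal X_{\Sigma_1})$ (this in turn rests on Lemma~\ref{lem:surj} to lift exchangeable variables and on \AM 3 to lift admissible sequences). The only thing to check is the trivial set-theoretic fact that a subset of a finite set is finite. So the corollary is a one-line consequence.
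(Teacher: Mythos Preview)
Your proof is correct and is exactly the intended argument: the paper states this corollary without proof, as it follows immediately from part (2) of the preceding proposition via the trivial observation that a subset of the image of a finite set is finite.
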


	\subsection{Subseeds and surjective morphisms}\label{ssection:surjsubseed}
		Let $\Sigma = (\x,\ex,B)$ be a seed and let $\x' \subset \x$ be a subset. We set $\Sigma' = \Sigma_{|\x'}$ the corresponding full subseed (see Definition \ref{defi:subseed}). We consider the surjective ring homomorphism
		$$\res_{\Sigma,\Sigma'}:\left\{\begin{array}{rcl}
			\mathcal F_{\Sigma} & \fl & \mathcal F_{\Sigma'} \\
			x & \mapsto & x \text{ if }x \in \x',\\
			x & \mapsto & 1 \text{ if }x \in \x \setminus \x'.\\
		\end{array}\right.$$

		\begin{prop}\label{prop:res}
			$\res_{\Sigma,\Sigma'}$ induces an ideal, surjective, rooted cluster morphism $\mathcal A(\Sigma) \fl \mathcal A(\Sigma')$.
		\end{prop}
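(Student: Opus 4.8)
The plan is to establish, in order, that $\res:=\res_{\Sigma,\Sigma'}$ restricts to a ring homomorphism $\mathcal A(\Sigma)\to\mathcal A(\Sigma')$, that it satisfies \AM1, \AM2 and \AM3, and then that surjectivity and ideality come almost for free. Conditions \AM1 and \AM2 are immediate: $\res(\x)\subseteq\x'\sqcup\Z$, and since $\res$ is multiplicative with $\res(x)=1$ for $x\notin\x'$, also $\res(\ex)\subseteq\ex'\sqcup\Z$. For the rest I would first reduce: write $\x\setminus\x'=F\sqcup E$ with $F$ the removed \emph{frozen} variables and $E$ the removed \emph{exchangeable} ones, and factor $\res$ through the full subseed $\Sigma_{|\x\setminus F}$. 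Specialising the variables of $F$ to $1$ commutes with every mutation, since for a mutation direction $w\in\ex$ and $y,z\neq w$ the mutated entry $b'_{yz}$ only involves $b_{yz},b_{yw},b_{wz}$, none of which is indexed by $F$; hence $\mu_w(B)[\x\setminus F]=\mu_w(B[\x\setminus F])$ and this first factor is a rooted cluster morphism $\mathcal A(\Sigma)\to\mathcal A(\Sigma_{|\x\setminus F})$. By Proposition \ref{prop:composition}, and composing one removed exchangeable variable at a time, it suffices to treat the case $\x\setminus\x'=\{v\}$ with $v\in\ex$.

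So fix $v\in\ex$ and $\x'=\x\setminus\{v\}$. Along any $\Sigma$-admissible sequence there is a distinguished cluster position, the \emph{$v$-slot}, carrying successively $v=v^{(0)},v^{(1)},\dots$; a slot other than the $v$-slot changes only under a mutation at that slot, and the $v$-slot changes only under a mutation at the $v$-slot. The first claim is that a $\Sigma$-admissible sequence $(z_1,\dots,z_n)$ is $(\res,\Sigma,\Sigma')$-biadmissible \emph{if and only if} it never mutates the $v$-slot, and that \AM3 holds for such sequences. For ``only if'': if the sequence does mutate the $v$-slot, then at the first such step $i$ the variable $z_i$ is still $v$, so $\res(z_i)=1$ is not exchangeable in $\mu_{\res(z_{i-1})}\circ\cdots\circ\mu_{\res(z_1)}(\Sigma')$, and the sequence fails to be biadmissible. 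For ``if'' together with \AM3: since $\mu_w(B)[\x']=\mu_w(B[\x'])$ whenever $w\in\x'$, along a sequence avoiding the $v$-slot the $\x'$-part of the exchange matrix of $\Sigma^{(i)}:=\mu_{z_i}\circ\cdots\circ\mu_{z_1}(\Sigma)$ is exactly the exchange matrix of $\Theta^{(i)}:=\mu_{z_i}\circ\cdots\circ\mu_{z_1}(\Sigma')$; and for $w$ exchangeable in $\Sigma^{(i)}$, the $v$-slot contributes to $\mu_w(w)$ only a power of $v$, which $\res$ sends to the empty-product placeholder $1$, so $\res(\mu_{w,\Sigma^{(i)}}(w))=\mu_{w,\Theta^{(i)}}(\res(w))$. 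An induction on $n$ from these two facts gives \AM3 and, tracking slots, shows that the $\res$-images of the $\x'$-slots of $\Sigma^{(i)}$ are precisely the cluster variables of $\Theta^{(i)}$; in particular such a sequence is biadmissible and its images lie in $\mathcal A(\Sigma')$.

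The remaining point — the crux — is that $\res(\mathcal A(\Sigma))\subseteq\mathcal A(\Sigma')$ \emph{without} the biadmissibility restriction, i.e. along \emph{every} $\Sigma$-admissible sequence, including those that do mutate the $v$-slot. I would argue by induction on the length $l$ of the sequence, keeping the invariant that $\res$ maps every variable of the cluster $\x^{(l)}$ into $\mathcal A(\Sigma')$. The inductive step is a single mutation at a slot carrying $u\in\x^{(l)}$ and producing $u'$, so that $uu'=P$ with $P\in\Z[\x^{(l)}]$; hence $\res(u')=\res(P)/\res(u)$, where $\res(P)\in\mathcal A(\Sigma')$ by the inductive hypothesis, and one must show the quotient is again in $\mathcal A(\Sigma')$ even though $\res(u)$ need not be invertible there. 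The divisibility $u\mid P$ does hold in $\mathcal A(\Sigma)$, witnessed by $u'$, and $\res$ is a homomorphism of the ambient \emph{fields}, so the quotient is at least well defined; the content is that the required cancellation takes place \emph{inside the subring} $\mathcal A(\Sigma')$. This is trivial the first time the $v$-slot is mutated, where $u=v$ and $\res(v)=1$; in all other cases it requires precise control of how the removed direction $v$ has propagated into $\res(P)$ through the preceding $\x'$-mutations, and I expect this bookkeeping to be the main obstacle of the proof.

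Once $\res(\mathcal A(\Sigma))\subseteq\mathcal A(\Sigma')$ is established, $\res:\mathcal A(\Sigma)\to\mathcal A(\Sigma')$ is a rooted cluster morphism by the previous two paragraphs. Its image seed is computed directly: from $\res(\x)=\x'\sqcup\{1\}$ one gets $\x'\cap\res(\x)=\x'$ and $\ex'\cap\res(\ex)=\ex'$, while $B'[\res(\x)]=B[\x']$, so $\res(\Sigma)=\Sigma'$. Lemma \ref{lem:image} then gives $\mathcal A(\Sigma')=\mathcal A(\res(\Sigma))\subseteq\res(\mathcal A(\Sigma))$, so the inclusion is an equality: $\res$ is surjective, and $\res(\mathcal A(\Sigma))=\mathcal A(\Sigma')=\mathcal A(\res(\Sigma))$ says exactly that $\res$ is ideal.
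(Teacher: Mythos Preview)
Your attempt is more honest than the paper's own proof, and the ``crux'' you isolate is exactly where the paper's argument fails.

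The paper proceeds as follows: for $x\in\ex$ with $x\in\x'$ it computes $\res(\mu_{x,\Sigma}(x))=\mu_{x,\Sigma'}(x)$ and observes $\mu_x(B)[\x']=\mu_x(B[\x'])$, and then asserts that ``by induction, every $\Sigma$-admissible sequence is $(\res,\Sigma,\Sigma')$-biadmissible''. This is simply false as soon as $\ex\not\subset\x'$: if $v\in\ex\setminus\x'$ then $(v)$ is $\Sigma$-admissible while $\res(v)=1$ is not exchangeable in $\Sigma'$. Both the displayed computation and the matrix identity require $x\in\x'$, so the induction only runs over sequences that never leave $\ex'$; consequently the paper's proof establishes the proposition only when $\x\setminus\x'\subset\x\setminus\ex$, i.e.\ when the removed variables are all frozen --- precisely the easy reduction you carry out first.

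The point you flag as ``the main obstacle'' --- that $\res$ sends $\mathcal A(\Sigma)$ into $\mathcal A(\Sigma')$ along sequences that mutate the removed exchangeable slot --- is not bookkeeping but a genuine open question in the paper. In the one-variable case it is exactly Problem~\ref{prob:specialisation}, which the authors pose as open and only resolve in special cases (Theorem~\ref{theorem:specialisationsurfaces} for surfaces; Theorem~\ref{theorem:acyclicspe} for mutation-acyclic seeds, and there only after tensoring with $\Q$). Thus Proposition~\ref{prop:res} as stated, together with its proof, is internally inconsistent with Section~\ref{section:epi}: taken at face value it would answer Problem~\ref{prob:specialisation} affirmatively. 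Your proposal is incomplete for exactly this reason, but so is the paper's; the difference is that you correctly located the gap.
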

		\begin{proof}
			In order to simplify notations, we set $\res = \res_{\Sigma,\Sigma'}$ and $\Sigma'= (\x',\ex',B')$. Let $x \in \ex$. Then $\res(x) \in \ex'$ if and only if $x \in \x'$, that is, if and only if $x \in \ex'$. Moreover, 
			$$
				\res(\mu_{x,\Sigma}(x)) 
					 = \res \left(\frac 1x \left( \prod_{\substack{b_{xy}>0 \\ y \in \x}} y^{b_{xy}} + \prod_{\substack{b_{xy}<0 \\ y \in \x}} y^{-b_{xy}} \right)\right) 
					 = \frac 1x \left( \prod_{\substack{b_{xy}>0 \\ y \in \x'}} y^{b_{xy}} + \prod_{\substack{b_{xy}<0 \\ y \in \x'}} y^{-b_{xy}} \right) \\
					 = \mu_{x,\Sigma'}(x).$$
			Moreover, it is easily seen that $\mu_x(B)[\x']= \mu_x(B[\x'])$. By induction, we see that every sequence $(x_1, \ldots, x_n)$ which is $\Sigma$-admissible is $(\res,\Sigma,\Sigma')$-biadmissible and that $\res$ commutes with biadmissible mutations. Therefore $\res$ is a rooted cluster morphism. 

			Moreover, $\res(\mathcal A(\Sigma)) \subset \mathcal A(\Sigma')$ and $\Sigma' = \Sigma_{|\x'} = \res(\Sigma)$ so that Lemma \ref{lem:image} implies that $\res$ is ideal. Finally, by definition of $\Sigma'$, every $\Sigma'$-admissible sequence $(x_1, \ldots, x_n)$ lifts to a biadmissible sequence and therefore, $\mathcal A(\Sigma') \subset \res(\mathcal A(\Sigma))$ so that $\res$ is surjective.
		\end{proof}

	\subsection{Specialisations}
		It is well-known that specialising frozen variables to 1 allows one to realise coefficient-free cluster algebras from cluster algebras of geometric type, see for instance \cite{cluster4}. In this subsection, we study the slightly more general case where an arbitrary cluster variable, frozen or not, is specialised to an integer (which can essentially be assumed to be 1). If the considered cluster variable is frozen, then one finds natural surjective rooted cluster morphisms. More surprisingly, as we prove in certain cases (and expect in general), specialising an \emph{exchangeable} cluster variable to 1 also leads to rooted cluster morphisms.

		We start with a technical lemma~:
		\begin{lem}\label{lem:monome}
			Let $\mathcal A$ be a rooted cluster algebra and let $\x$ be a cluster in $\mathcal A$. Let $m = \prod_{x \in \x} x^{d_x}$ be a Laurent monomial in the variables in $\x$, with $d_x \in \Z$ for any $x \in \x$. Then the following conditions are equivalent~:
			\begin{enumerate}
				\item $m$ is an element in $\mathcal A$~;
				\item $d_x \geq 0$ for any $x \in \x$~;
				\item $m$ is a monomial in $\x$.
			\end{enumerate}
		\end{lem}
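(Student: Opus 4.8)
The plan is to treat the three implications separately, with essentially all the work in $(1)\Rightarrow(2)$. The equivalence $(2)\Leftrightarrow(3)$ is merely the definition of a monomial in $\x$, a monomial being by convention a product of \emph{non-negative} powers of the variables; and $(3)\Rightarrow(1)$ is immediate, since if every $d_x\geq 0$ then $m=\prod_{x\in\x}x^{d_x}$ is a finite product of cluster variables of $\mathcal{A}$, hence lies in $\mathcal{A}$. So the content of the lemma is that a Laurent monomial in a cluster which belongs to the cluster algebra cannot involve any negative exponent.

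For $(1)\Rightarrow(2)$ I would argue by contradiction. Writing $\Sigma=(\x,\ex,B)$ for the seed whose cluster is $\x$, one has $\mathcal{A}=\mathcal{A}(\Sigma)$, since a rooted cluster algebra coincides with the one attached to any mutation-equivalent seed (the two seeds have the same mutation class). Assume $m\in\mathcal{A}$ and that $d_{x_0}<0$ for some $x_0\in\x$; the goal is a contradiction, which then forces $d_x\geq 0$ for all $x$. Set $P=\prod_{d_x>0}x^{d_x}$ and $Q=\prod_{d_x<0}x^{-d_x}$, two honest finite products of elements of $\x$ (in particular $P,Q\in\mathcal{A}$), so that $mQ=P$ holds inside $\mathcal{A}$.

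The key step is to push this identity through the ideal surjective rooted cluster morphism $\res=\res_{\Sigma,\,\Sigma_{|\{x_0\}}}\colon\mathcal{A}(\Sigma)\to\mathcal{A}(\Sigma_{|\{x_0\}})$ furnished by Proposition~\ref{prop:res}, which fixes $x_0$ and sends every other variable of $\x$ to $1$. Since $x_0$ occurs in $Q$ with exponent $-d_{x_0}\geq 1$ but does not occur in $P$, applying $\res$ to $mQ=P$ yields $\res(m)\,x_0^{-d_{x_0}}=1$ in $\mathcal{A}(\Sigma_{|\{x_0\}})$, so the \emph{positive} power $x_0^{-d_{x_0}}$ is invertible in $\mathcal{A}(\Sigma_{|\{x_0\}})$. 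It then remains to compute this rooted cluster algebra. If $x_0$ is frozen in $\Sigma$, then $\mathcal{A}(\Sigma_{|\{x_0\}})=\Z[x_0]$, whose only units are $\pm 1$, a contradiction. If $x_0$ is exchangeable, then mutating it produces $x_0'=2/x_0$, so $\mathcal{A}(\Sigma_{|\{x_0\}})=\Z[x_0,2/x_0]$; viewed as a $\Z$-submodule of $\Z[x_0^{\pm 1}]$ this equals $\bigoplus_{n\geq 0}\Z\,x_0^{n}\ \oplus\ \bigoplus_{n\geq 1}2^{n}\Z\,x_0^{-n}$, so $x_0^{-k}\notin\Z[x_0,2/x_0]$ for every $k\geq 1$ and no positive power of $x_0$ is a unit, again a contradiction. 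This completes $(1)\Rightarrow(2)$.

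I expect the one genuinely delicate point to be the exchangeable case of the last computation: there $\mathcal{A}(\Sigma_{|\{x_0\}})$ is not a polynomial ring, and one has to exploit its integral structure — the description of $\Z[x_0,2/x_0]$ as the displayed direct sum of $\Z$-lattices inside $\Z[x_0^{\pm 1}]$ — rather than merely its fraction field, in order to see that $x_0$ fails to be invertible. Everything else is routine bookkeeping with Proposition~\ref{prop:res} and the elementary identities $\res(P)=1$, $\res(Q)=x_0^{-d_{x_0}}$.
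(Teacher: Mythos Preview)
Your argument is correct, but it is not the route the paper takes. The paper's proof of $(1)\Rightarrow(2)$ is a one-step application of the Laurent phenomenon: if $d_x<0$, then (by \cite[Proposition 11.2]{cluster2}) $x$ must be exchangeable; mutating at $x$ and rewriting $m$ in the cluster $\x'=(\x\setminus\{x\})\sqcup\{x'\}$ gives
\[
m=\prod_{y\neq x}y^{d_y}\left(\frac{x'}{\prod_{b_{xz}>0}z^{b_{xz}}+\prod_{b_{xz}<0}z^{-b_{xz}}}\right)^{-d_x},
\]
which is visibly not a Laurent polynomial in $\x'$, contradicting the Laurent phenomenon.

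Your approach instead invokes the restriction morphism of Proposition~\ref{prop:res} to collapse everything onto a rank-one seed, and then analyses the unit group of $\Z[x_0]$ or $\Z[x_0,2/x_0]$. This is a genuinely different reduction: the paper stays inside $\mathcal A$ and changes the cluster, while you change the algebra and stay with a single variable. Your method is pleasantly in the categorical spirit of the paper and avoids writing down the mutation formula explicitly; on the other hand it leans on Proposition~\ref{prop:res} and on the integral description of $\Z[x_0,2/x_0]$, whereas the paper's argument uses nothing beyond the Laurent phenomenon itself and is correspondingly shorter. Both handle the frozen case cleanly --- you via units in $\Z[x_0]$, the paper via polynomiality of frozen variables in the Laurent expansion.
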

		\begin{proof}
			Let $\Sigma$ be a seed containing the cluster $\x$. It is clear that the second and third assertions are equivalent and that the second implies the first one. Therefore, we only have to prove that the first one implies the second one. Assume that there exists some $x \in \x$ such that $d_x < 0$. Because the elements in $\mathcal A$ are Laurent polynomials in the exchangeable variables of $\x$ with polynomial coefficients in the frozen variables of $\x$ (see for instance \cite[Proposition 11.2]{cluster2}), if $d_x<0$, then the variable $x$ is necessarily exchangeable. Let thus $\Sigma'=(\x',\ex',B')=\mu_x \Sigma$ with $\x'= (\x \setminus \ens{x}) \sqcup \ens{x'}$. Then the expansion of $m$ in $\Sigma'$ is
			$$ m = \prod_{y \in \x \setminus x} y^{d_y} \left( \frac{x'}{\prod_{b_{xz}>0} z^{b_{xz}} + \prod_{b_{xz}<0} z^{-b_{xz}}} \right)^{-d_x}.$$
			In particular, $m$ is not a Laurent polynomial in the cluster $\x'$ and thus, according to the Laurent phenomenon (see \cite{cluster1}), $m$ does not belong to $\mathcal A$. 
		\end{proof}

		Let $\Sigma = (\x,\ex,B)$ and let $x \in \x$. We denote by $\Sigma \setminus \ens{x}$ the seed $\Sigma_{|\x \setminus \ens x} = (\x', \ex', B')$ where $\x' = \x \setminus \ens x$, $\ex' = \ex \setminus \ens x$ and $b'_{yz} = b_{yz}$ for any $y,z \neq x$ in $\x'$. 

		\begin{defi}[Simple specialisation]
			Let $n \in \Z$. The \emph{simple specialisation of $x$ to $n$} is the ring homomorphism~:
			$$\sigma_{x,n} : \left\{\begin{array}{rcl}
				\mathcal F_{\Sigma} & \fl & \mathcal F_{\Sigma \setminus \ens{x}}\\
				x & \mapsto & n, \\
				z & \mapsto & z \text{ if } z \in \x \setminus \ens x.
			\end{array}\right.$$
		\end{defi}

		The following lemma shows that, except degenerate cases, the only value to which we can specialise a (single) cluster variable is 1.
		\begin{lem}\label{lem:valuesspe}
			Let $\Sigma = (\x,\ex,B)$ be a seed, let $x \in \x$ and let $n \in \Z$. Assume that $\sigma_{x,n}$ induces a ring homomorphism $\mathcal A(\Sigma) \fl \mathcal A(\Sigma \setminus \ens{x})$. If there exists some $y \in \ex$ such that $b_{xy} \neq 0$, then $n \in \ens{-1,1}$. If there exists some $y \in \ex$ such that $b_{xy} \in 2 \Z +1$, then $n=1$.
		\end{lem}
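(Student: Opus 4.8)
\emph{Plan.} The idea is to probe the hypothetical homomorphism $\sigma_{x,n}$ on a single, well-chosen cluster variable of $\mathcal A(\Sigma)$ --- namely $\mu_{y,\Sigma}(y)$ --- and to compare the outcome with the genuine cluster variable $\mu_{y,\Sigma\setminus\ens x}(y)$ of $\mathcal A(\Sigma\setminus\ens x)$. First I would fix $y\in\ex$ with $b_{xy}\neq0$. Since $B$ is skew-symmetrisable, $b_{yx}\neq0$ as well, with sign opposite to $b_{xy}$; set $m=|b_{yx}|\geq1$ and, after replacing $\Sigma$ by $\Sigma^{\op}$ if necessary (which changes neither $\mathcal A(\Sigma)$, nor $\mathcal A(\Sigma\setminus\ens x)$, nor the hypotheses), assume $b_{yx}>0$. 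Then in the exchange relation defining $\mu_{y,\Sigma}(y)$ the variable $x$ occurs only in the first monomial, with exponent $m$; write that monomial as $x^{m}P$ and the second one as $N$, where $P,N$ are monomials in $\x\setminus\ens x$. Applying $\sigma_{x,n}$ gives
$$\sigma_{x,n}\bigl(\mu_{y,\Sigma}(y)\bigr)=\frac1y\bigl(n^{m}P+N\bigr)\in\mathcal A(\Sigma\setminus\ens x).$$
On the other hand $y$ is still exchangeable in $\Sigma\setminus\ens x$, whose exchange matrix is obtained from $B$ by deleting the $x$-row and $x$-column, so $\mu_{y,\Sigma\setminus\ens x}(y)=\frac1y(P+N)$ is a cluster variable of $\mathcal A(\Sigma\setminus\ens x)$. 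Subtracting these two elements of $\mathcal A(\Sigma\setminus\ens x)$ yields $(n^{m}-1)\,P/y\in\mathcal A(\Sigma\setminus\ens x)$.

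The core of the proof is to deduce $n^{m}=1$ from this. By Lemma \ref{lem:monome} the Laurent monomial $P/y$ is itself not in $\mathcal A(\Sigma\setminus\ens x)$, so one has to exploit the coefficient $n^{m}-1$. Here I would mutate $\Sigma\setminus\ens x$ at $y$: in the new cluster $\x''$ (with $y''=\mu_{y,\Sigma\setminus\ens x}(y)$) one has $y=(P+N)/y''$, hence $(n^{m}-1)\,P/y=(n^{m}-1)\,Py''/(P+N)$, which by the Laurent phenomenon must be an integer Laurent polynomial in the cluster $\x''$. Now $P$ and $N$ are monomials in disjoint sets of variables, so $P+N$ is divisible by no variable and is coprime both to $P$ and to the new variable $y''$; working in the Laurent polynomial ring over $\Z$ in $\x''$, this forces $P+N$ to divide the constant $n^{m}-1$ in $\Z$. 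Since $P+N$ is a non-constant polynomial --- this is where one must, for the moment, set aside the degenerate possibility $P=N=1$ --- we get $n^{m}-1=0$, i.e. $n^{m}=1$. As $m\geq1$ this gives $n\in\ens{-1,1}$, and when $m$ is odd it gives $n=1$; since $m=|b_{yx}|$ equals $|b_{xy}|$ when $B$ is skew-symmetric, the oddness of $b_{xy}$ then yields the second assertion. The two sign cases for $b_{yx}$ are interchanged by $\Sigma\mapsto\Sigma^{\op}$, and the parity bookkeeping is routine.

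The main obstacle is precisely the degenerate case $P=N=1$, i.e. the case where $x$ is the only neighbour of $y$ in $\Sigma$: then $P+N=2$ and the divisibility argument only delivers $2\mid n^{m}-1$. To close this one needs more than the bare existence of the ring homomorphism. Using that $\sigma_{x,n}$ commutes with the biadmissible mutation in direction $y$ --- that is, axiom \AM 3 for the length-one sequence $(y)$, which is available as soon as $\sigma_{x,n}$ is a rooted cluster morphism --- one obtains the exact identity $\sigma_{x,n}(\mu_{y,\Sigma}(y))=\mu_{y,\Sigma\setminus\ens x}(y)$ in $\mathcal F_{\Sigma\setminus\ens x}$, i.e. $n^{m}P+N=P+N$, whence $n^{m}=1$ with no restriction on $P,N$. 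In fact this observation short-cuts the whole argument: \AM 3 applied to $(y)$ forces $n^{m}=1$ with $m=|b_{yx}|\geq1$ directly, and both assertions follow; the Newton-polytope/divisibility argument of the previous paragraph is the fallback that still works, away from the degenerate case, under the weaker hypothesis that $\sigma_{x,n}$ is only a ring homomorphism.
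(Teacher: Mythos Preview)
Your approach --- probe $\sigma_{x,n}$ on $\mu_{y,\Sigma}(y)$, compare with $\mu_{y,\Sigma\setminus\ens x}(y)$, and study the difference $(n^{m}-1)\,P/y$ --- is exactly the paper's. The paper simply declares this difference to be ``a Laurent monomial in the cluster $\x'$ such that the exponent of $y$ is $-1$'' and invokes Lemma~\ref{lem:monome} to conclude $1-n^{b_{xy}}=0$; it does not pause over the integer coefficient, nor over the degenerate case you isolate.

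You are right to pause. Lemma~\ref{lem:monome} is stated for coefficient-one Laurent monomials, and its proof (substitute $y=(P+N)/y''$) does not extend to nonzero integer multiples once the exchange binomial $P+N$ in $\Sigma\setminus\ens x$ collapses to $1+1=2$; your divisibility argument then yields only $2\mid n^{m}-1$. In fact the lemma as stated is \emph{false} in that degenerate case: take $\Sigma=\bigl((x,y),\,(y),\,\left[\begin{smallmatrix}0&1\\-1&0\end{smallmatrix}\right]\bigr)$, so that $\mathcal A(\Sigma)=\Z\!\left[x,y,\tfrac{1+x}{y}\right]$ and $\mathcal A(\Sigma\setminus\ens x)=\Z\!\left[y,\tfrac{2}{y}\right]$. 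Then $\sigma_{x,3}$ sends $\tfrac{1+x}{y}$ to $\tfrac{4}{y}=2\cdot\tfrac{2}{y}\in\mathcal A(\Sigma\setminus\ens x)$, so $\sigma_{x,3}$ \emph{is} a ring homomorphism $\mathcal A(\Sigma)\fl\mathcal A(\Sigma\setminus\ens x)$ although $b_{xy}=1$ is odd and $n=3$. Your caveat is therefore not a technicality but a genuine counterexample, and the paper's bare appeal to Lemma~\ref{lem:monome} is a gap that your analysis exposes. Your repair via \AM 3 is correct, but as you note it strengthens the hypothesis from ``ring homomorphism'' to ``rooted cluster morphism''. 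Your further remark that the exponent of $n$ produced by the computation is $|b_{yx}|$ rather than $|b_{xy}|$ is also correct; the paper's $n^{b_{xy}}$ matches only in the skew-symmetric case.
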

		\begin{proof}
			Let $x \in \x$, $\x' = \x \setminus \ens x$ and $\Sigma' = \Sigma \setminus \ens x$. Assume that there exists $y \in \ex$ such that $b_{xy} \neq 0$. Without loss of generality, we assume that $b_{xy} >0$. Then
			\begin{align*}
				\sigma(\mu_{y,\Sigma}(y))
					& = \sigma \left( \frac 1 y \left( \prod_{b_{zy>0}}z^{b_{zy}} + \prod_{b_{yz}>0}z^{b_{yz}} \right)\right) \\
					& = \frac 1 y \left( n^{b_{xy}} \prod_{\substack{b_{zy>0}\\z \neq x}}z^{b_{zy}} + \prod_{\substack{b_{yz>0}\\z \neq x}}z^{b_{yz}} \right) \\
					& = \frac 1 y \left( n^{b_{xy}} \prod_{\substack{b'_{zy>0}}}z^{b'_{zy}} + \prod_{\substack{b'_{yz>0}}}z^{b'_{yz}} \right).
			\end{align*}
			But
			$$\mu_{y,\Sigma'}(y) = \frac 1 y \left( \prod_{\substack{b'_{zy>0}}}z^{b_{zy}} + \prod_{\substack{b'_{yz>0}}}z^{b_{yz}} \right).$$
			Thus, if $\sigma$ induces a ring homomorphism between the rooted cluster algebras, we get $\sigma(\mu_{y,\Sigma}(y)) \in \mathcal A(\Sigma \setminus \ens{x})$ and so the difference $\sigma(\mu_{y,\Sigma}(y)) - n^{b_{xy}}\mu_{y,\Sigma'}(y)$ also belongs to $\mathcal A(\Sigma \setminus \ens{x})$.

			But
			$$\sigma(\mu_{y,\Sigma}(y)) - n^{b_{xy}}\mu_{y,\Sigma'}(y) = \frac{1}{y} (1-n^{b_{xy}}) \prod_{b'_{yz}>0} z^{b'_{yz}}$$
			is a Laurent monomial in the cluster $\x'$ such that the exponent of $y$ is $-1 <0$, with $y \in \ex$. Therefore, it follows from \ref{lem:monome} that necessarily $1-n^{b_{xy}} = 0$, that is, $n^{b_{xy}} =1$ and thus $n \in \ens{-1,1}$. If moreover $b_{xy}$ is odd, we necessarily have $n=1$.
		\end{proof}

		\begin{exmp}
			We exhibit an example where a simple specialisation to $-1$ does not induce a map at the level of the corresponding cluster algebras. Consider the cluster algebras of respective types $A_3$ and $A_2$ associated with the coefficient-free seeds
			$$\Sigma = \left((x_1,x_2,x_3), \xymatrix{1 \ar[r] & 2 & \ar[l] 3}\right)
			\text{ and } 
			\Sigma' = \left((x_1,x_2),\xymatrix{1 \ar[r] & 2 }\right)$$
			and consider the simple specialisation $\sigma=\sigma_{x_3,-1}$ of $x_3$ to $-1$.

			The image under $\sigma$ of the cluster variable $\frac{1+x_1x_3}{x_2}$ is $\frac{1-x_1}{x_2}$. But if $\frac{1-x_1}{x_2}$ is in $\mathcal A(\Sigma')$, then, since $\frac{1+x_1}{x_2}$ is in $\mathcal A(\Sigma')$, we get $\frac{2}{x_2} \in \mathcal A(\Sigma')$. Then the expansion of $\frac{2}{x_2}$ in the cluster $(x_1,x_2')$ of the seed $\mu_2(\Sigma')$ is $2\frac{x_2'}{1+x_1}$ which is not a Laurent polynomial, a contradiction.
		\end{exmp}
		
		The following lemma shows that the study of simple specialisations in -1 can be reduced to the study of simple specialisations to 1.
		\begin{lem}
			Let $\Sigma = (\x,\ex,B)$ be a seed and let $x \in \ex$ be such that $b_{xy} \in 2\Z$ for any $y \in \ex$. Then $\sigma_{x,-1}$ is a rooted cluster morphism if and only if $\sigma_{x,1}$ is. Moreover, in this case $\sigma_{x,-1}(a)  = \pm \sigma_{x,1}(a)$ for any $a \in \mathcal X_\Sigma$.
		\end{lem}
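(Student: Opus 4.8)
The main device is the sign-flip involution. Let $\psi\colon \mathcal F_\Sigma \fl \mathcal F_\Sigma$ be the ring automorphism with $\psi(x) = -x$ and $\psi(z) = z$ for every $z \in \x \setminus \ens{x}$; it is involutive. Directly from the definition of a simple specialisation one gets $\sigma_{x,-1} = \sigma_{x,1} \circ \psi$ as ring homomorphisms $\mathcal F_\Sigma \fl \mathcal F_{\Sigma \setminus \ens x}$, and since $\psi^2 = \id$ also $\sigma_{x,1} = \sigma_{x,-1} \circ \psi$. Hence the two implications in the statement are symmetric, and it suffices to assume that $\sigma_{x,1}$ is a rooted cluster morphism and to deduce that $\sigma_{x,-1}$ is one. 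The whole argument then reduces to understanding how $\psi$ acts on $\mathcal A(\Sigma)$.

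The central claim I would establish is that $\psi$ restricts to a ring automorphism of $\mathcal A(\Sigma)$ and that for every $a \in \mathcal X_\Sigma$ one has $\psi(a) = \pm a$; more precisely, in each seed $\Sigma' \in \Mut(\Sigma)$ exactly one cluster variable is negated by $\psi$, namely the one obtained from $x$ by the mutations leading from $\Sigma$ to $\Sigma'$, and all other cluster variables of $\Sigma'$ are fixed by $\psi$. The plan is to prove this by induction along admissible sequences, carrying along each seed $\Sigma' \in \Mut(\Sigma)$, with cluster $\y$ and exchange matrix $C$, the invariant: there is a distinguished $\tilde x \in \y$ with $\psi(\tilde x) = -\tilde x$, every other $z \in \y$ satisfies $\psi(z) = z$, and $C_{w\tilde x} \in 2\Z$ for every exchangeable variable $w$ of $\Sigma'$ (equivalently $C_{\tilde x w} \in 2\Z$; for skew-symmetric $C$ this is trivial, in general it follows from skew-symmetrisability). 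The hypothesis $b_{xy} \in 2\Z$ for $y \in \ex$ is precisely this invariant for $\Sigma$ itself, with $\tilde x = x$. For the inductive step one mutates $\Sigma'$ at some exchangeable $w$ and applies $\psi$ to the exchange relation $w\,\mu_w(w) = P_+ + P_-$: as $\tilde x$ occurs in each of $P_+, P_-$ only to the even exponent $|C_{w\tilde x}|$, if at all, both monomials are fixed by $\psi$ when $w \neq \tilde x$, whence $\psi(\mu_w(w)) = \mu_w(w)$; and when $w = \tilde x$, $P_\pm$ involve no negated variable, so $\psi(P_\pm) = P_\pm$ and $\psi(\mu_{\tilde x}(\tilde x)) = -\mu_{\tilde x}(\tilde x)$. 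In both cases the new seed again has a single negated cluster variable. That the parity part of the invariant survives is the mutation-rule computation $C'_{u\tilde x} = C_{u\tilde x} + \tfrac12(|C_{uw}|C_{w\tilde x} + C_{uw}|C_{w\tilde x}|)$, whose correction term is even because $C_{w\tilde x}$ is even (and when $w = \tilde x$ the new row is simply $-C_{u\tilde x}$). This closes the induction, and since $\mathcal A(\Sigma)$ is generated over $\Z$ by $\mathcal X_\Sigma$ and $\psi$ is an involution fixing this generating set up to sign, $\psi(\mathcal A(\Sigma)) = \mathcal A(\Sigma)$. I expect this bookkeeping — keeping the negated locus a singleton throughout $\Mut(\Sigma)$ and propagating the parity of the relevant matrix entries — to be the main obstacle.

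With this in hand the conclusion is short. As $\psi$ is a ring automorphism of $\mathcal A(\Sigma)$ and $\sigma_{x,1}(\mathcal A(\Sigma)) \subset \mathcal A(\Sigma \setminus \ens x)$, the composite $\sigma_{x,-1} = \sigma_{x,1} \circ \psi$ is a ring homomorphism $\mathcal A(\Sigma) \fl \mathcal A(\Sigma \setminus \ens x)$; conditions \AM 1 and \AM 2 hold for $\sigma_{x,-1}$ by direct inspection, exactly as for $\sigma_{x,1}$. For \AM 3, note that in a $(\sigma_{x,n},\Sigma,\Sigma\setminus\ens x)$-biadmissible sequence $(x_1,\dots,x_l)$ one necessarily has $x_i \neq x$ for all $i$ (for $n = \pm 1$; otherwise $\sigma_{x,n}(x_i)$ is an integer, hence not exchangeable, contradicting admissibility of the image sequence). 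Since these mutations never act at $x$, the distinguished negated variable of each intermediate seed remains $x$, so $\psi(x_i) = x_i$ and $\sigma_{x,1}(x_i) = \sigma_{x,-1}(x_i)$ for all $i$; consequently the $(\sigma_{x,1},\Sigma,\Sigma\setminus\ens x)$- and $(\sigma_{x,-1},\Sigma,\Sigma\setminus\ens x)$-biadmissible sequences coincide. For such a sequence and any $y \in \x$, put $a = \mu_{x_l}\circ\cdots\circ\mu_{x_1,\Sigma}(y)$: if $y = x$ then $a = x$ and $\psi(a) = -a$, while if $y \neq x$ then $a \neq x$ (both $a$ and $x$ lie in the final cluster, which has distinct entries), so $\psi(a) = a$. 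In either case $\sigma_{x,-1}(a) = \sigma_{x,1}(\psi(a))$ together with \AM 3 for $\sigma_{x,1}$ yields exactly the identity \AM 3 requires of $\sigma_{x,-1}$ (both sides reducing to $-1$ when $y = x$, since mutations fix constants). Hence $\sigma_{x,-1}$ is a rooted cluster morphism. Finally the last assertion is immediate from the central claim: for $a \in \mathcal X_\Sigma$, $\sigma_{x,-1}(a) = \sigma_{x,1}(\psi(a)) = \sigma_{x,1}(\pm a) = \pm\,\sigma_{x,1}(a)$.
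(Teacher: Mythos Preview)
Your approach is essentially the paper's, repackaged through the sign automorphism $\psi$: the paper runs the same induction directly on $\sigma_{x,\pm 1}$, observing that biadmissible sequences for the two coincide (being exactly those avoiding $x$), that the parity condition on the $x$-row of the exchange matrix propagates under mutation, and that consequently $\sigma_{x,-1}(a)=\pm\sigma_{x,1}(a)$ on cluster variables. Factoring as $\sigma_{x,-1}=\sigma_{x,1}\circ\psi$ and tracking the single negated variable through $\Mut(\Sigma)$ is a clean way to say the same thing, and incidentally yields the slightly sharper statement that $\psi$ is a ring automorphism of $\mathcal A(\Sigma)$.

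One caveat: your parenthetical assertion that $C_{w\tilde x}\in 2\Z$ and $C_{\tilde x w}\in 2\Z$ are equivalent ``in general \dots\ from skew-symmetrisability'' is false. With symmetriser $(d_{\tilde x},d_w)=(1,2)$ one can have $C_{\tilde x w}=2$ and $C_{w\tilde x}=-1$. The exchange relation at $w$ involves $\tilde x$ to the exponent $|C_{w\tilde x}|$, so it is the \emph{column} of $\tilde x$ that must be even for your $\psi$-argument to go through, while the lemma's hypothesis controls the \emph{row}. The paper's proof has the same implicit gap (it invokes ``$b_{xy}\in 2\Z$ for $y\in\ex$'' to conclude $\sigma_{x,-1}(\mu_{a}(a))=\sigma_{x,1}(\mu_{a}(a))$, which needs $b_{ax}$ even). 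Both arguments are thus complete as written only in the skew-symmetric case; in the genuinely skew-symmetrisable case one would need to strengthen the hypothesis or supply an additional argument.
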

		\begin{proof}
			Both $\sigma_{x,1}$ and $\sigma_{x,1}$ satisfy \AM 1 and \AM 2. Now we observe that a sequence of variables is $(\sigma_{x,-1},\Sigma,\Sigma \setminus \ens{x})$-biadmissible if and only if it is disjoint from $x$, which is also the condition for this sequence to be $(\sigma_{x,1},\Sigma,\Sigma \setminus \ens{x})$-biadmissible. Let $a \neq x$ be an exchangeable variable in $\Sigma$.
			Then, since $b_{xy} \in 2\Z$ for any $y \in \ex$, it follows that that $\sigma_{x,-1}(\mu_{a,\Sigma}(a))=\sigma_{x,1}(\mu_{a,\Sigma}(a))$. Moreover, if $\mu_a \Sigma = \Sigma' = (\x',\ex',B')$, then it follows from the mutation rule for exchange matrices that $b'_{xy} \in 2 \Z$ for any $y \in \ex'$. Therefore, by induction, $\sigma_{x,-1}$ satisfies \AM 3 if and only if $\sigma_{x,1}$ does. Hence, it only remains to prove that $\sigma_{x,-1}$ induces a map $\mathcal A(\Sigma) \fl \mathcal A(\Sigma \setminus x)$ if and only if $\sigma_{x,1}$ does. This follows from the fact that $\sigma_{x,-1}(a) \in \ens{-\sigma_{x,1}(a),\sigma_{x,1}(a)}$ for any $a \in \mathcal X_\Sigma$, which is easily proved by induction.
		\end{proof}

		\begin{prop}\label{prop:specialisation}
			Let $\Sigma=(\x,\ex,B)$ be a seed and let $x \in \x$. Then $\sigma_{x,1}$ induces an ideal surjective rooted cluster morphism $\mathcal A(\Sigma) \fl \mathcal A(\Sigma \setminus \ens{x})$ if and only if it induces a ring homomorphism $\mathcal A(\Sigma) \fl \mathcal A(\Sigma \setminus \ens{x})$.
		\end{prop}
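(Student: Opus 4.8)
The plan is the following. The ``only if'' direction requires nothing: a rooted cluster morphism is by definition a ring homomorphism, so if $\sigma_{x,1}$ induces an ideal surjective rooted cluster morphism $\mathcal A(\Sigma)\fl\mathcal A(\Sigma\setminus\ens{x})$ it induces in particular a ring homomorphism. For the converse, the observation that drives everything is that $\Sigma\setminus\ens{x}$ is precisely the full subseed $\Sigma_{|\x\setminus\ens{x}}$ of $\Sigma$, so that the ring homomorphism $\sigma_{x,1}$ coincides with the restriction homomorphism $\res_{\Sigma,\,\Sigma\setminus\ens{x}}$ of Section \ref{ssection:surjsubseed}. Consequently the verifications carried out there are available here, \emph{except} the single fact that $\sigma_{x,1}$ carries $\mathcal A(\Sigma)$ into $\mathcal A(\Sigma\setminus\ens{x})$ --- and that fact is exactly the standing hypothesis.

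I would then argue as follows. Since $\sigma_{x,1}$ fixes every element of $\x\setminus\ens{x}$, which is the cluster (respectively the exchangeable set) of $\Sigma\setminus\ens{x}$, and sends $x$ to $1\in\Z$, conditions \AM1 and \AM2 hold. For \AM3, the key point is that a $(\sigma_{x,1},\Sigma,\Sigma\setminus\ens{x})$-biadmissible sequence never mutates in the direction $x$: as $\sigma_{x,1}(x)=1$ is not exchangeable, $x$ cannot be its first term, and as mutating in any other direction leaves $x$ in the cluster, an easy induction shows that $x$ survives in every intermediate cluster and is never a mutation direction. Along such a sequence $\sigma_{x,1}$ then intertwines mutations: setting $x=1$ annihilates the at most one factor $x$ occurring in each exchange binomial, while deleting the row and column indexed by $x$ from an exchange matrix commutes with mutation at any other vertex on the remaining entries (this is the computation already performed in Lemma \ref{lem:mutimage} and in the proof of Proposition \ref{prop:res}). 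Hence, granting the hypothesis, the induced ring homomorphism $\mathcal A(\Sigma)\fl\mathcal A(\Sigma\setminus\ens{x})$ satisfies \AM1, \AM2 and \AM3, i.e.\ is a rooted cluster morphism.

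Surjectivity and idealness follow from the same analysis. Every $\Sigma\setminus\ens{x}$-admissible sequence lifts to a $(\sigma_{x,1},\Sigma,\Sigma\setminus\ens{x})$-biadmissible one, because along such sequences the cluster of a mutated seed of $\Sigma$, with $x$ removed, is in canonical bijection under $\sigma_{x,1}$ with the cluster of the correspondingly mutated seed of $\Sigma\setminus\ens{x}$; applying \AM3 to the lift then exhibits every cluster variable of $\Sigma\setminus\ens{x}$ as the $\sigma_{x,1}$-image of a cluster variable of $\Sigma$, and since $\mathcal A(\Sigma\setminus\ens{x})$ is generated over $\Z$ by its cluster variables, $\sigma_{x,1}$ is surjective. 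Finally the image seed is $\sigma_{x,1}(\Sigma)=\Sigma\setminus\ens{x}$ --- indeed $\sigma_{x,1}(\x)=(\x\setminus\ens{x})\sqcup\ens{1}$ and $1$ is not an indeterminate --- so $\mathcal A(\sigma_{x,1}(\Sigma))=\mathcal A(\Sigma\setminus\ens{x})=\sigma_{x,1}(\mathcal A(\Sigma))$ by the surjectivity just obtained, which means $\sigma_{x,1}$ is ideal. The only step calling for any care is \AM3 --- pinning down that biadmissible sequences avoid $x$ and that deleting $x$ is compatible with mutation in the remaining directions --- but this is bookkeeping of exactly the kind carried out in the proof of Proposition \ref{prop:res}, so I do not expect it to be a genuine obstacle.
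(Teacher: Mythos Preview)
Your proposal is correct and follows essentially the same approach as the paper: verify \AM1 and \AM2 directly, observe that biadmissible sequences are exactly the $\Sigma$-admissible sequences avoiding $x$, check \AM3 by induction along such sequences, deduce surjectivity by lifting $\Sigma\setminus\ens{x}$-admissible sequences, and obtain idealness from $\sigma_{x,1}(\Sigma)=\Sigma\setminus\ens{x}$. Your explicit identification of $\sigma_{x,1}$ with the restriction morphism $\res_{\Sigma,\Sigma\setminus\ens{x}}$ of Section~\ref{ssection:surjsubseed} is a nice organizing observation that the paper does not state, but the underlying computations are the same.
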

		\begin{proof}
			Let $\sigma=\sigma_{x,1}$. Then $\sigma$ clearly satisfies \AM 1 and \AM 2. In order to prove \AM 3, it is enough to notice that a $\Sigma$-admissible sequence $(x_{1}, \ldots, x_{l})$ is $(\sigma,\Sigma,\Sigma \setminus \ens x)$-biadmissible if and only if $x_{k} \neq x$ for any $k$ such that $1 \leq k \leq l$ and to proceed by induction on $l$. It follows that $\sigma$ is a rooted cluster morphism if and only if $\sigma$ induces a ring homomorphism $\mathcal A(\Sigma) \fl \mathcal A(\Sigma \setminus \ens{x})$. Its surjectivity comes from the fact that any admissible sequence in $\Sigma \setminus \ens{x}$ can naturally be lifted to a $(\sigma,\Sigma,\Sigma \setminus \ens x)$-biadmissible sequence. In order to prove that $\sigma$ is ideal, it is enough to observe that $\sigma(\Sigma) = \Sigma \setminus \ens{x}$ so that $\sigma(\mathcal A(\Sigma)) = \mathcal A(\Sigma \setminus \ens x) = \mathcal A(\sigma(\Sigma))$.
		\end{proof}

	\subsection{Specialisations for cluster algebras from surfaces}\label{ssection:specialisationsurfaces}
		In this section, we prove that simple specialisations induce surjective rooted cluster morphisms for cluster algebras associated with surfaces.
		
		Given a marked surface $(S,M)$ and an internal arc $\gamma$ in $(S,M)$, we denote by $d_\gamma(S,M)$ the (non-necessarily connected) marked surface obtained by cutting $(S,M)$ along $\gamma$. The arc $\gamma$ induces in $d_\gamma (S,M)$ two new boundary arcs which we denote by $\gamma_1$ and $\gamma_2$. Now if $T$ is a triangulation of $(S,M)$, then 
		$$d_\gamma T = (T \setminus \ens{\gamma}) \sqcup \ens{\gamma_1, \gamma_2}$$
		is a triangulation of $d_\gamma(S,M)$.

		Figures \ref{fig:cutting1} and \ref{fig:cutting2} present examples of cuttings of surfaces along an arc.

		\begin{center}
			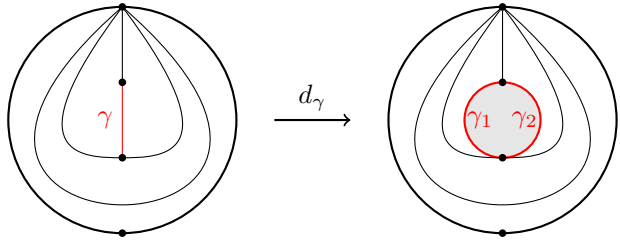
\begin{figure}[htb]
				\begin{tikzpicture}[scale = .5]

					\draw[red]  (0,1) -- (0,-1);
					\fill[red] (0,0) node [left] {$\gamma$};
				
					\draw[thick,->] (4,0) -- (6,0);
					\fill (5,0) node [above] {$d_\gamma$};

					\fill[gray!20] (10,0) circle (1);
					\draw[thick,red] (10,0) circle (1);
					\fill[red] (10,0) node [left] {$\gamma_1$};
					\fill[red] (10,0) node [right] {$\gamma_2$};

					\foreach \x in {0,10}
					{
						\draw[thick] (0+\x,0) circle (3);
						\fill (0+\x,3) circle (.1);
						\fill (0+\x,-3) circle (.1);

						\fill (0+\x,1) circle (.1);
						\fill (0+\x,-1) circle (.1);

						\draw (0+\x,3) .. controls (-8+\x,-4) and (8+\x,-4) .. (0+\x,3);
						\draw (0+\x,3) .. controls (-3+\x,-1) and (-1+\x,-1) .. (0+\x,-1);
						\draw (0+\x,3) .. controls (3+\x,-1) and (1+\x,-1) .. (0+\x,-1);
						\draw (0+\x,3) -- (0+\x,1);
					}
				\end{tikzpicture}
				\caption{Cutting a disc with two punctures to get an annulus.}\label{fig:cutting1}
			\end{figure}
		\end{center}

		\begin{center}
			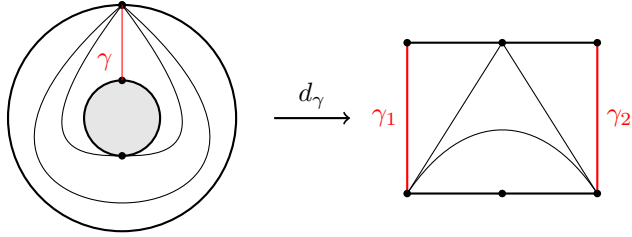
\begin{figure}[htb]
				\begin{tikzpicture}[scale = .5]
					\foreach \x in {0}
					{
						\fill[gray!20] (0+\x,0) circle (1);
						\draw[thick] (0+\x,0) circle (1);

						\draw[thick] (0+\x,0) circle (3);
						\fill (0+\x,3) circle (.1);
						\fill (0+\x,3) circle (.1);

						\fill (0+\x,1) circle (.1);
						\fill (0+\x,-1) circle (.1);

						\draw (0+\x,3) .. controls (-8+\x,-4) and (8+\x,-4) .. (0+\x,3);
						\draw (0+\x,3) .. controls (-3+\x,-1) and (-1+\x,-1) .. (0+\x,-1);
						\draw (0+\x,3) .. controls (3+\x,-1) and (1+\x,-1) .. (0+\x,-1);
						\draw[red] (0+\x,3) -- (0+\x,1);
					}

					\draw[thick,->] (4,0) -- (6,0);
					\fill (5,0) node [above] {$d_\gamma$};

					\foreach \x in {10}
					{
			
						\draw[thick,red] (-2.5+\x,-2) -- (-2.5+\x,2);
						\draw[thick,red] (2.5+\x,-2) -- (2.5+\x,2);
						\draw[thick] (-2.5+\x,-2) -- (2.5+\x,-2);
						\draw[thick] (-2.5+\x,2) -- (2.5+\x,2);
			
						\fill (-2.5+\x,2) circle (.1);
						\fill (-2.5+\x,-2) circle (.1);
						\fill (2.5+\x,2) circle (.1);
						\fill (2.5+\x,-2) circle (.1);

						\fill (\x,2) circle (.1);
						\fill (\x,-2) circle (.1);

						\draw (-2.5+\x,-2) -- (\x,2);
						\draw (2.5+\x,-2) -- (\x,2);
						\draw (-2.5+\x,-2) .. controls (\x-1,.25) and (\x+1,.25) .. (2.5+\x,-2);
					}
					\fill[red] (0,1.5) node [left] {$\gamma$};

					\fill[red] (7.5,0) node [left] {$\gamma_1$};
					\fill[red] (12.5,0) node [right] {$\gamma_2$};
				\end{tikzpicture}
				\caption{Cutting an annulus to get a disc without punctures.}\label{fig:cutting2}
			\end{figure}
		\end{center}
		
		Note that the seed $\Sigma_{d_\gamma T}$ differs from the seed $\Sigma_T \setminus \ens{x_\gamma}$. However, we have $\Sigma_{d_\gamma T}[T \setminus \ens{\gamma_1,\gamma_2}] = \Sigma_T[T \setminus \ens{\gamma}]$.

		\begin{theorem}\label{theorem:specialisationsurfaces}
			Let $(S,M)$ be a marked surface, $T$ be a triangulation of $(S,M)$ and $\Sigma_T$ be the seed associated with $T$. Then for any $\gamma \in T$ which does not enclose a degenerate marked surface, the simple specialisation of $x_\gamma$ to 1 induces an ideal surjective rooted cluster morphism in $\mathcal A(\Sigma_T) \fl \mathcal A(\Sigma_T \setminus \ens{x_\gamma})$.
		\end{theorem}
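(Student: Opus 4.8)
The plan is to invoke Proposition~\ref{prop:specialisation}, which reduces the statement to proving that $\sigma := \sigma_{x_\gamma,1}$ induces a \emph{ring homomorphism} $\mathcal A(\Sigma_T) \fl \mathcal A(\Sigma_T \setminus \ens{x_\gamma})$; the resulting rooted cluster morphism is then automatically ideal and surjective. Since $\mathcal A(\Sigma_T)$ is generated as a $\Z$-algebra by its cluster variables, and these are exactly the variables $x_\delta$ indexed by the arcs $\delta$ of $(S,M)$, it suffices to check that $\sigma(x_\delta) \in \mathcal A(\Sigma_T \setminus \ens{x_\gamma})$ for every such $\delta$. First I would identify the target: the hypothesis that $\gamma$ does not enclose a degenerate marked surface guarantees that $d_\gamma(S,M)$ is again a (possibly disconnected) non-degenerate marked surface, triangulated by $d_\gamma T = (T \setminus \ens{\gamma}) \sqcup \ens{\gamma_1,\gamma_2}$, and — using the identity $\Sigma_{d_\gamma T}[T \setminus \ens{\gamma_1,\gamma_2}] = \Sigma_T[T \setminus \ens{\gamma}]$ noted above — $\Sigma_T \setminus \ens{x_\gamma}$ is the full subseed of $\Sigma_{d_\gamma T}$ indexed by $T \setminus \ens{\gamma}$. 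Proposition~\ref{prop:res} then provides a surjective rooted cluster morphism $\res : \mathcal A(\Sigma_{d_\gamma T}) \fl \mathcal A(\Sigma_T \setminus \ens{x_\gamma})$ which sends $x_{\gamma_1}$ and $x_{\gamma_2}$ to $1$ and fixes every other initial variable; in particular $\mathcal A(\Sigma_T \setminus \ens{x_\gamma})$ contains $\res(x_\epsilon)$ for every arc $\epsilon$ of $d_\gamma(S,M)$.

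The core of the argument is the comparison of $x_\delta$ with cluster variables of the cut surface, which I would split according to whether $\delta$ crosses $\gamma$. If $\delta$ does not cross $\gamma$, then $\delta$ is already an arc of $d_\gamma(S,M)$; by the snake-graph expansion formula of~\cite{MSW:positivity}, the Laurent expansion of $x_\delta$ in the cluster $\x_T$ involves only the arcs of $T$ crossed by $\delta$ — so not $\gamma$ — and agrees with the expansion of $x_\delta$ in $\x_{d_\gamma T}$, which in turn involves neither $x_{\gamma_1}$ nor $x_{\gamma_2}$. Hence $\sigma(x_\delta) = x_\delta = \res(x_\delta) \in \mathcal A(\Sigma_T \setminus \ens{x_\gamma})$, and the same reasoning extends to any multicurve of $(S,M)$ disjoint from $\gamma$, using the corresponding expansion for closed components. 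If instead $\delta$ crosses $\gamma$ exactly $k \geq 1$ times, I would use the skein (smoothing) relations for cluster algebras from surfaces: iteratively resolving the $k$ crossings expresses $x_\gamma^{\,k}\, x_\delta$ as a sum of the elements $[C]$ attached to the $2^k$ multicurves $C$ obtained by smoothing, each of which is a multicurve of $d_\gamma(S,M)$ disjoint from $\gamma$ (a contractible component contributing only a scalar $\pm 2$). Applying $\sigma$ and using $\sigma(x_\gamma) = 1$ gives $\sigma(x_\delta) = \sum_C \sigma([C])$, and by the previous case each $\sigma([C])$ lies in $\mathcal A(\Sigma_T \setminus \ens{x_\gamma})$; therefore so does $\sigma(x_\delta)$.

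I expect the main obstacle to be making this last step fully rigorous. One must justify the smoothing identity in the present framework of non-invertible coefficients — the identity holds in the localisation at the frozen variables, and all the terms involved already lie in the un-localised algebra — and treat carefully the exceptional configurations, namely $\gamma$ being the folded side of a self-folded triangle of $T$ and the boundary-parallel or contractible pieces produced by smoothings; this is exactly where the hypothesis that $\gamma$ encloses no degenerate marked surface is used. One must also verify that the element $[C]$ of a multicurve $C$ avoiding $\gamma$, and its image under $\sigma$, belong to the subalgebra $\mathcal A(\Sigma_T \setminus \ens{x_\gamma})$ and not merely to $\mathcal A(\Sigma_{d_\gamma T})$; this holds because such a $C$ crosses none of $\gamma_1,\gamma_2$ either, so its snake-graph expansion is fixed by $\res$. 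Granting these points, Proposition~\ref{prop:specialisation} promotes the ring homomorphism just constructed to an ideal surjective rooted cluster morphism, as required.
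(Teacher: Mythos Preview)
Your proposal is correct and follows essentially the same strategy as the paper: reduce via Proposition~\ref{prop:specialisation} to showing $\sigma(x_\delta)\in\mathcal A(\Sigma_T\setminus\ens{x_\gamma})$ for each arc $\delta$, pass to the cut surface $d_\gamma(S,M)$ and specialise the two new boundary arcs $\gamma_1,\gamma_2$ to~$1$ (the paper calls this map $\tau$, you realise it as the restriction morphism of Proposition~\ref{prop:res}), and handle arcs crossing $\gamma$ by skein relations so as to reduce to multicurves disjoint from $\gamma$. Your account is in fact slightly more careful than the paper's on two points---writing $x_\gamma^{\,k}x_\delta$ rather than $x_\gamma x_\delta$ when $\delta$ meets $\gamma$ in $k$ points, and invoking the snake-graph expansion to justify $\sigma(x_\theta)=\res(x_\theta)$ for $\theta$ disjoint from $\gamma$---but the underlying argument is the same.
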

		\begin{proof}
			If $\gamma$ is a boundary arc, then $x_\gamma$ is a coefficient and the result is well-known, see \cite{cluster4}. We may thus assume that $\gamma$ is an internal arc. 

			According to Proposition \ref{prop:specialisation}, as $\sigma=\sigma_{x_\gamma,1}$ is a ring homomorphism, in order to prove that $\sigma$ is a rooted cluster morphism, it is enough to prove that the image of $\sigma$ is contained in $\mathcal A(\Sigma_T \setminus \ens {x_\gamma})$. For this we only need to prove that $\sigma(x_\eta) \in \mathcal A(\Sigma_T \setminus \ens {x_\gamma})$ for any (possibly tagged) arc $\eta$ in $(S,M)$. For the sake of simplicity we only prove it for an untagged arc $\eta$. The case of tagged arcs is a straightforward adaptation.
	
			Let $\eta$ be an arc in $(S,M)$. Resolving the intersections of $\eta$ with $\gamma$ (using for instance the resolutions described in \cite{DP:nonorientable} or more generally the skein relations described in \cite{MW:resolutions}), we can write $x_\gamma x_\eta$ as a linear combination of products of $x_\theta$ where $\theta$ runs over a family of curves which do not intersect $\gamma$. 

			Every curve which does not cross $\gamma$ induces a curve in the surface $d_\gamma (S,M)$. Let $\tau$ denote the specialisation of $x_{\gamma_1}$ and $x_{\gamma_2}$ to 1. Then, as $\gamma_1$ and $\gamma_2$ are boundary arcs, $\tau$ is a rooted cluster morphism from $\mathcal A(\Sigma_{d_\gamma T})$ to $\mathcal A(\Sigma_{d_\gamma T} \setminus \ens{x_{\gamma_1},x_{\gamma_2}}) = \mathcal A(\Sigma_{T} \setminus \ens{x_\gamma})$. Moreover, for any arc $\theta$ in $(S,M)$ which does not cross $\gamma$, we have $\sigma(x_\theta) = \tau(x_\theta)$.

			Therefore, $\sigma(x_\eta) = \sigma(x_\gamma x_\eta)$ is a linear combination of $\tau(x_\theta)$ where $\theta$ runs over a family of curves which do not intersect $\gamma$. Now for each such curve, $x_\theta$ is an element of the cluster algebra $\mathcal A(d_\gamma T)$ and thus $\tau(x_\theta)$ is an element of the cluster algebra $\mathcal A(\Sigma_T \setminus \ens{x_\gamma})$.

			The fact that $\sigma$ is surjective is clear since any admissible sequence for $\Sigma_T \setminus \ens{x_\gamma}$ lifts to an admissible sequence for $\Sigma_T$. The fact that it is ideal comes from the fact that $\sigma(\Sigma_T) = \Sigma_T \setminus \ens{x_\gamma}$.
		\end{proof}

		\begin{exmp}
			Consider the once-punctured torus $\mathbb T_1$ and fix a triangulation $T$ of $\mathbb T_1$. It has three arcs which we denote by 1,2 and 3 and which we show as follows in the universal cover of $\mathbb T_1$~:
			\begin{center}
				\begin{figure}[htb]
					\begin{tikzpicture}[scale = .6]

						\fill (1,2) node [above] {\tiny 1};
						\fill (.25,2.5) node {\tiny 2};
						\fill (0,2.5) node [left] {\tiny 3};

						\foreach \x in {0}
						{
							\foreach \y in {0}
							{
								\draw (\x,\y) -- (\x+4,\y);
								\draw (\x,\y+2) -- (\x+4,\y+2);
								\draw (\x,\y+4) -- (\x+4,\y+4);

								\draw (\x,\y) -- (\x,\y+4);
								\draw (\x+2,\y) -- (\x+2,\y+4);
								\draw (\x+4,\y) -- (\x+4,\y+4);

								\draw (\x,\y) -- (\x+4,\y+4);
								\draw (\x,\y+2) -- (\x+2,\y+4);
								\draw (\x+2,\y) -- (\x+4,\y+2);

								\fill (\x,\y) circle (.1);
								\fill (\x+2,\y) circle (.1);
								\fill (\x+4,\y) circle (.1);
								\fill (\x,\y+2) circle (.1);
								\fill (\x+2,\y+2) circle (.1);
								\fill (\x+4,\y+2) circle (.1);
								\fill (\x,\y+4) circle (.1);
								\fill (\x+2,\y+4) circle (.1);
								\fill (\x+4,\y+4) circle (.1);
							}
						}

					\end{tikzpicture}
				\end{figure}
			\end{center}
			The seed corresponding to this triangulation is the coefficient-free seed 
			$$\Sigma_T = \left((x_1,x_2,x_3),\left[\begin{array}{rrr}0 & 2 & -2 \\ -2 & 0 & 2 \\ 2 & -2 &  0\end{array}\right]\right).$$

			Now, an arbitrary cluster variable in $\mathcal A(\mathbb T_1)$ corresponds to a certain arc. In this example we choose for instance the curve $\eta$ shown below. Let us cut $\mathbb T_1$ along the arc 1, which we show dashed. Resolving intersections between $\eta$ and 1, and applying skein relations we get $x_\eta x_1 = x_2x_3 + x_2 x_\theta$ where $\theta$ does not intersect the arc 1. 

			\begin{center}
				\begin{tikzpicture}[scale = .6]

					\draw[dashed,red] (2,2) -- (4,2);

					\draw[blue] (0,0) .. controls (2.5,.5) and (3.5,1.5) .. (4,4);

					\fill[dashed,red!75] (1,2) node [above] {\tiny 1};
					\fill[gray!75] (.25,2.5) node {\tiny 2};
					\fill[gray!75] (0,2.5) node [left] {\tiny 3};

					\fill[blue] (2.5,1.5) node {$\eta$};

					\draw[blue] (8,0) .. controls (8.5,0) and (9.5,1) .. (10,2);
					\draw[blue] (12,2) .. controls (11.5,2.5) and (11.5,3.5) .. (12,4);

					\fill[blue] (9.5,.75) node {\tiny $2$};

					\fill[blue] (11.4,3) node {\tiny $3$};

					\draw[blue] (16,0) .. controls (18,0) and (19,2) .. (20,2);
					\draw[blue] (18,2) .. controls (18.5,2) and (19.5,3) .. (20,4);

					\fill[blue] (17.5,.75) node {\tiny $\theta$};

					\fill[blue] (19.5,2.75) node {\tiny $2$};
				
					\fill (6,2) node {=};
					\fill (14,2) node {+};

					\foreach \x in {0,8,16}
					{
						\foreach \y in {0}
						{
							\draw[dashed,red!75] (\x,\y) -- (\x+4,\y);
							\draw[dashed,red!75] (\x,\y+2) -- (\x+4,\y+2);
							\draw[dashed,red!75] (\x,\y+4) -- (\x+4,\y+4);

							\draw[gray!75] (\x,\y) -- (\x,\y+4);
							\draw[gray!75] (\x+2,\y) -- (\x+2,\y+4);
							\draw[gray!75] (\x+4,\y) -- (\x+4,\y+4);

							\draw[gray!75] (\x,\y) -- (\x+4,\y+4);
							\draw[gray!75] (\x,\y+2) -- (\x+2,\y+4);
							\draw[gray!75] (\x+2,\y) -- (\x+4,\y+2);

							\fill (\x,\y) circle (.1);
							\fill (\x+2,\y) circle (.1);
							\fill (\x+4,\y) circle (.1);
							\fill (\x,\y+2) circle (.1);
							\fill (\x+2,\y+2) circle (.1);
							\fill (\x+4,\y+2) circle (.1);
							\fill (\x,\y+4) circle (.1);
							\fill (\x+2,\y+4) circle (.1);
							\fill (\x+4,\y+4) circle (.1);
						}
					}
				\end{tikzpicture}
			\end{center}

			We can easily compute 
			$$x_\theta = \frac{x_1^2+x_2^2}{x_3}$$
			so that $\sigma_{x_1,1}(x_\theta) = \frac{1+x_2^2}{x_3} \in \mathcal A(\Sigma \setminus \ens{x_1})$. Let us also give a geometric argument.

			Cutting $\mathbb T_1$ along the arc 1, we get the annulus $C_{1,1}$ with one marked point on each boundary component and $\ens{2,3}$ together with the two boundary arcs $b$ and $b'$ induce a triangulation of $C_{1,1}$. The arc $\theta$ in $\mathbb T_1$ induces an arc in $C_{1,1}$ and we denote by $x'_{\theta}$ the corresponding cluster variable in $\mathcal A(C_{1,1})$. 

			\begin{center}
				\begin{tikzpicture}[scale = .6]

					\draw[blue] (0,0) .. controls (2,0) and (3,2) .. (4,2);

					\fill[blue] (1.5,.75) node {\tiny $\theta$};

					\fill[red] (1,2) node [above] {\tiny $b$};
					\fill[red] (1,0) node [below] {\tiny $b'$};

					\fill[red] (3,2) node [above] {\tiny $b$};
					\fill[red] (3,0) node [below] {\tiny $b'$};

					\foreach \x in {0}
					{
						\foreach \y in {0}
						{
							\draw[thick,red!75] (\x,\y) -- (\x+4,\y);
							\draw[thick,red!75] (\x,\y+2) -- (\x+4,\y+2);

							\draw[gray!75] (\x,\y) -- (\x,\y+2);
							\draw[gray!75] (\x+2,\y) -- (\x+2,\y+2);
							\draw[gray!75] (\x+4,\y) -- (\x+4,\y+2);

							\draw[gray!75] (\x,\y) -- (\x+2,\y+2);
							\draw[gray!75] (\x+2,\y) -- (\x+4,\y+2);

							\fill (\x,\y) circle (.1);
							\fill (\x+2,\y) circle (.1);
							\fill (\x+4,\y) circle (.1);
							\fill (\x,\y+2) circle (.1);
							\fill (\x+2,\y+2) circle (.1);
							\fill (\x+4,\y+2) circle (.1);
						}
					}
				\end{tikzpicture}
			\end{center}			

			Then, it follows directly from the various expansion formulae for cluster variables associated with arcs (see for instance \cite{ST:unpunctured,MSW:positivity} or \cite{ADSS:strings}) that $x_\theta$ is given by $x'_\theta$ where the variables $x_b$ and $x_{b'}$ corresponding to the two boundary components are identified with $x_1$. Indeed, a direct computation gives
			$$x'_\theta = \frac{x_bx_{b'}+x_2^2}{x_3}.$$

			In particular, if we specialise $x_b,x_b'$ and $x_1$ to 1, we still get equality. But the cluster algebra associated with $C_{1,1}$ whose frozen variables are specialised to 1 is nothing but the cluster algebra associated with the seed  $\mathcal A(\Sigma_T \setminus \ens{x_1})$.
		\end{exmp}

	\subsection{Specialisations in acyclic cluster algebras}
		We now prove an analogue result for acyclic cluster algebras using techniques coming from additive categorifications of cluster algebras. We refer the interested reader to \cite{Keller:survey} and references therein.

		Philosophically speaking, the proof of the following theorem follows the lines of the proof of Theorem \ref{theorem:specialisationsurfaces} where, in the spirit of \cite{BZ:clustercatsurfaces}, objects in categories should be thought as curves in a surface, extensions of objects as intersections of the corresponding curves and ``Hall products'' as skein relations.

		\begin{theorem}\label{theorem:acyclicspe}
			Let $\Sigma=(\x,\ex,B)$ be a seed which is mutation-equivalent to a finite skew-symmetric acyclic seed. Then for any $x \in \x$, we have $\sigma_{x,1}(\mathcal A(\Sigma)) \subset \mathcal A(\Sigma \setminus \ens x) \otimes_{\Z} \Q$.
		\end{theorem}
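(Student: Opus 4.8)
The plan is to reduce the statement to a categorical computation in the cluster category of the acyclic quiver, using the fact that the cluster variable associated to an object is given by the Caldero--Chapoton map and that exchange relations come from non-split triangles (the ``Hall product'' philosophy alluded to before the statement). First I would invoke Proposition \ref{prop:specialisation}: since $\sigma=\sigma_{x,1}$ visibly satisfies \AM1 and \AM2, and since a $\Sigma$-admissible sequence is $(\sigma,\Sigma,\Sigma\setminus\ens x)$-biadmissible exactly when it avoids $x$, it suffices to show that $\sigma$ induces a ring homomorphism, i.e.\ that $\sigma(x_\eta)\in\mathcal A(\Sigma\setminus\ens x)\otimes_\Z\Q$ for every cluster variable $x_\eta$ of $\mathcal A(\Sigma)$. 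Because everything only depends on the simplification of the seed and on the mutation class, and because the statement is insensitive to relabelling, I may assume $\Sigma=(\x,\ex,B)$ is itself finite skew-symmetric acyclic, say $B=B_Q$ for an acyclic quiver $Q$, and that $x=x_i$ is the initial cluster variable attached to a vertex $i$ of $Q$; the general cluster variable $x_\eta$ then corresponds to a rigid indecomposable object $M_\eta$ of the cluster category $\mathcal C_Q$ via $x_\eta=X_{M_\eta}$, the Caldero--Chapoton character.

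Next I would analyse what $\sigma_{x_i,1}$ does to $X_{M}$. Writing the Caldero--Chapoton formula $X_M=\sum_{e}\chi(\Gr_e M)\prod_j x_j^{\langle\,\cdot\,\rangle}$ in the initial cluster, specialising $x_i\mapsto 1$ simply deletes the $x_i$-exponent from every monomial; the resulting Laurent polynomial in the remaining variables is a priori only an element of $\Z[\x\setminus\ens{x_i}][\ex^{\pm1}\setminus\ens{x_i^{\pm1}}]\otimes\Q$. The key point is to show this lands in $\mathcal A(\Sigma\setminus\ens{x_i})\otimes\Q$. For this I would argue by a double induction: induction on the number of vertices of $Q$ (so that the full subquiver $Q'=Q\setminus\ens i$ and its cluster category $\mathcal C_{Q'}$ are available) and induction on the ``level'' of $x_\eta$, i.e.\ the minimal length of an admissible sequence producing $\eta$ from the initial seed. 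The base case — initial cluster variables — is immediate since $\sigma(x_j)=x_j$ for $j\neq i$ and $\sigma(x_i)=1$. For the inductive step, if $x_\eta'=\mu_{\eta_k}\circ\cdots\circ\mu_{\eta_1}(x_\eta)$ arises by one further mutation, I use an exchange relation $x_\eta x_\eta'=\prod y^{a_y}+\prod z^{b_z}$ whose two terms are monomials in a cluster $\mathbf u$ of $\mathcal A(\Sigma)$; applying $\sigma$ and using that $\sigma$ is a ring homomorphism on the subalgebra already built (by the induction hypothesis all the $x_\eta$, $y$, $z$ appearing have images in $\mathcal A(\Sigma\setminus\ens{x_i})\otimes\Q$), I solve for $\sigma(x_\eta')$ provided $\sigma(x_\eta)$ is invertible in that ring up to denominators — which is where tensoring with $\Q$ is used, since $\sigma(x_\eta)$ may become a non-unit integer like $2$.

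The cleanest way to make the inductive mechanism rigorous, and the route I would actually take, is the categorical ``resolution'' argument: for a non-rigid or intersecting configuration one writes, via the multiplication formula of Caldero--Keller / the skein-type relations of Cerulli Irelli--Labardini--Plamondon, the product $X_{M[i]}\,X_{M_\eta}$ — where $M[i]$ is a suitable object ``at vertex $i$'' — as a $\Z$-linear combination of $X_{\theta}$ over objects $\theta$ that are ``disjoint from $i$'', meaning they lie in the thick subcategory $\mathcal C_{Q'}\subset\mathcal C_Q$ generated by the vertices $\neq i$; each such $X_\theta$ specialises under $\sigma_{x_i,1}$ to the corresponding Caldero--Chapoton character computed in $\mathcal C_{Q'}$, hence lies in $\mathcal A(\Sigma\setminus\ens{x_i})$. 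Solving for $\sigma(X_{M_\eta})$ as a $\Q$-linear combination of these then gives the membership in $\mathcal A(\Sigma\setminus\ens{x_i})\otimes\Q$, in exact parallel with how one cuts a surface along an arc in the proof of Theorem \ref{theorem:specialisationsurfaces}. The main obstacle — and the place the argument has to be handled with real care — is proving that after deleting vertex $i$ one genuinely recovers the Caldero--Chapoton data of $\mathcal C_{Q'}$, i.e.\ that the specialisation is compatible with the projection $\mathcal C_Q\to\mathcal C_{Q'}$ on the level of Euler characteristics of quiver Grassmannians and $E$-invariants; equivalently, controlling how the combinatorial resolution of the relevant intersection/extension decomposes so that the ``disjoint from $i$'' terms really are closed under the procedure. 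Once that compatibility is established, the two-fold induction closes and the theorem follows.
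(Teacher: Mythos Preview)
Your proposal has the right spirit --- a multiplication formula in the cluster category playing the role of skein relations --- but there are two genuine gaps.

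First, the reduction ``we may assume $\Sigma$ is itself acyclic with $x=x_i$ an initial variable'' is invalid. The target of the statement is $\mathcal A(\Sigma\setminus\ens x)$, and this seed depends on the pair $(\Sigma,x)$; if you mutate $\Sigma$ to an acyclic seed $\Sigma_0$, the variable $x$ need not lie in the cluster of $\Sigma_0$, and even when it does there is no reason for $\Sigma\setminus\ens x$ and $\Sigma_0\setminus\ens x$ to be mutation-equivalent. The paper avoids this by working in the cluster category $\CC$ of \emph{some} acyclic $Q$ in the mutation class, but with the cluster-tilting object $T=\bigoplus_{y\in\x}T_y$ corresponding to the \emph{given} seed $\Sigma$. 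The role of ``deleting vertex $i$'' is then played by the summand $T_x$, and the correct replacement for your ``thick subcategory $\CC_{Q'}$'' is the Iyama--Yoshino reduction $\CC'=T_x^{\bot}/T_x[1]$, which is again a cluster category (Keller--Reiten) because the quiver of $T'=\bigoplus_{y\neq x}T_y$ is a full subquiver of that of $T$ and hence mutation-acyclic.

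Second, your categorical step asserts that $X_{T_x[1]}X_M=xX_M$ decomposes directly into characters of objects ``disjoint from $i$''. It does not: Palu's formula writes $(\dim\Ext^1_{\CC}(T_x[1],M))\,xX_M$ as a $\Z$-linear combination of $X_Y$ where $Y$ runs over middle terms of non-split triangles with outer terms $M$ and $T_x[1]$, and such $Y$ need not lie in $T_x^{\bot}$. The missing idea --- and the heart of the paper's argument --- is the strict inequality $\dim\Hom_{\CC}(T_x,Y)<\dim\Hom_{\CC}(T_x,M)$ for every such $Y$, obtained by applying $\Hom_{\CC}(T_x,-)$ to both triangle types and invoking the $2$-Calabi--Yau duality. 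This gives an induction on $\dim\Hom_{\CC}(T_x,M)$: iterating, some $x^nX_M$ becomes a $\Q$-linear combination of $X_Y$ with $Y\in T_x^{\bot}$, and for those one checks directly that $\sigma(X_Y)=X'_{\overline Y}\in\mathcal A(\Sigma\setminus\ens x)$. Your alternative ``level'' induction via exchange relations cannot replace this, because $\sigma(x_\eta)$ is generally a non-invertible \emph{non-constant} element (cf.\ Example~\ref{exmp:spe}, where a specialised cluster variable equals $\tfrac{8}{x_1x_2x_3}+1$), so one cannot divide by it even after tensoring with $\Q$.
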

		\begin{proof}
			Without loss of generality we can assume that $\Sigma = \overline \Sigma$ is simplified. Let $Q$ be an acyclic quiver corresponding to the exchange matrix of a seed which is mutation-equivalent to $\Sigma$. 

			Let $\CC$ be the cluster category of $Q$ over an algebraically closed field $\k$ introduced in \cite{BMRRT}. Its suspension functor is denoted by $[1]$. Let $T = \bigoplus_{y \in \x} T_y$ be a cluster-tilting object in $\CC$ corresponding to the seed $\Sigma$, see \cite{CK2,FK}.

			Let $T_x^{\bot}$ be the full subcategory of $\CC$ formed by the objects $V$ such that $\Hom_{\CC}(T_x,V)=0$. Then it follows from \cite{IY} that $\CC' = T_x^{\bot}/T_x[1]$ is a Hom-finite 2-Calabi-Yau category and that $T' = \bigoplus_{y \in \x \setminus \ens x} T_y$ is a cluster-tilting object in $\CC'$. The quiver $Q_{T'}$ of the endomorphism ring of $T'$ is a full subquiver of the quiver $Q_T$ of the endomorphism ring of $T$. By assumption $Q_T$ is mutation-acyclic. Thus, it follows from \cite{BMR2} (see also \cite{Warkentin:mutationacyclic}) that $Q_{T'}$ is also mutation-acyclic and therefore $\CC'$ is triangle equivalent to a cluster category, see \cite{KR:acyclic}.

			We denote by $X_?$ (or $X'_?$, respectively) the cluster character associated to $T$ on $\CC$ (or to $T'$ on $\CC'$, respectively), see \cite{Palu}. Since $\CC$ and $\CC'$ are cluster categories associated to acyclic quivers, it follows from \cite[Theorem 4]{GLS:generic} that the map $X_?$ (or $X'_?$, respectively) has values in the cluster algebra $\mathcal A(\Sigma)$ (or $\mathcal A(\Sigma \setminus \ens x)$, respectively).

			Let $\sigma = \sigma_{x,1}$ be the simple specialisation of $x$ to 1. Let $M$ be an object in $\CC$. We prove by induction on the dimension of $\Hom_{\CC}(T_x,M)$ that $\sigma(X_M)$ is a finite $\Q$-linear combination of $X'_Y$ where $Y$ runs over the objects of $\CC'$. 

			Assume first that $\Hom_{\CC}(T_x,M) = 0$, that is, $M$ belongs to $T_x^{\bot}$. Then $M$ can be decomposed as $\overline M \oplus T_x[1]^m$ for some $m \geq 1$ where $\overline M$ has no direct summand isomorphic to $T_x[1]$. Therefore, 
			$$\sigma(X_M) = \sigma( X_{\overline M \oplus T_x[1]^m}) = \sigma(X_{\overline M} X_{T_x[1]}^m) = \sigma(X_{\overline M})\sigma(X_{T_x[1]}^m) = \sigma(X_{\overline M})\sigma(x^m) = \sigma(X_{\overline M}).$$
			Since $M$ belongs to $T_x^{\bot}$, the object $\overline M$ belongs to $T_x^{\bot}/T_x[1]$ and is thus identified with an object in $\CC'$. Now it follows easily from the definition of the cluster characters that $\sigma(X_{\overline M}) = X'_{\overline M}$. Therefore, $\sigma(X_M)$ belongs to $\mathcal A(\Sigma \setminus \ens x)$.

			Assume now that $\Hom_{\CC}(T_x,M) \neq 0$. Therefore, 
			$$\Ext^1_{\CC}(T_x[1],M) = \Hom_{\CC}(T_x[1],M[1]) \simeq \Hom_{\CC}(T_x,M) \neq 0.$$
			Then it follows from \cite{Palu:multiplication} that the product $(\dim \Ext^1_{\CC}(T_x[1],M) X_{T_x[1]} X_M)$ is a $\Z$-linear combination of $X_Y$'s where $Y$ runs over middle terms of non-split triangles of the form 
			\begin{equation}\label{eq:MTx}
				M \fl Y \fl T_x[1] \fl M[1]
			\end{equation}
			or of the form
			\begin{equation}\label{eq:TxM}
				T_x[1] \fl Y \xrightarrow{a} M \xrightarrow{b} T_x[2].
			\end{equation}
			In other words, we have 
			\begin{equation}\label{eq:expansion}
				xX_M = \sum_Y n_Y X_Y
			\end{equation}
			where $(n_Y) \subset \Q$ is finitely supported on a set of isoclasses of objects $Y$ in $\CC$ such that there exists triangles of the form \eqref{eq:MTx} or \eqref{eq:TxM}.

			We claim that $\dim \Hom_{\CC}(T_x,Y) < \dim \Hom_{\CC}(T_x,M)$ for any $Y$ such that $n_Y \neq 0$. Indeed, assume first that $Y$ is such that there exists a triangle of the form \eqref{eq:MTx}, that is, 
			$$T_x \xrightarrow{\alpha} M \xrightarrow{\beta} Y \fl T_x[1] \fl M[1].$$
			Applying the homological functor $\Hom_{\CC}(T_x,-)$ to this triangle, we get the long exact sequence
			$$\End_{\CC}(T_x) \xrightarrow{\alpha_*} \Hom_{\CC}(T_x,M) \xrightarrow{\beta_*} \Hom_{\CC}(T_x,Y) \fl 0,$$
			so that the post-composition by $\beta$ yields an epimorphism $\Hom_{\CC}(T_x,M) \xrightarrow{\beta_*} \Hom_{\CC}(T_x,Y)$. Since the sequence is exact, we have $\ker(\beta_*) = \im(\alpha_*)$. Moreover, $\End_{\CC}(T_x) \xrightarrow{\alpha_*} \Hom_{\CC}(T_x,M)$ is non-zero since $\alpha_*(\id_{T_x}) = \alpha \neq 0$. Therefore, $\dim \Hom_{\CC}(T_x,Y) < \dim \Hom_{\CC}(T_x,M)$ in this case, as claimed.

			Assume now that $Y$ is such that there exists a triangle of the form \eqref{eq:TxM}, that is, 
			$$T_x[1] \fl Y \xrightarrow{a} M \xrightarrow{b} T_x[2].$$
			Applying the homological functor $\Hom_{\CC}(T_x,-)$ to this triangle, we get the long exact sequence
			$$0 \fl \Hom_{\CC}(T_x,Y) \xrightarrow{a_*} \Hom_{\CC}(T_x,M) \xrightarrow{b_*} \Hom_{\CC}(T_x,T_x[2]).$$
			Therefore, the post-composition by $a$ yields an injection $\Hom_{\CC}(T_x,Y) \xrightarrow{a_*} \Hom_{\CC}(T_x,M)$. In order to prove that the injection is proper, since the sequence is exact, we need to prove that the post-composition $\Hom_{\CC}(T_x,M) \xrightarrow{b_*} \Hom_{\CC}(T_x,T_x[2])$ is non-zero. Since $\CC$ is 2-Calabi-Yau, we have the commutative diagram
			$$\xymatrix{
				\Hom_{\CC}(T_x,M) \ar[d]_{\sim} \ar[r]^{b_*} & \Hom_{\CC}(T_x,T_x[2]) \ar[d]^{\sim} \\
				D\Hom_{\CC}(M,T_x[2]) \ar[r]^{Db^*} & D\End_{\CC}(T_x[2])
			}$$
			where $D = \Hom_{\k}(-,\k)$ is the standard duality and $\End_{\CC}(T_x[2]) \xrightarrow{b^*} \Hom_{\CC}(M,T_x[2])$ is the pre-composition by $b$. Since $b$ is non-zero, $b^*(\id_{T_x[2]}) \neq 0$ and thus $Db^*$ is non-zero so that $b_*$ is non-zero. This proves the claim.

			Therefore, equality \eqref{eq:expansion} allows to write $xX_M$ as a $\Q$-linear combination of elements $X_Y$ where $\dim \Hom_{\CC}(T_x,Y) < \dim \Hom_{\CC}(T_x,M)$. If $\dim \Hom_{\CC}(T_x,Y) = \dim \Ext^1_{\CC}(T_x[1],Y) \neq 0$, we can again write $xX_Y$ as a linear combination of cluster characters of objects for which the dimension is strictly smaller. Proceeding by induction, there exists some $n \geq 1$ such that $x^n X_M$ is a $\Q$-linear combination of elements of the form $X_Y$ where $\dim \Hom_{\CC}(T_x,Y) = 0$. Therefore $\sigma(X_M) = \sigma(x^n X_M)$ is a $\Q$-linear combination of $\sigma(X_Y)$ with $\dim \Hom_{\CC}(T_x,Y) = 0$. Hence, it follows from the previous discussion that $\sigma(X_M)$ belongs to $\mathcal A(\Sigma \setminus \ens x)$.
		\end{proof}

		\begin{rmq}
			In the proof of Theorem \ref{theorem:acyclicspe}, the assumption that $\Sigma$ is mutation-acyclic is only used in order to prove that the cluster characters take their values in the cluster algebras. In fact, the proof shows that for any cluster variable $x$ in a seed $\Sigma$ admitting an additive Hom-finite 2-Calabi-Yau categorification, the map $\sigma_{x,1}$ sends any cluster variable in $\mathcal A(\Sigma)$ to a $\Q$-linear combination of cluster characters associated to objects in a Hom-finite 2-Calabi-Yau categorification of $\Sigma \setminus \ens x$. However, in general these latter cluster characters may not live in the cluster algebra but only in the upper cluster algebra, see for instance \cite[Example 5.6.3]{Plamondon:thesis}. Interactions of simple specialisations with lower and upper bounds of cluster algebras are studied in Subsection \ref{ssection:spegeneral}.
		\end{rmq}

		The following example illustrates the fact appearing in the proofs of Theorems \ref{theorem:specialisationsurfaces} and \ref{theorem:acyclicspe} that the simple specialisation of an exchangeable variable to 1 does not send a cluster variable to a cluster variable in general, but rather sends it to a linear combination of elements in the cluster algebra. 

		\begin{exmp}\label{exmp:spe}
			Consider the coefficient-free seed $\Sigma = ((x_1,x_2,x_3,x_4),B)$ where
			$$B= \left[\begin{array}{rrrr}
				0 & 0 & 0 & 1 \\
				0 & 0 & 0 & 1 \\
				0 & 0 & 0 & 1 \\
				-1 & -1 & -1 & 0 \\
			\end{array}\right]$$
			is the incidence matrix of the quiver $Q$ of Dynkin type $D_4$ where 4 is a sink.

			We consider the specialisation of $x_4$ to 1 so that we also consider the seed 
			$$\Sigma \setminus \ens{x_4}  = \left( (x_1,x_2,x_3), [0] \right)$$
			which is of type $A_1 \times A_1 \times A_1$. In particular,
			$$\mathcal A(\Sigma \setminus \ens{x_4}) = \Z\left[x_1,\frac{2}{x_1},x_2,\frac{2}{x_2},x_3,\frac{2}{x_3}\right] \subset \Q(x_1,x_2,x_3).$$

			Consider the cluster variable in $\mathcal A(\Sigma)$
			$$x = \frac{1+x_1x_2x_3+3x_4+3x_4^2+x_4^3}{x_1x_2x_3x_4}$$
			which, in the context of \cite{CC,BMRRT}, corresponds to the cluster character of the indecomposable representation of $Q$ with dimension vector $(1111)$.

			Then
			$$\sigma_{x_4,1} (x) = \frac{8}{x_1x_2x_3} +1 = \frac{2}{x_1} \frac{2}{x_2} \frac{2}{x_3}+1$$
			is the sum of 1 and the cluster monomial in $\mathcal A(\Sigma \setminus \ens{x_4})$ corresponding to the cluster character of the semisimple representation with dimension vector $(111)$ of $Q \setminus \ens{4}$.

			Now consider the cluster variable in $\mathcal A(\Sigma)$
			$$z = \frac{1 + 2x_1x_2x_3 + x_1^2x_2^2x_3^2 + 3x_4 + 3x_1x_2x_3x_4 + 3 x_4^2 + x_4^3}{x_1x_2x_3x_4^2}$$
			which corresponds to the cluster character of the indecomposable representation of $Q$ with dimension vector $(1112)$.

			Then
			$$\sigma_{x_4,1} (x) = \frac{2}{x_1}\frac{2}{x_2}\frac{2}{x_3} + 5 + x_1x_2x_3.$$
			is a linear combination of three distinct cluster monomials in $\mathcal A(\Sigma \setminus \ens{x_4})$. 
		\end{exmp}

	\subsection{Simple specialisations in general}\label{ssection:spegeneral}
		In general, we expect that simple specialisations of cluster variables to 1 induce rooted cluster morphisms. 
		\begin{prob}\label{prob:specialisation}
			Let $\Sigma=(\x,\ex,B)$ be a seed and let $x \in \x$. Then does $\sigma_{x,1}$ induce a surjective ideal rooted cluster morphism $\mathcal A(\Sigma) \fl \mathcal A(\Sigma \setminus \ens x)$~?
		\end{prob}

		We now prove that simple specialisations preserve upper and lower bounds of cluster algebras in general.

		Given a seed $\Sigma = (\x,\ex,B)$, and given an element $x \in \ex$, we denote by $\mu_x(\ex)$ the set of exchangeable variables in $\mu_x(\Sigma)$ so that $\Z[\x \setminus \ex][\mu_x(\ex)^{\pm 1}]$ is the set of Laurent polynomials in the exchangeable variables of $\mu_x(\Sigma)$ with polynomial coefficients in the frozen variables of $\mu_x(\Sigma)$.
		We also set 
		$$\ex' = \bigcup_{x \in \ex} \mu_x(\ex)$$
		to be the set of all the exchangeable variables in the seeds obtained from $\Sigma$ by applying exactly one mutation.

		Following \cite{cluster3}, we set~:
		\begin{defi}
			\begin{enumerate}
				\item The \emph{lower bound} of $\mathcal A(\Sigma)$ is 
					$$\mathcal L(\Sigma) = \Z[\x \setminus \ex][\ex \cup \ex'].$$
				\item The \emph{upper bound} of $\mathcal A(\Sigma)$ is
					$$\mathcal U(\Sigma) = \Z[\x \setminus \ex][\ex^{\pm 1}] \cap \bigcap_{x \in \ex} \Z[\x \setminus \ex][\mu_x(\ex)^{\pm 1}].$$ 
			\end{enumerate}
		\end{defi}
			
		These are subalgebras of $\mathcal F_{\Sigma}$ and we always have the inclusions $\mathcal L(\Sigma) \subset \mathcal A(\Sigma) \subset \mathcal U(\Sigma)$.

		\begin{prop}\label{prop:spebounds}
			Let $\Sigma = (\x,\ex,B)$ be a seed and let $x \in \x$. Then~:
			\begin{enumerate}
				\item $\sigma_{x,1}(\mathcal L(\Sigma)) \subset \mathcal L(\Sigma \setminus \ens x)$~;
				\item $\sigma_{x,1}(\mathcal U(\Sigma)) \subset \mathcal U(\Sigma \setminus \ens x)$.
			\end{enumerate}
		\end{prop}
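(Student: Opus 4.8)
The plan is to reduce both inclusions to a description of how the simple specialisation $\sigma := \sigma_{x,1}$ acts on the clusters occurring in the definitions of $\mathcal L$ and $\mathcal U$: the initial cluster $\x$ and the once-mutated clusters $\mu_y(\x)$ with $y \in \ex$. Set $\widetilde\Sigma = \Sigma \setminus \ens x = (\widetilde\x,\widetilde\ex,\widetilde B)$, so that $\widetilde\x = \x\setminus\ens x$, $\widetilde\ex = \ex\setminus\ens x$ and $\widetilde\x\setminus\widetilde\ex = (\x\setminus\ex)\setminus\ens x$; recall that $\mathcal L(\Sigma)$ and $\mathcal U(\Sigma)$ both lie inside $\Z[\x\setminus\ex][\ex^{\pm1}]$, on which $\sigma$ is a well-defined ring homomorphism. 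First I would record two elementary facts. (a) $\sigma$ sends $\Z[\x\setminus\ex]$ into $\Z[\widetilde\x\setminus\widetilde\ex]$, fixes every element of $\widetilde\ex$, and sends $x$ to $1$. (b) For every $y\in\widetilde\ex$ one has $\sigma(\mu_{y,\Sigma}(y)) = \mu_{y,\widetilde\Sigma}(y)$: this is immediate from the exchange relation $y\,\mu_{y,\Sigma}(y) = \prod_{b_{yz}>0}z^{b_{yz}} + \prod_{b_{yz}<0}z^{-b_{yz}}$ together with $\widetilde b_{yz}=b_{yz}$ for $z\neq x$, since applying $\sigma$ merely deletes the power of $x$ from one of the two monomials. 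Combining (a) and (b): $\sigma$ fixes every variable of the cluster $\mu_y(\x)=(\x\setminus\ens y)\sqcup\ens{\mu_{y,\Sigma}(y)}$ except $x$ (sent to $1$) and $\mu_{y,\Sigma}(y)$ (sent to $\mu_{y,\widetilde\Sigma}(y)$), and the resulting family of values other than $1$ is exactly the cluster $\mu_y(\widetilde\x)=(\widetilde\x\setminus\ens y)\sqcup\ens{\mu_{y,\widetilde\Sigma}(y)}$ of $\widetilde\Sigma$; similarly $\sigma$ takes $\x$ to $\widetilde\x$ up to sending $x$ to $1$.

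For part (1), recall that $\mathcal L(\Sigma)$ is generated over $\Z$ by $\x\setminus\ex$, by $\ex$, and by the sets $\mu_y(\ex)=(\ex\setminus\ens y)\sqcup\ens{\mu_{y,\Sigma}(y)}$ for $y\in\ex$; I would simply check that $\sigma$ maps each generator into $\mathcal L(\widetilde\Sigma)$. By (a), $\sigma(\x\setminus\ex)\subseteq\Z[\widetilde\x\setminus\widetilde\ex]$ and $\sigma(\ex)\subseteq\widetilde\ex\cup\ens1$, both contained in $\mathcal L(\widetilde\Sigma)$. For $y\in\widetilde\ex$, (a) and (b) give $\sigma(\mu_y(\ex))\subseteq(\widetilde\ex\cup\ens1)\cup\ens{\mu_{y,\widetilde\Sigma}(y)}\subseteq\mathcal L(\widetilde\Sigma)$, because $\mu_{y,\widetilde\Sigma}(y)$ belongs to the generating set $\mu_y(\widetilde\ex)$. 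Finally, if $x\in\ex$, the one remaining generating set is $\mu_x(\ex)$, which is sent to $\widetilde\ex\cup\ens{\sigma(\mu_{x,\Sigma}(x))}$ with $\sigma(\mu_{x,\Sigma}(x))=\prod_{b_{xz}>0}z^{b_{xz}}+\prod_{b_{xz}<0}z^{-b_{xz}}\in\Z[\widetilde\x]\subseteq\mathcal L(\widetilde\Sigma)$. Hence $\sigma(\mathcal L(\Sigma))\subseteq\mathcal L(\widetilde\Sigma)$.

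For part (2), let $P\in\mathcal U(\Sigma)$; I would show $\sigma(P)$ lies in each of the rings whose intersection is $\mathcal U(\widetilde\Sigma)$. From $P\in\Z[\x\setminus\ex][\ex^{\pm1}]$ and (a) one gets $\sigma(P)\in\Z[\widetilde\x\setminus\widetilde\ex][\widetilde\ex^{\pm1}]$, since $\sigma$ sends the generators $\x\setminus\ex$ (occurring with non-negative exponents) into $\Z[\widetilde\x\setminus\widetilde\ex]$ and $\ex^{\pm1}$ into $\widetilde\ex^{\pm1}\cup\ens1$. Now fix $y\in\widetilde\ex$: then $P\in\Z[\x\setminus\ex][\mu_y(\ex)^{\pm1}]$, that is, $P$ expressed in the cluster $\mu_y(\x)$ is a Laurent polynomial which is Laurent in $\mu_y(\ex)$ and polynomial in $\x\setminus\ex$. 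Applying $\sigma$ and using the description above of its action on $\mu_y(\x)$, I would note that $x$ occurs in this expression either among $\x\setminus\ex$ with non-negative exponent (when $x$ is a frozen variable) or among $\mu_y(\ex)$ with arbitrary exponent (when $x$ is exchangeable, which is legitimate since $x\neq y$); in either case substituting $x=1$ and $\mu_{y,\Sigma}(y)=\mu_{y,\widetilde\Sigma}(y)$ yields a Laurent polynomial that is Laurent in $\mu_y(\widetilde\ex)$ and polynomial in $\widetilde\x\setminus\widetilde\ex$, i.e. $\sigma(P)\in\Z[\widetilde\x\setminus\widetilde\ex][\mu_y(\widetilde\ex)^{\pm1}]$. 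Since this holds for every $y\in\widetilde\ex$, intersecting with the first membership gives $\sigma(P)\in\mathcal U(\widetilde\Sigma)$.

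Every step here is a routine manipulation of Laurent monomials, so the main obstacle is keeping the bookkeeping straight — in particular in the exchangeable case $x\in\ex$, where $x$ may carry negative exponents in the Laurent expansions, where one must make sure that (b) really identifies $\sigma(\mu_{y,\Sigma}(y))$ with the exchange polynomial of $\widetilde\Sigma$ (this is exactly where the full-subseed description of $\Sigma\setminus\ens x$ enters) and that $y$ stays exchangeable after the mutation at $y$ so that $\mu_y(\widetilde\ex)$ is meaningful. There is nothing deeper involved, and no appeal to the Laurent phenomenon is needed beyond the ambient containments $\mathcal L(\Sigma),\mathcal U(\Sigma)\subseteq\Z[\x\setminus\ex][\ex^{\pm1}]$.
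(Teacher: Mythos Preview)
Your proof is correct and follows essentially the same approach as the paper's: both arguments rest on the computation that $\sigma$ fixes the frozen variables other than $x$, fixes the exchangeable variables other than $x$, and sends $\mu_{y,\Sigma}(y)$ to $\mu_{y,\widetilde\Sigma}(y)$ for $y\in\widetilde\ex$ (your fact~(b)), together with the observation that $\sigma(\mu_{x,\Sigma}(x))$ lands in $\Z[\widetilde\x]$ when $x$ is exchangeable. Your write-up is in fact more explicit than the paper's, which after recording these same computations simply asserts that both inclusions ``easily follow''; in particular you spell out for part~(2) why the Laurent expansion of $P$ in each once-mutated cluster $\mu_y(\x)$ with $y\in\widetilde\ex$ is carried by $\sigma$ into $\Z[\widetilde\x\setminus\widetilde\ex][\mu_y(\widetilde\ex)^{\pm1}]$, which is the substantive point.
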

		\begin{proof}
			If $x$ is frozen, the result is clear. Therefore, we fix some exchangeable variable $x$ in $\ex$. In order to simplify the notations, we set $\sigma = \sigma_{x,1}$. 

			Let $z$ be a cluster variable in $\Sigma$. If $z$ is frozen, then $\sigma(z) = z$ so that $\sigma(z)$ is both in the upper and in the lower bounds of $\mathcal A(\Sigma \setminus \ens x)$. Assume now that $z$ is exchangeable. If $z \neq x$, we have
			\begin{align*}
				\sigma(\mu_{z,\Sigma}(z)) 
					& = \sigma\left(\frac{1}{z} \left( \prod_{\substack{b_{yz}>0~;\\ y \in \x}} y^{b_{yz}} + \prod_{\substack{b_{yz}<0~;\\ y \in \x}} y^{-b_{yz}}\right)\right) \\
					& = \frac{1}{z} \left( \prod_{\substack{b_{yz}>0~;\\ y \in \x \setminus \ens x}} y^{b_{yz}} + \prod_{\substack{b_{yz}<0~;\\ y \in \x \setminus \ens x}} y^{-b_{yz}}\right)\\
					& = \mu_{z,\Sigma \setminus \ens x}(z).
			\end{align*}
			If $z=x$, we get  
			$$\sigma(\mu_{x,\Sigma}(x)) = \sigma\left( \frac{1}{x} \left(\prod_{\substack{b_{xy}>0\\ y \in \x}} y^{b_{xy}} + \prod_{\substack{b_{xy}<0\\ y \in \x}} y^{-b_{xy}} \right)\right)  = \prod_{\substack{b_{xy}>0\\ y \in \x \setminus \ens{x}}} y^{b_{xy}} + \prod_{\substack{b_{xy}<0\\ y \in \x \setminus \ens{x}}} y^{-b_{xy}}.$$
			It easily follows that $\sigma(\mathcal L(\Sigma)) \subset \mathcal L(\Sigma \setminus \ens x)$ and $\sigma(\mathcal U(\Sigma)) \subset \mathcal U(\Sigma \setminus \ens x)$.
		\end{proof}

	\subsection{An example of multiple specialisation in zero}\label{ssection:Gr}
		For any $m \geq 4$, we denote by $\Gr_2(m)$ the set of planes in $\C^m$ and let $\C[\Gr_2(m)]$ denote its ring of homogeneous coordinates. It is known that 
		$$\C[\Gr_2(m)] \simeq \mathcal A(\Sigma_m) \otimes_{\Z} \C$$
		where $\Sigma_m$ is the seed constructed in \S \ref{ssection:typeA}, see for instance\cite{Scott:Gr} or \cite[Chapter 2]{GSV:book}. The cluster variables in $\mathcal A(\Pi_m)$ are identified with the Plücker coordinates $x_{k,l}$, with $1 \leq k < l \leq m$ in $\C[\Gr_2(m)]$ in such a way that the Plücker coordinate $x_{k,l}$ corresponds to the arc joining $k$ to $l$ in $\Pi_m$, see [loc. cit.].

		Let $m,m'$ be integers such that $4 \leq m' \leq m$. The choice of an inclusion of $\C^{m'}$ into $\C^{m}$ induces an embedding $\iota:\Gr_2(m') \fl \Gr_2(m)$ and thus an epimorphism of $\C$-algebras~:
		$$\iota^*:\C[\Gr_2(m)] \fl \C[\Gr_2(m')].$$

		If $x'_{k,l}$, with $1 \leq k < l \leq m'$, are the Plücker coordinates on $\C[\Gr_2(m')]$, then the morphism $\iota^*$ is given by
		$$\iota^*(x_{k,l}) = \left\{\begin{array}{ll}
			x'_{k,l} & \text{ if } l \leq m' \\
			0	& \text{ if } l > m'
		\end{array}\right.$$
		for $1 \leq k <l \leq m$.

		\begin{prop}
			There exists a unique rooted cluster morphism $\pi_{m,m'}:\mathcal A(\Sigma_m) \fl \mathcal A(\Sigma_{m'})$ such that $\iota^* = \pi \otimes_{\Z} \1_{\C}$.
		\end{prop}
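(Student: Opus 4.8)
The plan is to determine $\pi_{m,m'}$ by a uniqueness argument and then to check that the forced candidate satisfies the conditions of Definition \ref{defi:main}. For uniqueness, note that $\mathcal A(\Sigma_m)$ is a subring of the field $\mathcal F_{\Sigma_m}$, hence torsion-free over $\Z$ and therefore $\Z$-flat, so the canonical map $\mathcal A(\Sigma_m)\fl\mathcal A(\Sigma_m)\otimes_\Z\C\simeq\C[\Gr_2(m)]$ is injective. Consequently any rooted cluster morphism $\pi$ with $\pi\otimes_\Z\1_\C=\iota^*$ must coincide with $\iota^*$ on $\mathcal A(\Sigma_m)$; this forces $\pi_{m,m'}:=\iota^*|_{\mathcal A(\Sigma_m)}$, and it remains to verify that this restriction is a rooted cluster morphism.

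First I would check that $\iota^*$ maps $\mathcal A(\Sigma_m)$ into $\mathcal A(\Sigma_{m'})$. Since a rooted cluster algebra is generated over $\Z$ by its cluster variables, it is enough to observe that $\iota^*$ sends each cluster variable of $\mathcal A(\Sigma_m)$, i.e.\ each Pl\"ucker coordinate $x_{k,l}$ with $1\le k<l\le m$, either to a cluster variable $x'_{k,l}$ of $\mathcal A(\Sigma_{m'})$ or to $0\in\Z\subset\mathcal A(\Sigma_{m'})$. Thus $\pi_{m,m'}\colon\mathcal A(\Sigma_m)\fl\mathcal A(\Sigma_{m'})$ is a well-defined ring homomorphism and automatically $\pi_{m,m'}\otimes_\Z\1_\C=\iota^*$.

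Next I would verify \AM 1, \AM 2 and \AM 3. The first two are a short combinatorial check based on the nested fan triangulations $T_{m'}\subset T_m$: the initial cluster $\x_m$ of $\Sigma_m$ consists of the coordinates $x_\gamma$ indexed by the arcs $\gamma$ of $T_m$, each of which is sent by $\iota^*$ to $0$ or to the coordinate $x'_\gamma$ of the corresponding arc of $T_{m'}$, and arcs that survive and stay internal do so on both sides; hence $\iota^*(\x_m)\subseteq\x_{m'}\sqcup\Z$ and $\iota^*(\ex_m)\subseteq\ex_{m'}\sqcup\Z$. For \AM 3 I would use the fact that every exchange relation of $\mathcal A(\Pi_m)$ and of $\mathcal A(\Pi_{m'})$ is a Pl\"ucker relation $x_{ij}x_{kl}=x_{ik}x_{jl}+x_{il}x_{kj}$, and that applying $\iota^*$ coefficientwise carries such a relation in $\C[\Gr_2(m)]$ to the corresponding relation in $\C[\Gr_2(m')]$. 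Then, for a $(\pi_{m,m'},\Sigma_m,\Sigma_{m'})$-biadmissible sequence $(x_1,\dots,x_n)$, I would induct on $n$: each step is the flip of a diagonal governed by one Pl\"ucker relation, biadmissibility forces $\pi_{m,m'}(x_i)$ to be an honest exchangeable variable of the current seed of $\mathcal A(\Pi_{m'})$ so that the image relation is a genuine (non-degenerate) flip relation there, and therefore $\pi_{m,m'}$ intertwines $\mu_{x_i}$ with $\mu_{\pi_{m,m'}(x_i)}$.

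The hard part will be \AM 3: one must make sure that a biadmissible sequence on the $\Sigma_m$-side really maps under $\iota^*$ to an admissible sequence of flips in the sub-polygon $\Pi_{m'}$, and that none of the Pl\"ucker relations invoked along the way degenerates in a way that breaks an exchange relation --- which is exactly the situation the biadmissibility hypothesis is designed to exclude. Geometrically this amounts to the statement that flipping a diagonal of $\Pi_m$ which lies in $\Pi_{m'}$, away from the deleted marked points, is compatible with flipping that same diagonal inside $\Pi_{m'}$; the bookkeeping is routine, but this is where the substance of the proposition lies.
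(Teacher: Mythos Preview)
Your proposal is correct and follows essentially the same approach as the paper: define $\pi_{m,m'}$ by restricting $\iota^*$ to the Pl\"ucker coordinates, check that cluster variables land in $\mathcal A(\Sigma_{m'})\sqcup\Z$ so \AM1 and \AM2 hold, and verify \AM3 by noting that all exchange relations in both algebras are Pl\"ucker relations, which $\iota^*$ visibly preserves. Your uniqueness argument via torsion-freeness is a slightly more explicit version of the paper's one-line observation that any such $\pi$ must act as $\iota^*$ on the Pl\"ucker coordinates; and your inductive discussion of flips along biadmissible sequences unpacks what the paper compresses into the phrase ``$\pi$ commutes with mutations along biadmissible sequences.''
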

		\begin{proof}
			Consider the ring homomorphism $\pi_{m,m'}:\mathcal F_{\Sigma_m} \fl \mathcal F_{\Sigma_{m'}}$ acting as $\iota^*$ on the Plücker coordinates. Then $\pi$ defines a ring homomorphisms from $\mathcal A(\Sigma_m)$ to $\mathcal A(\Sigma_{m'})$. Moreover, $\pi_{m,m'}$ satisfies \AM 1 and \AM 2 by construction.

			Exchange relations in $\mathcal A(\Sigma_m)$ are given by
			$$x_{ij}x_{kl} = x_{ik}x_{jl} + x_{il}x_{jk}$$
			for any $i,j,k,l$ such that $1 \leq i<k<j<l \leq m$ and similarly for $\mathcal A(\Sigma_{m'})$. Thus, $\pi$ sends exactly the exchange relations involving only $x_{k,l}$ with $1 \leq i<k<j<l \leq m'$ in $\mathcal A(\Sigma_m)$ to the same exchange relations for $x'_{k,l}$ with $1 \leq i<k<j<l \leq m'$ in $\mathcal A(\Sigma_{m'})$. In other words, $\pi$ commutes with mutations along biadmissible sequences and thus satisfies \AM 3.

			For uniqueness, it is enough to observe that if such a morphism $\pi_{m,m'}$ exists then it necessarily acts as $\iota^*$ on the Plücker coordinates and thus it is unique.
		\end{proof}

\section{Surgery}\label{section:surgery}
	In this section we introduce a combinatorial procedure, called \emph{cutting}, which turns out to be the inverse process of the amalgamated sums considered in Section \ref{ssection:amalgam}. More precisely, these cuttings provide epimorphisms in $\amr$ which are retractions of the monomorphisms constructed from amalgamated sums of rooted cluster algebras.

	\subsection{Cutting along separating families of variables}
		\begin{defi}[Separating families]
			Let $\Sigma=(\x,\ex,B)$ be a seed.  If there exist a subset $\Delta \subset (\x \setminus \ex)$ and a partition $\x= \x_1 \sqcup \x_2 \sqcup \Delta$ such that, with respect to this partition, the exchange matrix $B$ is of the form
			$$B = \left[\begin{array}{c|c|c}
				B^1_{11} & 0 & B^1_{12}\\
				\hline
				0 & B^2_{11} & B^2_{12}\\
				\hline
				B^1_{21} & B^2_{21} & B_{\Delta}
			\end{array}\right],$$
			then we say that \emph{$\Delta$ separates $\x_1$ and $\x_2$ in $\Sigma$}.
		\end{defi}

		For $j \in \ens{1,2}$, we set $d_\Delta^j \Sigma =(\x_j \sqcup \Delta,\ex \cap \x_j, B^j)$ where
		$$B^j = \left[\begin{array}{ll}
			B^j_{11} & B^j_{12} \\
			B^j_{21} & B^j_{22}
		\end{array}\right].$$

		\begin{defi}[Cutting]
			Let $\Sigma = (\x,\ex,B)$ be a seed and let $\Delta$ be a separating family of variables as above. The \emph{cutting of $\Sigma$ along $\Delta$} is the pair
			$$d_{\Delta}\Sigma = (d^1_\Delta\Sigma, d^2_\Delta\Sigma).$$
		\end{defi}

		\begin{exmp}
			Consider for instance the matrix
			$$B = \left[\begin{array}{rr|rr|r}
				0 & 1 & 0 & 0  & -1 \\
				-1 & 0 & 0 & 0 & 1 \\
				\hline
				0 & 0 & 0 & 1 & -1 \\
				0 & 0 & -1 & 0 & 1\\
				\hline
				1 & -1 & 1 & -1 & 0
			\end{array}\right]$$
			corresponding to the quiver
			\begin{center}
				\begin{tikzpicture}[scale = .75]
					\fill (-1,-1) node {$Q_B$};

					\fill (0,0) circle (.1);
					\fill (2,0) circle (.1);

					\draw (1,-1) circle (.1);

					\fill (0,-2) circle (.1);
					\fill (2,-2) circle (.1);

					\draw[->] (.25,0) -- (1.75,0);
					\draw[<-] (.25,-.25) -- (.75,-.75);
					\draw[->] (1.75,-.25) -- (1.25,-.75);

					\draw[->] (.25,-2) -- (1.75,-2);
					\draw[<-] (.25,-1.75) -- (.75,-1.25);
					\draw[->] (1.75,-1.75) -- (1.25,-1.25);

					\fill (0,0) node [left] {1};
					\fill (0,-2) node [left] {3};
					\fill (2,0) node [right] {2};
					\fill (2,-2) node [right] {4};
					\fill (1,-1) node [below] {5};

				\end{tikzpicture}
			\end{center}
			with point 5 frozen. Then cutting along 5 gives two oriented 3-cycles with one frozen point each~:
			\begin{center}
				\begin{tikzpicture}[scale = .75]
					\fill (0,0) circle (.1);
					\fill (2,0) circle (.1);

					\draw (1,-1) circle (.1);

					\draw[->] (.25,0) -- (1.75,0);
					\draw[<-] (.25,-.25) -- (.75,-.75);
					\draw[->] (1.75,-.25) -- (1.25,-.75);

					\fill (0,0) node [left] {1};
					\fill (2,0) node [right] {2};
					\fill (1,-1) node [below] {5};

				\end{tikzpicture}

				\begin{tikzpicture}[scale = .75]
					\draw (1,-2) circle (.1);

					\fill (0,-3) circle (.1);
					\fill (2,-3) circle (.1);

					\draw[->] (.25,-3) -- (1.75,-3);
					\draw[<-] (.25,-2.75) -- (.75,-2.25);
					\draw[->] (1.75,-2.75) -- (1.25,-2.25);

					\fill (0,-3) node [left] {3};
					\fill (2,-3) node [right] {4};
					\fill (1,-2) node [above] {5};
				\end{tikzpicture}
			\end{center}
		\end{exmp}

		The following lemmata prove that the cutting is the inverse operation to the amalgamated sum of seeds.
		\begin{lem}\label{lem:cutglue}
			Let  $\Sigma_1$ and $\Sigma_2$ be seeds which are glueable along $\Delta_1$ and $\Delta_2$ and let $\Sigma = \Sigma_1 \coprod_{\Delta_1,\Delta_2} \Sigma_2$. We denote by $\Delta$ the subset of the cluster of $\Sigma$ corresponding to $\Delta_1$ and $\Delta_2$. Then~:
			\begin{enumerate}
				\item $\Delta$ separates $\x_1 \setminus \Delta$ and $\x_2 \setminus \Delta$ in $\Sigma$~;
				\item $d^i_{\Delta}\Sigma \simeq \Sigma_i$ for any $i \in \ens{1,2}$.
			\end{enumerate}
		\end{lem}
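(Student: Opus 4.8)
The plan is to prove both statements by unwinding the definition of the amalgamated sum $\Sigma_1\coprod_{\Delta_1,\Delta_2}\Sigma_2$ and matching it against the definitions of a separating family and of the cutting. Throughout, write $\Sigma=(\x,\ex,B)$ with $\x=(\x_1\setminus\Delta_1)\sqcup(\x_2\setminus\Delta_2)\sqcup\Delta$ and $\ex=\ex_1\sqcup\ex_2$, and let $\x_j'$ denote the image of $\x_j\setminus\Delta_j$ inside $\x$, so that $\x=\x_1'\sqcup\x_2'\sqcup\Delta$.

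\textbf{Part (1).} First I would note that $\Delta\subset\x\setminus\ex$: the set $\Delta$ is in bijection with $\Delta_1\subset\x_1\setminus\ex_1$ (equivalently with $\Delta_2\subset\x_2\setminus\ex_2$), and $\ex=\ex_1\sqcup\ex_2$ contains no element of $\Delta$. The defining formula for $B=B^1\coprod_{\Delta_1,\Delta_2}B^2$ says precisely that, with respect to the partition $\x=\x_1'\sqcup\x_2'\sqcup\Delta$, the matrix $B$ has the block shape appearing in the definition of a separating family, the off-diagonal blocks between $\x_1'$ and $\x_2'$ being zero. Hence $\Delta$ separates $\x_1'$ and $\x_2'$ in $\Sigma$, which is the first assertion.

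\textbf{Part (2).} Fix $i\in\ens{1,2}$. By the block shape exhibited in Part (1) and the definition of the cutting, $d^i_\Delta\Sigma$ has cluster $\x_i'\sqcup\Delta$, exchangeable set $\ex\cap(\x_i'\sqcup\Delta)$, and exchange matrix the principal submatrix of $B$ indexed by $\x_i'\sqcup\Delta$; by the formula defining $B^1\coprod_{\Delta_1,\Delta_2}B^2$ this submatrix has diagonal blocks $B^i_{11}$ on $\x_i'$ and $B_\Delta$ on $\Delta$, and off-diagonal blocks $B^i_{12}$ and $B^i_{21}$, that is, it is exactly the block decomposition of the exchange matrix of $\Sigma_i$ relative to the partition $\x_i=(\x_i\setminus\Delta_i)\sqcup\Delta_i$. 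I would then check that the bijection $\phi_i\colon\x_i'\sqcup\Delta\to\x_i$ which is the identity on $\x_i\setminus\Delta_i$ and the canonical identification on $\Delta\xrightarrow{\sim}\Delta_i$ realises an isomorphism of seeds $d^i_\Delta\Sigma\simeq\Sigma_i$: on exchangeable variables one has $\ex\cap(\x_i'\sqcup\Delta)=(\ex_1\sqcup\ex_2)\cap(\x_i'\sqcup\Delta)=\ex_i$, because the sets $\ex_j$ are disjoint in $\x$, because $\ex_i\cap\Delta_i=\emptyset$, and because $\ex_j\cap(\x_i\setminus\Delta_i)=\emptyset$ for $j\neq i$; on the exchange matrix, $\phi_i$ transports the submatrix above to $B^i$ precisely because $B_\Delta$ is, by the glueability hypothesis, identified with the $\Delta_i\times\Delta_i$ submatrix of $B^i$ through the seed isomorphism $\Sigma_{1|\Delta_1}\simeq\Sigma_{2|\Delta_2}$.

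The argument is essentially bookkeeping, with no substantial obstacle. The only point that deserves genuine attention is the web of identifications surrounding $\Delta$ — each element of $\Delta$ must simultaneously be matched with its image in $\Delta_1$ and in $\Delta_2$ via the glueing isomorphism — together with the verification that restricting $\ex=\ex_1\sqcup\ex_2$ back to $\x_i'\sqcup\Delta$ recovers exactly $\ex_i$ and nothing more. Both reduce to the standing hypothesis that $\Delta_1$ and $\Delta_2$ consist of frozen variables, which is exactly why glueability was only allowed along frozen variables.
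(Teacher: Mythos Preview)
Your proposal is correct and follows essentially the same approach as the paper's proof: both arguments simply unwind the definition of the amalgamated sum and match the resulting block structure of $(\x,\ex,B)$ against the definitions of a separating family and of the cutting. You are somewhat more explicit than the paper about the identifications involving $\Delta$, $\Delta_1$, $\Delta_2$ and about why $\ex\cap(\x_i'\sqcup\Delta)$ recovers exactly $\ex_i$, but this is elaboration rather than a different route.
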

		\begin{proof}
			By definition, we have $\Sigma = \Sigma_1 \coprod_{\Delta_1,\Delta_2} \Sigma_2 = (\x,\ex,B)$ where $\x= (\x_1 \setminus \Delta_1) \sqcup (\x_2 \setminus \Delta_2) \sqcup \Delta$, $\ex = \ex_1 \sqcup \ex_2$ and
			$$B = \left[\begin{array}{c|c|c}
				B^1_{11} & 0 & B^1_{12}\\
				\hline
				0 & B^2_{11} & B^2_{12}\\
				\hline
				B^1_{21} & B^2_{21} & B_{\Delta}
			\end{array}\right].$$
			Therefore, for any $i=1,2$, the cluster of $d_\Delta^i \Sigma$ is $\x_i \setminus \Delta_i \sqcup \Delta$, the exchangeable variables in this cluster are the exchangeable variables in $\Sigma_i$ (none of them belongs to $\Delta$ by assumption) and the exchange matrix of this seed is 
			$$B^i = \left[\begin{array}{c|c}
				B^i_{11} & B^1_{12}\\
				\hline
				B^i_{21} & B_{\Delta}
			\end{array}\right].$$
			Therefore, $d_\Delta^i \simeq \Sigma_i$.
		\end{proof}

		Conversely~:
		\begin{lem}\label{lem:gluecut}
			Let $\Sigma$ be a seed and $\Delta$ a separating family of variables in $\Sigma$. Then $d^1_\Delta \Sigma$ and $d^2_\Delta \Sigma$ are glueable along the respective images $\Delta_1$ and $\Delta_2$ of $\Delta$ and
			$$d^1_{\Delta} \Sigma \coprod_{\Delta_1,\Delta_2} d^2_{\Delta} \Sigma \simeq \Sigma.$$
		\end{lem}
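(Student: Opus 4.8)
The plan is to prove both assertions by directly unwinding the definitions of cutting and of amalgamated sum of seeds, thereby reversing the computation of Lemma \ref{lem:cutglue}. Write $\Sigma=(\x,\ex,B)$ and let $\x=\x_1\sqcup\x_2\sqcup\Delta$ be the partition witnessing that $\Delta$ separates $\x_1$ and $\x_2$, so that
$$B=\left[\begin{array}{c|c|c} B^1_{11} & 0 & B^1_{12}\\ \hline 0 & B^2_{11} & B^2_{12}\\ \hline B^1_{21} & B^2_{21} & B_\Delta\end{array}\right].$$
By definition $\Sigma_j:=d^j_\Delta\Sigma=(\x_j\sqcup\Delta,\ex\cap\x_j,B^j)$ with $B^j=B[\x_j\sqcup\Delta]=\left[\begin{smallmatrix} B^j_{11}&B^j_{12}\\ B^j_{21}&B_\Delta\end{smallmatrix}\right]$, the block decomposition being with respect to the partition $\x_j\sqcup\Delta$ of its cluster.

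First I would verify glueability of $\Sigma_1$ and $\Sigma_2$ along the respective images $\Delta_1\subset\x_1\sqcup\Delta$ and $\Delta_2\subset\x_2\sqcup\Delta$ of $\Delta$. The hypothesis $\Delta\subset\x\setminus\ex$ is exactly what ensures that in each $\Sigma_j$ the variables of $\Delta_j$ are frozen, i.e.\ $\Delta_j\subset(\x_j\sqcup\Delta)\setminus(\ex\cap\x_j)$. Moreover the full subseed $(\Sigma_1)_{|\Delta_1}$ is $(\Delta,\emptyset,B^1[\Delta])=(\Delta,\emptyset,B_\Delta)$ and likewise $(\Sigma_2)_{|\Delta_2}=(\Delta,\emptyset,B_\Delta)$, so the identity of $\Delta$ (composed with the canonical bijections $\Delta_1\simeq\Delta\simeq\Delta_2$) is an isomorphism of seeds $(\Sigma_1)_{|\Delta_1}\simeq(\Sigma_2)_{|\Delta_2}$; this is precisely the glueability condition.

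Then I would compute $\Sigma_1\coprod_{\Delta_1,\Delta_2}\Sigma_2=(\x',\ex',B')$ and exhibit its isomorphism with $\Sigma$. Choosing the fresh family of indeterminates used to form the amalgamated sum to be the original $\Delta\subset\x$, one gets $\x'=((\x_1\sqcup\Delta)\setminus\Delta_1)\sqcup((\x_2\sqcup\Delta)\setminus\Delta_2)\sqcup\Delta=\x_1\sqcup\x_2\sqcup\Delta=\x$; since neither $\Delta_1$ nor $\Delta_2$ meets the exchangeable variables, $\ex'=(\ex\cap\x_1)\sqcup(\ex\cap\x_2)=\ex$; and substituting the block forms of $B^1$ and $B^2$ into the amalgamated-sum formula for the exchange matrix reproduces exactly the displayed block form of $B$, so $B'=B$. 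Hence the identification of clusters above is an isomorphism of seeds $\Sigma_1\coprod_{\Delta_1,\Delta_2}\Sigma_2\simeq\Sigma$, which together with $\Sigma_j=d^j_\Delta\Sigma$ proves the lemma.

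I do not expect a genuine obstacle here, since the statement is the formal inverse of Lemma \ref{lem:cutglue} and reduces to matching block decompositions of exchange matrices. The only point requiring care is bookkeeping the three copies of $\Delta$ involved — the copy $\Delta\subset\x$, the copies $\Delta_1,\Delta_2$ sitting inside the two pieces $d^1_\Delta\Sigma$ and $d^2_\Delta\Sigma$, and the fresh copy used in forming the amalgamated sum — together with the observation, already noted above, that it is the frozenness hypothesis $\Delta\subset\x\setminus\ex$ which makes those pieces glueable along these copies in the first place.
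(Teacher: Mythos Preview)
Your proposal is correct and follows essentially the same approach as the paper's own proof: both arguments unwind the definitions of cutting and amalgamated sum, match the block decompositions of the exchange matrices, and use the frozenness of $\Delta$ to recover $\ex$. Your write-up is in fact slightly more explicit than the paper's---you verify the glueability condition by exhibiting the isomorphism $(\Sigma_1)_{|\Delta_1}\simeq(\Delta,\emptyset,B_\Delta)\simeq(\Sigma_2)_{|\Delta_2}$, whereas the paper simply asserts glueability---but the underlying argument is identical.
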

		\begin{proof}
			We write $\Sigma=(\x,\ex,B)$. Since $\Delta$ is separating in $\Sigma$, there exists a partition $\x = \x_1 \sqcup \x_2 \sqcup \Delta$ such that, adapted to this partition, $B$ is given by
			$$B = \left[\begin{array}{c|c|c}
				B^1_{11} & 0 & B^1_{12}\\
				\hline
				0 & B^2_{11} & B^2_{12}\\
				\hline
				B^1_{21} & B^2_{21} & B_{\Delta}
			\end{array}\right]$$
			and since $\Delta$ consists of frozen variables, $\ex = (\ex \cap \x_1) \sqcup (\ex \cap \x_2)$.

			By definition, for $i=1,2$, we have $d^i_\Delta \Sigma = \Sigma_i$ where
			$$\Sigma_i = (\x_i \sqcup \Delta, \ex \cap \x_i, B^i)$$
			with
			$$B^i = \left[\begin{array}{c|c}
				B^i_{11} & B^1_{12}\\
				\hline
				B^i_{21} & B_{\Delta}
			\end{array}\right].$$
			It follows that $\Sigma_1$ and $\Sigma_2$ are glueable along $\Delta,\Delta$ and thus
			$$\Sigma_1 \coprod_{\Delta,\Delta} \Sigma_2 \simeq (\x, \ex, B) = \Sigma.$$
		\end{proof}

	\subsection{Epimorphisms from cuttings}
		Let $\Sigma$ be a seed and $\Delta$ a separating family of variables in $\Sigma$ as above and set $\Sigma_i = d^i_\Delta \Sigma$ for any $i \in \ens{1,2}$. 

		Fix $i \in \ens{1,2}$. It follows from Lemmata \ref{lem:cutglue}, \ref{lem:gluecut} and \ref{lem:injcoprod} that we have canonical monomorphisms in $\amr$~:
		$$j_i: \mathcal A(\Sigma_i) \fl \mathcal A(\Sigma).$$
		Consider the ring homomorphism~:
		$$\pr_i:\left\{\begin{array}{rcl}
			\mathcal F_{\Sigma} & \fl & \mathcal F_{\Sigma_i}\\
			x & \mapsto & x \text{ if } x \in \x_i \sqcup \Delta,\\
			x & \mapsto & 0 \text{ if } x \in \x_j \text{ for } i\neq j.
		\end{array}\right.$$

		\begin{prop}
			For any $i \in \ens{1,2}$, the ring homomorphism $\pr_i$ induces a rooted cluster epimorphism $\mathcal A(\Sigma) \fl \mathcal A(\Sigma_i)$ which is a retraction for $j_i$.
		\end{prop}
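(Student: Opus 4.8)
The plan is to check that $\pr_i$ is a rooted cluster morphism $\mathcal A(\Sigma)\fl\mathcal A(\Sigma_i)$ and then to read off the remaining assertions almost for free. Write $\Sigma=(\x,\ex,B)$ with separating partition $\x=\x_1\sqcup\x_2\sqcup\Delta$, fix $i\in\ens{1,2}$ and let $j$ denote the other index. By definition $\Sigma_i=d^i_\Delta\Sigma$ has cluster $\x_i\sqcup\Delta$, exchangeable variables $\ex\cap\x_i$ and exchange matrix $B[\x_i\sqcup\Delta]$, so, by Lemma \ref{lem:gluecut}, it is the full subseed $\Sigma_{|\x_i\sqcup\Delta}$ of $\Sigma$. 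The argument then runs parallel to that of Proposition \ref{prop:res}, the only change being that the variables of $\x_j$ are collapsed to $0$ instead of to $1$.

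First I would dispose of the easy points. Conditions \AM1 and \AM2 are immediate from the definition of $\pr_i$ and from the fact that $\Delta\subset\x\setminus\ex$. Next I would pin down the biadmissible sequences: since $0$ is never exchangeable, a $(\pr_i,\Sigma,\Sigma_i)$-biadmissible sequence can only use exchangeable variables lying on the $\x_i$-side; moreover the vanishing of the blocks of $B$ relating $\x_1$ and $\x_2$ is preserved under mutation in an $\x_i$-direction, because $b'_{zw}\neq b_{zw}$ forces both $b_{zy}$ and $b_{yw}$ nonzero, hence (for $y$ on the $\x_i$-side) $z,w\in\x_i\sqcup\Delta$. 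Inductively, the $(\pr_i,\Sigma,\Sigma_i)$-biadmissible sequences are precisely the $\Sigma_i$-admissible sequences read inside $\Sigma$.

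The substance is \AM3 together with well-definedness. For $y$ exchangeable in a seed obtained from $\Sigma$ by mutating along an $\x_i$-side admissible sequence, the block shape of the mutated exchange matrix forces the exchange relation $\mu_y(y)$ to involve only variables of $\x_i\sqcup\Delta$; combined with the identity $\mu_y(B)[J]=\mu_y(B[J])$ used in Proposition \ref{prop:res}, this relation coincides with the one computed in $\Sigma_i$, so $\pr_i$ fixes it and $\pr_i(\mu_y(w))=\mu_{\pr_i(y)}(\pr_i(w))$ for every cluster variable $w$ (the cases $w=y$, $w\in(\x_i\sqcup\Delta)\setminus\ens{y}$ and $w\in\x_j$ being each immediate), which is \AM3; since $\pr_i(\Sigma)=\Sigma_i$, Lemma \ref{lem:image} then yields idealness once $\pr_i$ is known to be a rooted cluster morphism. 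The step I expect to be the main obstacle is well-definedness itself, namely that $\pr_i$ genuinely restricts to a ring homomorphism $\mathcal A(\Sigma)\fl\mathcal A(\Sigma_i)$. By the compatibility of cuttings with amalgamated sums (Lemmata \ref{lem:cutglue} and \ref{lem:gluecut}) and the structure result $\mathcal A(\Sigma)\simeq\mathcal A(\Sigma_1)\otimes_{\Z[\Delta]}\mathcal A(\Sigma_2)$ following Lemma \ref{lem:injcoprod}, every cluster variable of $\mathcal A(\Sigma)$ is either a cluster variable of $\Sigma_i$, which $\pr_i$ fixes, or one coming from the $\x_j$-side; one must then verify that $\pr_i$ carries the latter into $\mathcal A(\Sigma_i)$, in fact into $\Z[\Delta]$, and this is where the behaviour of $\pr_i$ on the $\x_j$-side has to be analysed with care.

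Finally, the last two claims are formal. Surjectivity: every $\Sigma_i$-admissible sequence lifts trivially to a $(\pr_i,\Sigma,\Sigma_i)$-biadmissible one, so $\mathcal A(\Sigma_i)\subset\pr_i(\mathcal A(\Sigma))$, and as $\amr$ is concrete, $\pr_i$ is an epimorphism. Retraction: the monomorphism $j_i:\mathcal A(\Sigma_i)\fl\mathcal A(\Sigma)$ of Lemma \ref{lem:injcoprod} is induced by the inclusion $\mathcal F_{\Sigma_i}\hookrightarrow\mathcal F_\Sigma$, which is the identity on $\x_i\sqcup\Delta$; hence $\pr_i\circ j_i$ fixes the cluster of $\Sigma_i$, and being a ring endomorphism of $\mathcal A(\Sigma_i)\subset\mathcal F_{\Sigma_i}=\Q(\x_i\sqcup\Delta)$ it must equal $\id_{\mathcal A(\Sigma_i)}$.
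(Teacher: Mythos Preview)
Your overall strategy matches the paper's almost line for line: verify \AM1 and \AM2 from the definition of $\pr_i$, pin down the $(\pr_i,\Sigma,\Sigma_i)$-biadmissible sequences as exactly the $\Sigma_i$-admissible ones via the block structure of $B$ (which persists under such mutations), deduce \AM3, and then read off surjectivity and the retraction identity $\pr_i\circ j_i=\1_{\mathcal A(\Sigma_i)}$. The paper's proof is terser but proceeds in just this way.

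You are right to single out well-definedness of $\pr_i$ on $\mathcal A(\Sigma)$ as ``the main obstacle'', but you do not resolve it, and in fact it cannot be resolved with $\pr_i$ as written. A ring homomorphism on the field $\mathcal F_\Sigma=\Q(\x)$ cannot send a nonzero element to $0$, so the displayed definition of $\pr_i$ is already ill-posed; retreating to the polynomial ring $\Z[\x]$ does not help, because as soon as $\x_j$ contains an exchangeable variable the map fails to extend to $\mathcal A(\Sigma)$. Concretely, take $\Delta=\emptyset$, $\x_1=\ens{x_1}$, $\x_2=\ens{x_2}$, both exchangeable, with the zero exchange matrix: then $2/x_2\in\mathcal A(\Sigma)$, and $x_2\cdot(2/x_2)=2$ forces any ring homomorphism sending $x_2\mapsto 0$ to satisfy $0=2$. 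Your hope that $\pr_i$ carries the $\x_j$-side cluster variables into $\Z[\Delta]$ is therefore unfounded; the paper's own proof passes over exactly this point. A clean repair is to send the $\x_j$-variables to $1$ rather than $0$: then $\pr_i$ coincides with the restriction morphism $\res_{\Sigma,\Sigma_i}$ of Proposition~\ref{prop:res} (since $\Sigma_i=\Sigma_{|\x_i\sqcup\Delta}$ is a full subseed of $\Sigma$), which is already shown there to be a surjective ideal rooted cluster morphism, and the retraction identity survives because no $\x_j$-variable occurs in the image of $j_i$.
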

		\begin{proof}
			We first observe that 
			$$\pr_i(\x) \subset \x_i \sqcup \Delta  \sqcup \ens 0 \text{ and }\pr_i(\ex) \subset (\ex \cap \x_i) \sqcup \ens 0$$
			so that $\pr_i$ satisfies \AM 1 and \AM 2.

			In order to prove that $\pr_i$ satisfies \AM 3, we prove as in Lemma \ref{lem:injcoprod} that the $(\pr_i,\Sigma,\Sigma_i)$-biadmissible sequences are precisely the $\Sigma_i$-admissible sequences and that $\Delta$ remains separating along biadmissible mutations. It follows as in the proof of Lemma \ref{lem:injcoprod} that $\pr_i$ commutes with biadmissible mutations and thus satisfies \AM 3 and induces a surjective rooted cluster morphism $\mathcal A(\Sigma) \fl \mathcal A(\Sigma_i)$. 

			Finally, as $(\pr_i \circ j_i)(x) = x$ for any $x \in \x_i \sqcup \Delta$, we get $\pr_i \circ j_i = \1_{\mathcal A(\Sigma_i)}$.
		\end{proof}

		In general, we state the following problem~:
		\begin{prob}
			Determine which monomorphisms in $\am$ are sections.
		\end{prob}

	\subsection{Topological interpretation of the surgery}\label{ssection:topsurgery}
		Let $(S,M)$ be a marked surface. Assume that there exists a collection $\Delta$ of (internal or boundary) arcs in $(S,M)$ which can be concatenated in order to form a simple closed curve in $(S,M)$ which delimits two non-degenerate marked subsurfaces $(S_1,M_1)$ and $(S_2,M_2)$ in $(S,M)$. 

		Consider a triangulation $T$ of $(S,M)$ containing $\Delta$ as a subset. Then $T$ induces two triangulations $T_1$ and $T_2$ of $(S_1,M_1)$ and $(S_2,M_2)$ respectively in which $\Delta$ corresponds to a set of boundary arcs, that is, to frozen variables. We denote by $\Sigma$ the seed corresponding to the triangulation $T$ and by $\Sigma_1$, $\Sigma_2$ the seeds corresponding respectively to the triangulations $T_1$ and $T_2$. Then $\Delta$ is a separating family of variables in $\Sigma$ and 
		$$d_\Delta(\Sigma) = (\Sigma_1,\Sigma_2).$$
		In other words, cutting $\Sigma$ along $\Delta$ coincides with taking the seed associated with the triangulation of the surface obtained by cutting $(S,M)$ along $\Delta$.

		\begin{exmp}
			Consider the following triangulation of the disc with two marked points on the boundary and five punctures.
		
			\begin{center}
				\begin{tikzpicture}[scale = .5]
					\draw[thick] (0,0) circle (3);

					\draw[dashed,red] (-1,0) -- (0,1) -- (1,0) -- (0,-1) -- cycle;

					\fill (0,3) circle (.1);
					\fill (0,-3) circle (.1);

					\fill (0,1) circle (.1);
					\fill (0,-1) circle (.1);
					\fill (1,0) circle (.1);
					\fill (-1,0) circle (.1);

					\fill (0,0) circle (.1);

					\draw (0,0) -- (0,1);
					\draw (0,0) -- (0,-1);
					\draw (0,0) -- (1,0);
					\draw (0,0) -- (-1,0);

					\draw (0,3) -- (0,1);
					\draw (0,3) .. controls (1,1) and (1,0) .. (1,0);
					\draw (0,3) .. controls (-1,1) and (-1,0) .. (-1,0);

					\draw (0,-3) -- (0,-1);
					\draw (0,-3) .. controls (1,-1) and (1,0) .. (1,0);
					\draw (0,-3) .. controls (-1,-1) and (-1,0) .. (-1,0);
				\end{tikzpicture}
			\end{center}

			Let $\Delta$ denote the union of the four arcs which are dashed in the above figure. Cutting the surface along $\Delta$ gives two new marked surfaces, namely an unpunctured annulus with two marked points on a boundary and four marked points on the other and a disc with four marked points on the boundary and one puncture. The triangulation of the above disc containing $\Delta$ thus induces triangulations of the two cut surfaces.

			\begin{center}
				\begin{tikzpicture}[scale = .5]
					\draw[thick] (0,0) circle (3);

					\draw[thick,dashed,red] (0,0) circle (1);

					\fill (0,3) circle (.1);
					\fill (0,-3) circle (.1);

					\fill (0,1) circle (.1);
					\fill (0,-1) circle (.1);
					\fill (1,0) circle (.1);
					\fill (-1,0) circle (.1);

					\draw (0,3) -- (0,1);
					\draw (0,3) .. controls (1,1) and (1,0) .. (1,0);
					\draw (0,3) .. controls (-1,1) and (-1,0) .. (-1,0);

					\draw (0,-3) -- (0,-1);
					\draw (0,-3) .. controls (1,-1) and (1,0) .. (1,0);
					\draw (0,-3) .. controls (-1,-1) and (-1,0) .. (-1,0);


					\draw[thick,dashed,red] (8,0) circle (1);

					\fill (8,1) circle (.1);
					\fill (8,-1) circle (.1);
					\fill (9,0) circle (.1);
					\fill (7,0) circle (.1);

					\fill (8,0) circle (.1);

					\draw (8,0) -- (8,1);
					\draw (8,0) -- (8,-1);
					\draw (8,0) -- (9,0);
					\draw (8,0) -- (7,0);

				\end{tikzpicture}
			\end{center}

			Then we clearly see that the cluster algebra associated with the initial surface where the arcs in $\Delta$ are frozen is the amalgamated sum over the respective images of $\Delta$ of the two cut surfaces.
		\end{exmp}

\section*{Acknowledgements}
	The first author gratefully acknowledges partial support from NSERC of Canada, FQRNT of Qu\'ebec and the University of Sherbrooke.

	This work was initiated while the second author was a CRM-ISM postdoctoral fellow at the University of Sherbrooke and was finished while he was a postdoctoral fellow at the Universit\'e Denis Diderot -- Paris 7, funded by the \emph{ANR Geom\'etrie Tropicale et Alg\`ebres Amass\'ees}. The second author would like to thank the third author for his kind hospitality during his stay in University of Connecticut where this project started.

	The third author has been supported by the NSF Grant 1001637 and by the University of Connecticut.

	The authors wish to thank Bernhard Keller, Yann Palu, Pierre-Guy Plamondon, Jan Schr\"oer and Vasilisa Shramchenko for fruitful discussions on the topic.

\newcommand{\etalchar}[1]{$^{#1}$}
\providecommand{\bysame}{\leavevmode\hbox to3em{\hrulefill}\thinspace}
\providecommand{\MR}{\relax\ifhmode\unskip\space\fi MR }
\providecommand{\MRhref}[2]{%
  \href{http://www.ams.org/mathscinet-getitem?mr=#1}{#2}
}
\providecommand{\href}[2]{#2}

\end{document}